\def\p{\partial}
\def\ve{\varepsilon}
\def\f{\frac}
\def\la{\lambda}
\def\vp{\varphi}
\def\O{\Omega}
\def\o{\omega}
\def\th{\theta}
\def\G{\Gamma}
\def\vs{\varsigma}
\def\dl{\delta}
\def\ds{\displaystyle}
\def\i{\infty}
\def\no{\nonumber}
\def\beq{\begin{equation}}
\def\eeq{\end{equation}}
\def\ben{\begin{eqnarray}}
\def\een{\end{eqnarray}}
\def\bec{\begin{cases}}
\def\eec{\end{cases}}
\newcommand{\bR}{{\mathbb R}}
\newcommand{\vG}{{\mathcal G}}
\begin{document}
\newtheorem{theorem}{Theorem}
\newtheorem{lemma}{Lemma}
\renewcommand{\thelemma}
{\arabic{section}.\arabic{lemma}}
\newtheorem{corollary}[lemma]{Corollary}
\newtheorem{remark}{Remark}
\renewcommand{\theremark}
{\arabic{section}.\arabic{remark}}
\renewcommand{\theequation}
{\arabic{section}.\arabic{equation}}
\renewcommand{\thetheorem}
{\arabic{section}.\arabic{theorem}}
\makeatletter
\@addtoreset{equation}{section} \makeatother \makeatletter
\makeatletter
\@addtoreset{lemma}{section} \makeatother \makeatletter
\makeatletter
\@addtoreset{remark}{section} \makeatother \makeatletter
\makeatletter \@addtoreset{theorem}{section} \makeatother
\makeatletter
\makeatother
\title{\bf {The shock formation and optimal regularities of the resulting shock curves for 1-D scalar conservation laws
}}
\author{Yin Huicheng$^{1,*}$, \quad Zhu Lu$^{2,}$
\footnote{Yin Huicheng (huicheng@nju.edu.cn, 05407@njnu.edu.cn)
and Zhu Lu (zhulu@hhu.edu.cn) are
supported by the NSFC (No.11571177, No.11731007, No.12001162).}\vspace{0.5cm}\\
1.  School of Mathematical Sciences and Institute of Mathematical Sciences,\\
Nanjing Normal University, Nanjing, 210023, China.
\\
\vspace{0.5cm}
2. College of Science, Hohai University, Nanjing, 210098, China.}
\date{}
\maketitle

\begin{abstract}
The study on the shock formation and the regularities of the resulting shock surfaces
for hyperbolic conservation laws is a basic problem in the nonlinear partial differential equations.
In this paper, we are concerned with the shock formation and the optimal regularities of the resulting shock curves
for the 1-D conservation law $\p_tu+\p_xf(u)=0$ with the smooth initial data $u(0,x)=u_0(x)$.
If $u_0(x)\in C^{1}(\Bbb R)$ and $f(u)\in C^2(\Bbb R)$,
it is well-known that the solution $u$ will blow up on the time $T^*=-\f{1}{\min{g'(x)}}$
when $\min{g'(x)}<0$ holds for $g(x)=f'(u_0(x))$. Let $x_0$ be a local minimum point of $g'(x)$ such that $g'(x_0)=\min{g'(x)}<0$ and
$g''(x_0)=0$, $g^{(3)}(x_0)>0$ (which is called the generic nondegenerate condition), then by Theorem 2 of \cite{Le94},
a weak entropy  solution $u$ together with the shock curve $x=\vp(t)\in C^2[T^*, T^*+\ve)$ starting from
the blowup point $(T^*, x^*=x_0+g(x_0)T^*)$ can be locally constructed. When the generic nondegenerate condition is violated,
namely, when $x_0$ is a local minimum point of $g'(x)$  such that $g''(x_0)=g^{(3)}(x_0)=...=g^{(2k_0)}(x_0)=0$ but $g^{(2k_0+1)}(x_0)>0$
for some $k_0\in\Bbb N$ with $k_0\ge 2$; or $g^{(k)}(x_0)=0$ for any $k\in\Bbb N$ and $k\ge 2$, we will
study the shock formation and the optimal regularity of the shock curve $x=\vp(t)$, meanwhile,
some precise descriptions on the behaviors of $u$ near the  blowup point $(T^*, x^*)$
are given. Our main aims are to show that: around the blowup point, the shock really appears whether the initial data are
degenerate with finite orders or with infinite orders;  the optimal regularities
of the shock solution and the resulting shock curve  have the explicit relations with the degenerate degrees of the initial data.

\end{abstract}

\begin{quote} {\bf Keywords:} {Shock formation}, shock curve, entropy condition, Rankine-Hugoniot condition, hyperbolic conservation law
\end{quote}
\vskip 0.2 true cm

\begin{quote} {\bf Mathematical Subject Classification 2000:} 35L05, 35L72 \end{quote}

\section{Main result}

The study on the blowup and shock formation of smooth solutions to the
hyperbolic conservation laws is a basic problem in the nonlinear partial differential equations,
which has made much progress for the multi-dimensional cases in recent years (see \cite{B-1}-\cite{B-2},
\cite{C2}-\cite{0-Speck}, \cite{LS}-\cite{S2}). In the present paper, we are concerned with the
shock formation and the optimal regularities of the resulting shock curves
for the 1-D conservation law
\begin{equation}\label{0-1}
\left\{
\begin{aligned}
&\p_tu+\p_xf(u)=0,\ (t,x)\in\bR_+\times\bR,\\
&u(0,x)=u_0(x),\ x\in\bR,
\end{aligned}
\right.
\end{equation}
where $f(u)\in C^2(\Bbb R)$ and $u_0(x)\in C^1(\Bbb R)$.
It is well-known that the $C^1$ solution $u$ of \eqref{0-1} will blow up at the time $T^*=-\f{1}{\min{g'(x)}}$
with $g(x)=f'(u_0(x))$ and $\min_{x\in\Bbb R}{g'(x)}<0$. If we further assume $g(x)\in L^{\infty}(\Bbb R)\cap C^p(\Bbb R)$
with $p\ge 4$, and let $x_0$ be a local minimum point of $g'(x)$ such that
\begin{align}\label{0-2}
g'(x_0)=\min_{x\in\bR}{g'(x)}<0,\quad g''(x_0)=0, \quad g^{(3)}(x_0)>0,
\end{align}
which is called the generic nondegenerate condition in \cite{A1}, then by Theorem 2 of \cite{Le94},
a weak entropy  solution $u$  of \eqref{0-1} together with the shock curve $x=\vp(t)$ starting from
the blowup point $(T^*, x^*=x_0+g(x_0)T^*)$ can be locally obtained as follows:

(i)\begin{align}\label{0-3}
&\vp(t)\in C^p(T^*, T^*+\ve)\cap C^{\f{p}{2}}[T^*, T^*+\ve).
\end{align}

(ii) In some part of the neighbourhood of $(T^*,x^*)$ near $x=\vp(t)$, for $t\ge T^*$ and $x\not=\vp(t)$,
\begin{equation}\label{0-4}
\left\{
\begin{aligned}
&|u(t,x)-u(T^*,x^*)|\le C((t-T^*)^3+(x-x^*)^2)^{\f16},\\
&|\p_tu(t,x)|\le\f{C}{((t-T^*)^3+(x-x^*)^2)^{\f16}},\\
&|\p_xu(t,x)|\le\f{C}{((t-T^*)^3+(x-x^*)^2)^{\f13}},\\
&|\p_x^2u(t,x)|\le\f{C}{((t-T^*)^3+(x-x^*)^2)^{\f56}}.
\end{aligned}
\right.
\end{equation}

When the generic nondegenerate condition \eqref{0-2} is violated,
namely, if $x_0$ is a local minimum point of $g'(x)$ such that
\begin{equation}\label{0-5}
\left\{
\begin{aligned}
&g(x)\in L^{\infty}(\Bbb R)\cap C^{2k+2}(\Bbb R)\quad\text{for
$k\in\Bbb N$ with $k\ge 2$,}\\
&g'(x_0)=\min_{x\in\bR}{g'(x)}<0,\quad g''(x_0)=g^{(3)}(x_0)=...=g^{(2k)}(x_0)=0,\quad g^{(2k+1)}(x_0)>0,
\end{aligned}
\right.
\end{equation}
or
\begin{equation}\label{0-6}
\left\{
\begin{aligned}
&g(x)\in L^\infty(\Bbb R)\cap C^{\infty}(\Bbb R),\\
&g'(x_0)=\min_{x\in\bR}{g'(x)}<0,\quad g^{(k)}(x_0)=0\quad \text{for any $k\in\Bbb N$ and $k\ge 2$},
\end{aligned}
\right.
\end{equation}
we will
study the shock formation and the optimal regularity of the resulting shock $x=\vp(t)$ from the blowup point $(T^*, x^*)$,
meanwhile, some precise descriptions on the behaviors of  the solution  $u$ around the blowup point $(T^*, x^*)$
(rather than only some part near the shock curve) will be given.

Without loss of generality and for convenience, we set $x_0=0$ in \eqref{0-5} and \eqref{0-6}. In addition, under condition \eqref{0-5}, near $x_0=0$ we assume
\begin{align}\label{0-7}
&g(x)=-x+x^{2k+1}+r(x),
\end{align}
where $r(x)\in C^{2k+2}$ satisfies that $r^{(j)}(x)=O(x^{2k-j+2})$ for $0\le j\le 2k+2$;
under condition \eqref{0-6}, we choose a class of initial data
\begin{align}\label{0-8}
{g(x)=-x+e^{-|x|^{-p}}\left(\f{x}{p}+r_0(x)\right),}
\end{align}
where $p>0$ and $r_0(x)\in C^\i\cap L^\i$ with
\beq\label{0-9}
{r^{(j)}_0(x)=\left\{
                           \begin{array}{ll}
                             O(x^{2-j}), & j=0,1,2, \\
                             O(1), & j\ge 3
                           \end{array}
                         \right.
}
\eeq
for $x$ near $0$.

Starting from the blowup point $(1,0)$ of \eqref{0-1}, let the formed shock curve  $\G$ be denoted by $x=\vp(t)$
if the shock really appears.
On the left hand side and right hand side of $\G$ for $t\ge 1$, the weak entropy solution  $u$ is represented
by $u_-$ and $u_+$ respectively (see Figure 1 below).
\begin{figure}[h]
\centering
\includegraphics[scale=0.8]{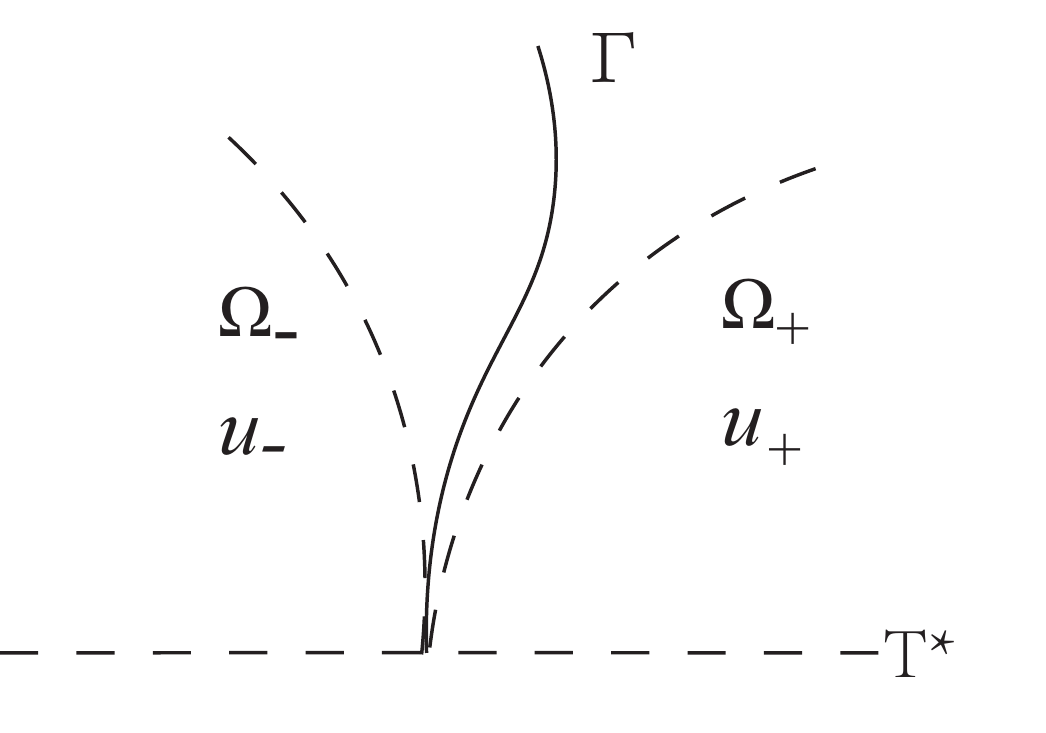}
\caption{\bf Shock formation}
\end{figure}

It follows from the Rankine-Hugoniot condition and entropy condition
on $\G$ that
\begin{align}\label{Rankine-Hugoniot Condition}
&\vp'(t)[u](t,\vp(t))=[f(u)](t,\vp(t)),
\end{align}
where $[u](t,\vp(t))=u_+(t,\vp(t))-u_-(t,\vp(t))$ is the jump of $u$ across $\G$, and

\begin{align}\label{entropy condition}
f'(u_+(t,\vp(t)))<\vp'(t)<f'(u_-(t,\vp(t))).
\end{align}
Our main results are
\begin{theorem}\label{main result1}
Under assumption \eqref{0-7}, there exists a unique solution  $u\in C^1((0,1)\times\bR)\cap C([0,1]\times\bR)$ to
problem \eqref{0-1} together with \eqref{Rankine-Hugoniot Condition}-\eqref{entropy condition} for $t\ge 1$.
Furthermore,

(1) $\vp(t)\in C^\f{k+1}{k}[1,1+\ve)$ and
$u\in C^1((1,1+\ve)\times\bR)\setminus \{x=\vp(t)\})$ for some $\ve>0$.

(2) near the blowup point $(1,0)$, the behaviors of $u$ and its derivatives are as follows
\ben
\label{1.1}|u(t,x)-u(1,0)|&=&O(|t-1|^\f{1}{2k}+|x|^{\f{1}{2k+1}}),\\
\label{1.2}|\p_t u(t,x)|&=&O((|t-1|^{\f{1}{2k}}+|x|^{\f{1}{2k+1}})^{-(2k-1)}),\\
\label{1.3}|\p_x u(t,x)|&=&O((|t-1|^{\f{1}{2k}}+|x|^{\f{1}{2k+1}})^{-2k}).
\een
\end{theorem}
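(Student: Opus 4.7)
The plan is to combine the method of characteristics on the pre-shock region with a Rankine--Hugoniot analysis after blow-up, the latter carried out in a rescaled characteristic variable tuned to the degeneracy order $2k$ in \eqref{0-7}. On $[0,1)\times\bR$ the map $H(y,t) := y + g(y)t$ satisfies $H_y = 1 + g'(y)t > 0$, so it is a $C^1$-diffeomorphism in $y$ and the classical solution is $u(t,x) = u_0(H^{-1}(x,t))$; at $t = 1$ one has $H(y,1) = y^{2k+1} + r(y)$, still strictly monotone in $y$ near the blow-up point $y=0$ under \eqref{0-7}, so $u$ extends continuously to $[0,1]\times\bR$.

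For $\tau := t - 1 > 0$ the characteristic equation $x = H(y,1+\tau)$ reads $x = -y\tau + y^{2k+1}(1+\tau) + r(y)(1+\tau)$. The scaling $y = \tau^{1/(2k)}\eta$, $x = \tau^{1+1/(2k)} X$ reduces it to
\begin{align*}
X = -\eta + \eta^{2k+1}(1+\tau) + s\,\vR(\eta,s),\qquad s := \tau^{1/(2k)},
\end{align*}
where $\vR$ is as smooth in $(\eta,s)$ as the hypothesis $r^{(j)}(y) = O(y^{2k+2-j})$ permits. The unperturbed odd polynomial $-\eta + \eta^{2k+1}$ has two critical points $\pm (2k+1)^{-1/(2k)}$, so for small $s$ and $X$ in a narrow interval around $0$ the implicit function theorem yields three real roots $\eta_-(X,s) < \eta_0(X,s) < \eta_+(X,s)$, smooth in $(X,s)$ away from the critical values, with $\eta_\pm(0,0) = \pm 1$; the entropy condition picks out the two outer roots $\eta_\pm$ as the shock-side characteristic feet.

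The shock is then determined by the coupled system
\begin{align*}
\varphi(t) = H(y_+(t),t) = H(y_-(t),t),\qquad \varphi'(t) = \frac{f(u_0(y_+(t)))-f(u_0(y_-(t)))}{u_0(y_+(t))-u_0(y_-(t))},
\end{align*}
where $y_\pm(t) = s\,\eta_\pm(\Xi(\tau),s)$ and $\Xi(\tau) := \varphi(1+\tau)/\tau^{1+1/(2k)}$. Using $u_0'(0) \ne 0$ (forced by $g'(0) = f''(u_0(0))u_0'(0) = -1$) and Taylor-expanding $f\circ u_0$ and $u_0$ at $0$, the R--H equation becomes a fixed-point problem in $s$ for the triple $(\eta_-,\eta_+,\Xi)$. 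The odd symmetry of $-\eta + \eta^{2k+1}$ enforces $\Xi(0) = 0$ and $\eta_\pm(0) = \pm 1$, and perturbation in $s$ produces $\Xi(s) = a_1 s + O(s^2)$. Hence
\begin{align*}
\varphi(1+\tau) = a_1\,\tau^{1+1/k} + O(\tau^{1 + 3/(2k)}),\qquad \varphi'(1+\tau) = \tfrac{k+1}{k}\,a_1\,\tau^{1/k} + \cdots,
\end{align*}
yielding $\varphi \in C^{(k+1)/k}[1,1+\varepsilon)$. For the bounds \eqref{1.1}--\eqref{1.3}, off the shock only one of the outer branches of $H^{-1}$ is needed, and from $H_y(y,1+\tau) = -\tau + (2k+1)y^{2k}(1+\tau) + O(y^{2k+1}\tau)$ one has $|H_y| \gtrsim (\tau^{1/(2k)} + |y|)^{2k}$; combined with $|y| \lesssim \tau^{1/(2k)} + |x|^{1/(2k+1)}$ (read off by balancing the two regimes $|y|\ll \tau^{1/(2k)}$ and $|y|\gg \tau^{1/(2k)}$ in $x = H(y,1+\tau)$) and the chain rule on $u = u_0\circ H^{-1}$, this gives \eqref{1.2}--\eqref{1.3}; \eqref{1.1} is immediate from $u_0\in C^1$.

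The main obstacle I anticipate is verifying that the leading coefficient $a_1$ in $\Xi(s)$ is genuinely nonzero, so that the $C^{(k+1)/k}$ regularity is sharp rather than an underestimate. This requires carefully tracking the nonsymmetric perturbations -- those coming from $r(y)$ and from the $\tau\cdot\eta^{2k+1}$ piece -- through the Rankine--Hugoniot fixed point, while simultaneously confirming that the branch selection $y_\pm = s\,\eta_\pm$ stays in the outer-root region throughout $t \in [1,1+\varepsilon)$, so that \eqref{entropy condition} holds uniformly along the constructed curve.
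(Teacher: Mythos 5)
Your strategy is essentially the paper's: the classical solution by characteristics up to $t=1$, the rescaling $s=(t-1)^{1/(2k)}$, $\mu=y/s$, $\lambda=x/s^{2k+1}$, the implicit function theorem for the two outer roots, and a Fuchsian-type fixed point for the Rankine--Hugoniot ODE giving $\varphi(t)=O((t-1)^{(k+1)/k})$ and hence $\varphi\in C^{(k+1)/k}$. (Incidentally, the nonvanishing of your coefficient $a_1$ is only needed for the \emph{optimality} of this exponent, which is the content of Remark 3.1 and not of the theorem; and the uniqueness asserted in the theorem should be extracted explicitly from the contraction-mapping solution of the rescaled ODE, which you leave implicit.)

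The genuine gap is in the passage to \eqref{1.2}--\eqref{1.3}. The inequality $|H_y|\gtrsim(\tau^{1/(2k)}+|y|)^{2k}$ is false as a statement about $H_y(y,1+\tau)=-\tau+(2k+1)(1+\tau)y^{2k}+O(y^{2k+1})$: it fails on the two envelope branches $y=\eta_\pm(t)\sim\pm(2k+1)^{-1/(2k)}\tau^{1/(2k)}$, where $H_y=0$ while the right-hand side is comparable to $\tau$. The bound is only true along the entropy-selected feet $y_+(t,x)$ for $x\ge\varphi(t)$ and $y_-(t,x)$ for $x\le\varphi(t)$, and to prove it there you must (a) verify that the shock lies strictly inside the cusp, $x_+(t)<\varphi(t)<x_-(t)$ --- which does follow from $\varphi=O(\tau^{(k+1)/k})=o(\tau^{(2k+1)/(2k)})$ against $x_\pm(t)\sim\mp 2k(2k+1)^{-(2k+1)/(2k)}\tau^{(2k+1)/(2k)}$ --- so that the scaled feet $\mu_\pm$ stay uniformly away from the critical values $\pm(2k+1)^{-1/(2k)}$, and (b) make the lower bound uniform over a full punctured neighbourhood of $(1,0)$. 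Your chart is an implicit-function statement at fixed $\lambda$, whose constants degenerate both as $\lambda$ approaches the cusp value and as $|\lambda|\to\infty$; the region $|t-1|\lesssim|x|^{2k/(2k+1)}$ near the $x$-axis (and the half-plane $t<1$) cannot be reached by ``balancing the two regimes'' inside a single chart. The paper resolves this with a second set of variables $\xi=x^{1/(2k+1)}$, $\eta=(t-1)/\xi^{2k}$ near the $x$-axis, a separate expansion for $t<1$, and a Heine--Borel covering in the parameter $c$ to obtain a uniform constant; some such covering argument is indispensable and is missing from your outline.
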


\begin{theorem}\label{main result2}
Under assumption \eqref{0-8}, there exists a unique solution  $u\in C^1((0,1)\times\bR)\cap C([0,1]\times\bR)$ to
problem \eqref{0-1} together with \eqref{Rankine-Hugoniot Condition}-\eqref{entropy condition} for $t\ge 1$.
Furthermore,

(i) $\vp(t)\in C^1[1,1+\ve]$ and
$u\in C^1(((1,1+\ve)\times\bR)\setminus \{x=\vp(t)\})$ for some $\ve>0$. In addition, $\vp(t)=O((t-1)|\ln(t-1)|^{-\f{2}{p}})$
near $t=1$ and for $t>1$.

(ii) near the blowup point $(1,0)$, the behaviors of $u$ and its derivatives are as follows
\ben
\label{2.1}|u(t,x)-u(1,0)|&=&O(|\ln|t-1||^{-\f{1}{p}}+|\ln |x||^{-\f{1}{p}}),\\
\label{2.2}|\p_t u(t,x)|&=&O(|t-1|^{-1}|\ln|t-1||^{-1-\f{1}{p}}+|x|^{-1}|\ln|x||^{-1-\f{1}{p}}),\\
\label{2.3}|\p_x u(t,x)|&=&O(|t-1|^{-1}|\ln|t-1||^{-1}+|x|^{-1}|\ln|x||^{-1}).
\een
\end{theorem}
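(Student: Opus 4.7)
For $t\in[0,1)$ the ordinary method of characteristics gives the unique $C^1$ solution $u(t,x)=u_0(\xi(t,x))$ with $x=\xi+g(\xi)t$, and writing $g(x)=-x+h(x)$ with $h(x)=e^{-|x|^{-p}}(x/p+r_0(x))$ exhibits the degeneracy of $\xi\mapsto\xi+g(\xi)$ as concentrated at the single point $(1,0)$. For $t\ge 1$ I would construct the triple $(\vp(t),\eta(t),\zeta(t))$, where $\eta>0>\zeta$ are the characteristic feet of the two shock states, from the system
\begin{align*}
\vp(t) &= \eta+g(\eta)t=\zeta+g(\zeta)t,\\
\vp'(t)\bigl(u_0(\eta)-u_0(\zeta)\bigr) &= f(u_0(\eta))-f(u_0(\zeta)),
\end{align*}
subject to $\vp(1)=\eta(1)=\zeta(1)=0$. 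Subtracting the two characteristic equations yields the central implicit relation $(\zeta-\eta)\tau=(h(\zeta)-h(\eta))(1+\tau)$, where $\tau:=t-1$.

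Since the leading part of $h$ is odd, I would seek $\zeta(t)=-\eta(t)+\delta(t)$ with $|\delta|\ll|\eta|$; using $h(-\eta)-h(\eta)=-2(\eta/p)e^{-|\eta|^{-p}}(1+O(\eta^2))$, the leading balance becomes $\tau/(1+\tau)=(1/p)e^{-|\eta|^{-p}}(1+o(1))$, which inverts to $|\eta(t)|=|\ln\tau|^{-1/p}(1+o(1))$. Substituting this into $2\vp(t)=-(\eta+\zeta)\tau+(h(\eta)+h(\zeta))(1+\tau)$, the leading odd contributions cancel identically and the residue is driven by the even perturbation $r_0(\eta)=O(\eta^2)$, yielding the sharp asymptotics $\vp(t)=O(\tau|\ln\tau|^{-2/p})$ announced in (i); the Rankine--Hugoniot relation then forces $\vp'(t)\to 0$ continuously as $\tau\to 0^+$, so $\vp\in C^1[1,1+\ve]$. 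To turn the formal analysis into a proof I would reparametrise the singular system using $\eta$ (or the auxiliary variable $\sigma=|\eta|^{-p}$) as the independent time-like variable, which exposes the exponentially small factor $e^{-\sigma}$ and recasts the problem as a regular fixed-point equation to which a contraction argument on a weighted $C^0$-ball applies on some $[1,1+\ve]$; uniqueness comes out of the same fixed point, and the entropy condition \eqref{entropy condition} is automatic because $h'(\xi)>0$ for $\xi\ne 0$ gives $g'(\zeta)<-1<g'(\eta)$.

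Once $(\vp,\eta,\zeta)$ is in hand, the pointwise estimates (ii) reduce to evaluating $u=u_0(\xi)$ on either side of the shock after inverting $x=-\xi\tau+h(\xi)(1+\tau)$ in the two regimes where $|\xi\tau|$ or $|h(\xi)|$ dominates; the resulting envelope $|\xi|\le C(|\ln\tau|^{-1/p}+|\ln|x||^{-1/p})$ delivers \eqref{2.1}, and combining it with $\partial_x u=u_0'(\xi)/(1+g'(\xi)t)$, $\partial_t u=-g(\xi)\partial_x u$, together with the lower bound $|1+g'(\xi)t|\ge c(\tau+|\xi|^{-p}e^{-|\xi|^{-p}})$, then produces \eqref{2.2}--\eqref{2.3}.

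The principal obstacle is the infinite-order flatness of $g'$ at the blowup point: no polynomial normal form is available, so neither the classical framework of \cite{Le94} nor the bifurcation analysis underlying Theorem \ref{main result1} applies, and every natural quantity inherits logarithmic corrections. The delicate step is controlling $\delta=\eta+\zeta$ past the trivial cancellation of odd terms, since the sharp rate $|\ln\tau|^{-2/p}$ for $\vp$ (rather than the naive $|\ln\tau|^{-1/p}$) appears only after the Rankine--Hugoniot balance is expanded one order beyond the odd-symmetric approximation; closing the contraction uniformly as $\tau\to 0^+$ in the presence of these exponentially small ingredients is the main technical hurdle.
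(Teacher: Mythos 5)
Your route to part (i) is viable and is organized genuinely differently from the paper's. You keep the triple $(\vp,\eta,\zeta)$ coupled, extract the relation between the two characteristic feet by subtracting the characteristic equations, and read $\vp$ off their sum; the paper instead first inverts the characteristic map to obtain $y_\pm(t,x)$ as $C^{1+p}$ functions of the rescaled variables $s=|\ln\tau|^{-1/p}$, $\lambda=x/(s\tau)$ (Lemma \ref{lem 4-2}, via the substitution $\zeta=s^{-p}(1-|\mu|^{-p})-\ln p$ and the implicit function theorem, split into $p\ge1$ and $p<1$), and only then writes Rankine--Hugoniot as a scalar singular ODE for $\lambda(s)=\vp/(s\tau)$, solved through the explicit integral representation \eqref{4-20} and a contraction. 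Both routes rest on the same two mechanisms --- the logarithmic rescaling $|\eta|\sim|\ln\tau|^{-1/p}$ and the cancellation of the odd part of $g$ that depresses $\vp$ to $O(\tau|\ln\tau|^{-2/p})$ --- and the ``delicate step'' you flag (controlling $\delta=\eta+\zeta$ one order beyond the odd approximation) is exactly what the paper encodes in the estimate \eqref{4-17} for the source term $d(s,\lambda)$, where the $\lambda$-independent quadratic errors of $\mu_+$ and $\mu_-$ must cancel in the sum; you have identified the crux correctly, and the paper's scalar-ODE formulation is essentially a cleaner bookkeeping of the same computation.

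There are two genuine defects. First, your justification of the entropy condition is wrong as stated: since $h'(x)>0$ and $h'$ is exponentially small near $0$, one has $g'(\eta),\,g'(\zeta)>-1=g'(0)$, not $g'(\zeta)<-1<g'(\eta)$; the correct argument is the one in \eqref{3-14}, namely the mean value theorem applied to the Rankine--Hugoniot quotient combined with the strict monotonicity of $g$ near $0$, giving $g(\eta)<\vp'(t)<g(\zeta)$. Second, and more substantially, part (ii) requires the bounds in a full punctured neighbourhood of $(1,0)$ --- including $t<1$ --- and your two-regime inversion of $x=-\xi\tau+h(\xi)(1+\tau)$ does not deliver that. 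The bound $1+tg'(y)\gtrsim\tau+|y|^{-p}e^{-|y|^{-p}}$ cannot hold uniformly (it fails on the envelopes $x=x_\pm(t)$), so it must be established ray by ray: the paper needs a separate implicit-function-theorem expansion along each ray $x\sim c\,s\tau$ (Lemmas \ref{lem 4-4} and \ref{lem 4-6}, with the regimes $|c|>1$ and $|c|<1$ behaving completely differently for $t<1$), a separate chart near the $x$-axis in the variables $\xi,\eta$ of \eqref{4-33} (Lemma \ref{lem 4-5}), a compactness argument to patch finitely many such sectors together, and a bespoke squeezing argument \eqref{4-56}--\eqref{4-78} on the degenerate rays $c=\pm1$ for $t<1$, where the linearized characteristic equation loses invertibility and no single implicit-function-theorem statement applies. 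That case analysis is the bulk of the paper's proof of Theorem \ref{main result2} and is missing from your plan.
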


\begin{remark}
If we take $k=1$, then Theorem 1.1 coincides with the result in Theorem 2 of \cite{Le94}.
In addition, the author in  \cite{Le94} only shows the behaviors of $u$ in some part of the
neighbourhood of the blowup point  $(1,0)$, which corresponds to
the smallness of the variable $|\la|=\f{|x|}{(t-1)^{\f32}}$ for $t>1$. This can be referred to
the proof of Lemma 2.1 in \cite{Le94}, where $|\la|$ is required to be small.
\end{remark}

\begin{remark}
The regularities of $\vp(t)$ in Theorem 1.1 and  Theorem 1.2 are optimal.
One can see Remark 3.1 and Remark 4.1 below.
\end{remark}

\begin{remark}
Under the generic nondegenerate assumption of the initial data, for the 1-D $2\times 2$ $p-$ system of
polytropic gases, the authors in \cite{Kong}-\cite{Le94} and \cite{Chen-Dong} obtain the formation and  construction
of the shock wave starting from the blowup point under some variant conditions; for the 1-D $3\times 3$
strictly hyperbolic conservation laws with the small initial data or the 3-D full compressible Euler equations with
symmetric structure and small perturbation,
the authors in \cite{Chen-Xin-Yin}, \cite{Yin1} and \cite{C-L} also get the formation and  construction
of the resulting shock waves, respectively.
\end{remark}

In order to prove Theorem 1.1-1.2,
our focus is to solve the singular and nonlinear ordinary differential equation \eqref{Rankine-Hugoniot Condition}
as in \cite{Le94}. Note that the equation \eqref{Rankine-Hugoniot Condition} is equivalent to
$\vp'(t)=G(t,\vp(t))\triangleq \int_0^1f'(\th u_+(t,\vp(t))+(1-\th)u_-(t,\vp(t)))d\th$, where
the function $G(t,\vp)$ is not Lipschtzian with respect to variable $\vp$ since the first order derivative of
$u_{\pm}(t,x)$ with respect to the variable $x$ admits the strong singularities (see (1.13) and (1.17)). To get the uniqueness and
regularity of $(\vp(t), u_{\pm}(t,x))$, we require to carefully analyze the behavior and regularity
of solution $u$ near the blowup point $(1,0)$. Due to the more degenerate conditions \eqref{0-5}
and \eqref{0-6}, we shall introduce some different transformations of $(t,x)$ from that in \cite{Le94}
(for examples, see \eqref{3-4},  \eqref{3-15}, \eqref{4-4} and so on).
By involved computation, the behaviors of solution $u$ around the point $(1,0)$ are
derived and then the optimal regularities of $\vp(t)$ are also established. From our results,
we have known two basic facts for problem \eqref{0-1}: (1) Around the blowup point, the shock really appears whether the initial data are
degenerate with finite orders or with infinite orders. (2)  The optimal regularities
of the shock solution and the resulting shock curve  have explicit relations with the degenerate degrees of the initial data.

Our paper is organized as follows. In Section 2, we give some basic analysis on the characteristics envelope
of equation \eqref{0-1} near $(1,0)$, meanwhile, the detailed behaviors of the characteristics near  $(1,0)$
are established. The proofs of Theorem 1.1 and Theorem 1.2 are given in Section 3 and Section 4 respectively.

\section{Some preliminary}

For problem \eqref{0-1}, we define the characteristics $x=x(t,y)$ starting from the point
$(0,y)$ as follows
\beq\label{2-1}
\bec
&\ds\f{d x(t,y)}{d t}=f'(u(t,x(t,y))),\\
&x(0,y)=y.
\eec
\eeq
Then along this characteristics we have
\beq\label{2-2}
u(t,x(t,y))\equiv u_0(y).
\eeq
This means that the characteristics $x(t,y)$ is straight and
\beq\label{2-3}
x(t,y)=y+tg(y).
\eeq
For any fixed $t>0$, in order to solve $y=y(t,x)$ in \eqref{2-3} such that the solution $u$ in \eqref{2-2}
can be obtained, it is necessary to let
\beq\label{2-4}
\f{\p x}{\p y}(t,y)=1+tg'(y)>0.\no
\eeq
By assumption \eqref{0-7} or \eqref{0-8}, we have that near $x=0$,

(i) for $0\le t< 1$, $\f{\p x}{\p y}(t,y)>0$;

(ii) $\f{\p x}{\p y}(1,y)\ge 0$, and only at $y=0$, $\f{\p x}{\p y}(1,y)=0$.

Thus for $0\le t\le 1$, one can get a function
$y=y(t,x)$ satisfying \eqref{2-3} such that the solution to \eqref{0-1} is
\beq\label{2-5}
u(t,x)=u_0(y(t,x)).
\eeq
On the other hand, one can compute that for $0\le t<1$,
\beq\label{2-6}
\left\{
  \begin{array}{l}
   \f{\p y}{\p t}=-\f{g(y)}{1+tg'(y)},\\
   \f{\p y}{\p x}=\f{1}{1+tg'(y)}.
  \end{array}
\right.
\eeq
This means that as $(t,x)$ tends to $(1-,0)$, then $y(t,x)\rightarrow 0$ and
$|\p_x y(t,x)|\rightarrow +\i$.

\bigskip
Let $\ve>0$ be a sufficiently small constant.
Under assumption \eqref{0-2}, it is easy to check that for $1<t<1+\ve$ and $y$ near $0$, there exist two roots of $\p_y x(t,y)=0$
with respect to the variable $y$, which are denoted by
$\eta_-(t)$ and $\eta_+(t)$ with $\eta_-(t)<\eta_+(t)$. Set $x_\pm(t)=x(t,\eta_\pm(t))$, we then have
\begin{itemize}
  \item for $x<x_+(t)$ ($x>x_-(t)$ resp.) and equality \eqref{2-3}, there exists a unique root denoted by $y_-(t,x)$ ($y_+(t,x)$ resp.).
  \item for $x=x_+(t)$ ($x=x_-(t)$ resp.)  and equality \eqref{2-3}, there exist two roots denoted by $y_-(t,x)<\eta_+(t)$ ($\eta_-(t)<y_+(t,x)$ resp.).
  \item for $x_+(t)<x<x_-(t)$  and equality \eqref{2-3}, there exist three roots denoted by $y_-(t,x)<y_0(t,x)<y_+(t,x)$.
\end{itemize}

Set
\ben
\O_-&=&\{(t,x):1< t<1+\ve,\ x<x_-(t)\}\no\\
\O_+&=&\{(t,x):1< t<1+\ve,\ x>x_+(t)\}\no\\
\O_0&=&\{(t,x):1< t<1+\ve,\ x_+(t)<x<x_-(t)\}.\no
\een

Under \eqref{0-7}, we derive some properties of $\eta_\pm(t)$ and $x_\pm(t)$ near the blowup point $(1, 0)$.

\begin{lemma}\label{lem 3-1}
There exists an $\ve>0$ sufficiently small such that\\
(1) $\eta_\pm(t)\in C^{2k+1}(1,1+\ve)\cap C^\f{1}{2k}[1,1+\ve)$ admit the following expansion
\beq\label{3-1}
\eta_\pm(t)=\pm(2k+1)^{-\f{1}{2k}}(t-1)^{\f{1}{2k}}
-\f{g^{(2k+2)}(0)}{2k(2k)!}(2k+1)^{-\f{2k+1}{k}}(t-1)^{\f{1}{k}}+o((t-1)^{\f{1}{k}});
\eeq
(2) $x_\pm(t)=x(t,\eta_\pm(t))\in C^{2k+1}(1,1+\ve)\cap C^\f{2k+1}{2k}[1,1+\ve)$ are the envelopes of the characteristic lines
 \eqref{2-3} which form a cusp at $(1,0)$, meanwhile,
\beq\label{3-2}
x_\pm(t)=\mp 2k(2k+1)^{-\f{2k+1}{2k}}(t-1)^\f{2k+1}{2k}
+\f{g^{(2k+2)}(0)}{(2k+2)!}(2k+1)^{-\f{k+1}{k}}(t-1)^{\f{k+1}{k}}+o((t-1)^{\f{k+1}{k}}).
\eeq
\end{lemma}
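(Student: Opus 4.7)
The plan is to solve the defining equation $\partial_y x(t,y) = 1 + tg'(y) = 0$ for the two roots $\eta_\pm(t)$ near $(1,0)$ and then substitute them into $x(t,y)=y+tg(y)$ to get $x_\pm(t)$. Under assumption \eqref{0-7}, $g'(y) = -1 + (2k+1)y^{2k} + r'(y)$, where the degeneracy conditions $r^{(j)}(x)=O(x^{2k-j+2})$ force $r^{(j)}(0)=0$ for $0\le j\le 2k+1$ and $r^{(2k+2)}(0)=g^{(2k+2)}(0)$; in particular, $r'(y)=\frac{g^{(2k+2)}(0)}{(2k+1)!}y^{2k+1}+o(y^{2k+1})$. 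The defining equation thus becomes
\begin{equation*}
(2k+1)\eta^{2k} + r'(\eta) = \frac{t-1}{t},
\end{equation*}
which for $t>1$ sufficiently close to $1$ has exactly two small real roots $\eta_-(t) < 0 < \eta_+(t)$ by monotonicity of the left-hand side on each side of $0$.

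For the regularity on the open interval I would apply the implicit function theorem to $G(t,\eta):=1+tg'(\eta)$: since $\partial_\eta G=tg''(\eta)$ with $g''(y)=2k(2k+1)y^{2k-1}+O(y^{2k})\ne 0$ for small $y\ne 0$, IFT yields $\eta_\pm\in C^{2k+1}(1,1+\ve)$ directly from $G\in C^{2k+1}$. For the asymptotic \eqref{3-1}, I would rewrite the equation as
\begin{equation*}
\eta^{2k} = \frac{t-1}{(2k+1)t}\bigl(1+s(\eta)\bigr)^{-1},\qquad s(\eta) = \tfrac{g^{(2k+2)}(0)}{(2k+1)(2k+1)!}\eta + O(\eta^2),
\end{equation*}
take the $(2k)$-th root and bootstrap: the leading approximation $\eta\sim\pm(2k+1)^{-1/(2k)}(t-1)^{1/(2k)}$ substituted into $s(\eta)$ and combined with the Taylor expansion of $t^{-1/(2k)}$ at $t=1$ produces the second term of \eqref{3-1}. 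This immediately yields $\eta_\pm\in C^{1/(2k)}[1,1+\ve)$, with remainders controlled by further iteration.

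For $x_\pm(t)=(1-t)\eta_\pm(t)+t\eta_\pm^{2k+1}(t)+tr(\eta_\pm(t))$, I would substitute the expansion of $\eta_\pm$ just obtained and expand each summand through order $(t-1)^{(k+1)/k}$. The leading $(t-1)^{(2k+1)/(2k)}$ contributions from $(1-t)\eta_\pm$ and $t\eta_\pm^{2k+1}$ combine to $\mp 2k(2k+1)^{-(2k+1)/(2k)}(t-1)^{(2k+1)/(2k)}$, and a direct check shows that the next-order $(t-1)^{(k+1)/k}$-terms generated by these two expressions (coming from the $g^{(2k+2)}$-correction in $\eta_\pm$) cancel exactly, leaving only the contribution from $tr(\eta_\pm)\sim\frac{g^{(2k+2)}(0)}{(2k+2)!}\eta_\pm^{2k+2}$, which reproduces the coefficient in \eqref{3-2}. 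The envelope property is automatic since $\partial_y x(t,\eta_\pm(t))=0$, and the cusp at $(1,0)$ follows because $x_+(t)<0<x_-(t)$ for $t>1$ while $x_\pm'(t)\to\mp\infty$ as $t\to 1^+$.

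The main obstacle is the bookkeeping in this last expansion: three distinct sources contribute at order $(t-1)^{(k+1)/k}$ in $x_\pm$, and verifying the cancellation of the two ``undesired'' contributions requires keeping $\eta_\pm$ through the $(t-1)^{1/k}$-term with the correct coefficient and then expanding $\eta_\pm^{2k+1}$ consistently. I would organize this by writing $\eta_\pm(t)=\alpha_\pm(t-1)^{1/(2k)}+\beta(t-1)^{1/k}+o((t-1)^{1/k})$ once and for all and treating the three summands in $x_\pm$ as polynomials in $(t-1)^{1/(2k)}$ modulo $o((t-1)^{(k+1)/k})$.
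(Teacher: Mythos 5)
Your proposal is correct and follows essentially the same route as the paper: solve $1+tg'(y)=0$ for $\eta_\pm$ (the paper does this via the rescaling $s=(t-1)^{1/(2k)}$, $z=y/s$ and the implicit function theorem applied to $F(s,z)$, which packages your root-taking-plus-bootstrap into one IFT step and gives the $C^{1/(2k)}$ boundary regularity directly), then substitute into $x(t,y)=y+tg(y)$; your claimed cancellation of the two order-$(t-1)^{(k+1)/k}$ contributions from $-(t-1)\eta_\pm$ and $t\eta_\pm^{2k+1}$ does check out, since $(2k+1)\alpha^{2k}\beta=\beta$ for $\alpha^{2k}=(2k+1)^{-1}$. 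One detail is wrong: $x_\pm'(t)=g(\eta_\pm(t))\to g(0)=0$ as $t\to1^+$ (this is the paper's \eqref{3-12}), not $\mp\infty$; the cusp arises because both branches share the common tangent direction $g(0)$ at $(1,0)$ while separating like $(t-1)^{(2k+1)/(2k)}$, i.e.\ $x_\pm''(t)\to\mp\infty$.
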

\begin{proof}
(1) Note that $\eta_\pm(t)$ are the solutions of
\beq\label{3-3}
1+tg'(y)=-(t-1)+(2k+1)ty^{2k}+t r'(y)=0.
\eeq
This immediately yields $\eta_\pm\in C^{2k+1}(1,1+\ve]$ by the implicit function theorem.
For $t\to 1+$, set
\beq\label{3-4}
s=(t-1)^\f{1}{2k},\ z=\f{y}{s}.
\eeq
Then \eqref{3-3} becomes
\beq\label{3-5}
F(s,z)\triangleq (1+s^{2k})[(2k+1)z^{2k}+s^{-2k}r'(sz)]-1=0.
\eeq
Since $r'(sz)=O(s^{2k+1})$ for $s$ near $0$,
$F(0,z^0_\pm)=0$ holds for $z^0_\pm=\pm(2k+1)^{-\f{1}{2k}}$. By direct computation, we have that
\ben
\label{3-6} \p_s F(s,z)&=&2k(2k+1)s^{2k-1}z^{2k}-2k s^{-2k-1}r'(sz)+(s^{-2k}+1)z r''(sz),\\
\label{3-7} \p_z F(s,z)&=&(1+s^{2k})[2k(2k+1)z^{2k-1}+s^{-2k+1}r''(sz)].
\een
Together with $r(sz)=\f{g^{(2k+2)}(0)}{(2k+2)!}(sz)^{2k+2}+o(s^{2k+2})$, this yields
\ben
\label{3-8} \p_s F(0,z^0_\pm)&=&\pm\f{g^{(2k+2)}(0)}{(2k+1)!}(2k+1)^{-\f{2k+1}{2k}},\\
\label{3-9} \p_z F(0,z^0_\pm)&=&\pm 2k(2k+1)^\f{1}{2k}\neq0.
\een
By the implicit function theorem, for small $\ve>0$ there exist
\beq\label{3-10}
z=z_\pm(s)\in C^{2k+1}[0,\ve]
\eeq
such that $F(s,z_\pm(s))=0$ and
\beq\label{3-11}
z_\pm(s)=z^0_\pm-\f{g^{(2k+2)}(0)}{2k(2k)!}(2k+1)^{-\f{2k+1}{k}}s+o(s).
\eeq
Therefore, \eqref{3-1} is shown and then $\eta_\pm(t)\in C^{\f{1}{2k}}[1,1+\ve]$.

\bigskip

(2) By \eqref{0-7} and \eqref{2-3}, we have
$$
x_\pm(t)=x(t,\eta_\pm(t))=-(t-1)\eta_\pm(t)+t\eta^{2k+1}_\pm(t)+t r(\eta_\pm(t)).
$$
Together with \eqref{3-1}, this yields $x_\pm(t)\in C^{2k+1}(1,1+\ve]\cap C^\f{2k+1}{2k}[1,1+\ve]$
and the expansion \eqref{3-2}. In addition, due to $\f{\p}{\p y}x(t,\eta_\pm(t))=0$, then for $t\in[1,1+\ve]$,
\beq\label{3-12}
\f{d}{dt}x_\pm(t)=\f{\p}{\p t}x(t,\eta_\pm(t))=g(\eta_\pm(t)).
\eeq
This means that the tangent direction of $x=x_\pm(t)$ coincides with the characteristic speed of \eqref{2-3} at $(t,x_\pm(t))$.
Consequently, the proof of (2) is finished.
%
\end{proof}

Under \eqref{0-8}-\eqref{0-9}, we have
\begin{lemma}\label{lem 4-1}
For $\eta_\pm(t),\ x_\pm(t),\ y_\pm(t,x)$ and $y_0(t,x)$, we can deduce the following properties
for small $\ve>0$:\\
(1) $\eta_\pm(t)\in C^\i(1,1+\ve]\cap C[1,1+\ve]$ with $\eta_\pm(\tau)=\pm|\ln(t-1)|^{-\f{1}{p}}+O(\f{\ln|\ln(t-1))|}{|\ln(t-1)|})$.\\
(2) $x_\pm(t)=x(t,\eta_\pm(t))\in C^\i(1,1+\ve]\cap C^{1}[1,1+\ve]$ are the envelopes of the characteristic lines
and form a cusp at $(1,0)$. Moreover we have the expansion $x_\pm(t)=\mp(t-1)|\ln(t-1)|^{-\f{1}{p}}+O(\f{(t-1)\ln|\ln(t-1)|}{|\ln(t-1)|})$.\\
(3) For any $t\in (1,1+\ve]$, $y_-(t,\cdot)$ is an increasing function from $(-\i,x_-(t)]$ onto $(-\i,\eta_-(t)]$; $y_0(t,\cdot)$
is a decreasing function from $[x_+(t),x_-(t)]$ onto $[\eta_-(t),\eta_+(t)]$; $y_+(t,\cdot)$ is an increasing function from
$[x_+(t),+\i)$ onto $[\eta_+(t),+\i)$. Moreover, $y_m(t,x)\in C^\i(\O_m)\cap C(\bar\O_m)$, where $m=-,+,0$.
\end{lemma}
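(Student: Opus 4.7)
The plan is to follow the structure of Lemma 2.1 but replace its polynomial rescaling by a logarithmic one, dictated by the exponentially flat behaviour of $g$ at the origin prescribed by \eqref{0-8}--\eqref{0-9}. The key analytic input is that for $y$ near $0$,
\[
g'(y)=-1+e^{-|y|^{-p}}\bigl[|y|^{-p}+O(1)\bigr],
\]
so the equation $1+tg'(y)=0$ defining $\eta_\pm(t)$ reduces to $e^{-|y|^{-p}}\bigl[|y|^{-p}+O(1)\bigr]=(t-1)/t$. For part (1), I would first check that the dominant term of $g''(y)$ near $y=0^+$ is $p\,y^{-2p-1}e^{-y^{-p}}>0$ (and symmetrically as $y\to 0^-$), so that for $t\in(1,1+\ve]$ the implicit function theorem applied at the nonzero critical point $y=\eta_\pm(t)$ gives $\eta_\pm\in C^\infty(1,1+\ve]$. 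To obtain the expansion near $t=1$, substitute $u=|y|^{-p}$, rewrite the critical equation as $ue^{-u}(1+O(u^{-1}))=(t-1)/t$, and take logarithms to get $u=|\ln(t-1)|+\ln u+O(1)$; bootstrapping yields $u=|\ln(t-1)|+O(\ln|\ln(t-1)|)$, whence
\[
|\eta_\pm(t)|=|\ln(t-1)|^{-1/p}\Bigl(1+O\bigl(\ln|\ln(t-1)|/|\ln(t-1)|\bigr)\Bigr),
\]
with signs fixed by the branch. Continuity of $\eta_\pm$ at $t=1$ and the stated expansion follow.

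For part (2), I would substitute the expansion of $\eta_\pm$ into $x_\pm(t)=\eta_\pm(t)+tg(\eta_\pm(t))$. The leading contribution comes from $(1-t)\eta_\pm(t)=\mp(t-1)|\ln(t-1)|^{-1/p}$ up to an error of order $(t-1)\ln|\ln(t-1)|/|\ln(t-1)|$; the remainder $te^{-|\eta_\pm|^{-p}}(\eta_\pm/p+r_0(\eta_\pm))$ is of order $(t-1)|\ln(t-1)|^{-1-1/p}$ and is absorbed into that error. The envelope/cusp structure follows from identity \eqref{3-12}, whose derivation does not use \eqref{0-7}: since $\f{d}{dt}x_\pm(t)=g(\eta_\pm(t))\to g(0)=0$ as $t\to 1+$, both branches meet only at $(1,0)$ with vanishing $t$-derivative, producing the claimed cusp. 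Combined with $\eta_\pm\in C^\infty(1,1+\ve]$, this gives $x_\pm\in C^\infty(1,1+\ve]\cap C^1[1,1+\ve]$.

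For part (3), I would use that $\p_y x(t,y)=1+tg'(y)$ has exactly the two zeros $\eta_\pm(t)$ in a fixed neighbourhood of $0$, is positive on $\{y<\eta_-(t)\}\cup\{y>\eta_+(t)\}$, and negative on $(\eta_-(t),\eta_+(t))$. Hence $y\mapsto x(t,y)$ is strictly increasing on $(-\i,\eta_-(t)]$ and on $[\eta_+(t),+\i)$ and strictly decreasing on $[\eta_-(t),\eta_+(t)]$, producing the three monotone inverses $y_-,y_0,y_+$ with the stated domains, ranges and monotonicity. Smoothness on $\O_m$ comes from the implicit function theorem applied to $x=y+tg(y)$, and the continuous extension to $\bar\O_m$ from the uniform monotonicity together with the continuity of $\eta_\pm$ and $x_\pm$ at $t=1$ from (1)--(2). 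The main obstacle is the transcendental inversion in (1): unlike the polynomial case treated in Lemma 2.1, no polynomial rescaling linearises the defining equation of $\eta_\pm$, and the exponential factor $e^{-|y|^{-p}}$ forces one to iterate the implicit logarithmic relation and to track the nested logarithmic errors accurately enough to yield an expansion sharp enough for the analysis of $x_\pm$ in (2) and, ultimately, for the behaviour of $u$ near $(1,0)$ claimed in Theorem \ref{main result2}.
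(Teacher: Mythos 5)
Your proposal is correct and follows essentially the same route as the paper: reduce the critical-point equation $1+tg'(y)=0$ to a transcendental relation for $|y|^{-p}$, invert it asymptotically to obtain the expansion of $\eta_\pm(t)$, substitute into $x_\pm(t)=\eta_\pm(t)+tg(\eta_\pm(t))$ and use $\frac{d}{dt}x_\pm(t)=g(\eta_\pm(t))\to g(0)=0$ for the envelope/cusp and $C^1[1,1+\ve]$ claims, and derive part (3) from the sign of $\partial_y x(t,y)$ together with the inverse and implicit function theorems. The only methodological difference is in part (1), where you invert $ue^{-u}\sim (t-1)/t$ by a direct logarithmic bootstrap, whereas the paper introduces the shifted variable $z=|y|^{-p}-\left(|\ln(t-1)|+\ln|\ln(t-1)|\right)$ and applies the implicit function theorem to $F(\tau,z)=0$; both give the same asymptotics $|\eta_\pm(t)|^{-p}=|\ln(t-1)|+O(\ln|\ln(t-1)|)$ and hence the stated expansions.
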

\begin{proof}
(1) Set $\tau=t-1$ for $t\geq1$. Note that $\eta_\pm(t)$ are defined for small $\tau>0$ and  are the solutions of the following equation
\beq\label{4-1}
1+tg'(y)=\left(-\tau+\f{1}{|y|^p} e^{-|y|^{-p}}\right)+\f{\tau}{|y|^p} e^{-|y|^{-p}}+(\tau+1)e^{-|y|^{-p}}r_1(y)=0,
\eeq
where $r_1(y)=\f{1}{p}+\f{y r_0(y)}{|y|^{p+2}}+r_0'(y)=O(|y|^{
\min\{-p+1,0\}})$. Denote $\o=|\ln\tau|^{-1}$ and $z=|y|^{-p}-(\o^{-1}-\ln \o)$, then \eqref{4-1} becomes
\beq\label{4-2}
F(\tau,z)\triangleq -1+\left(1+\o z-\o\ln \o\right)e^{-z}+\o e^{-z}\left(
\tau\left(z+\o^{-1}-\ln \o\right)+(\tau+1)r_1(y)\right)=0.
\eeq
Obviously, $F(0,0)=0$. In addition, by direct computation, we have that for small $|y|$,
$$
r'_1(y)=-\f{p+1}{|y|^{p+2}}r_0(y)+\f{yr'_0(y)}{|y|^{p+2}}+r''_0(y)=O(|y|^{-p})
$$
and
$$
\f{\p y}{\p z}=-\f{y|y|^{p}}{p}.
$$
Thus one can check that
\beq
\f{\p F}{\p z}=\o e^{-z}-\left(1+z\o-\o\ln\o\right)e^{-z}-\o e^{-z}\left(
\tau\left(z+\o^{-1}-\ln \o-1\right)+(\tau+1)(r_1(y)-r'_1(y)
\f{\p y}{\p z})\right).\no
\eeq
This yields $\f{\p F}{\p z}(0,0)=-1$ by $|y|\lesssim \o^\f{1}{p}$. Since $F(\tau,z)$ is continuous and has the continuous partial derivative $\f{\p F}{\p z}$ near $(0,0)$, it follows from the implicit function theorem that there exists a continuous function $z=z(\tau)$ near $z=0$ to satisfy $F(\tau,z)=0$. This deduces  $\eta_\pm(t)=\pm(|\ln\tau|+\ln|\ln\tau|+z(\tau))^{-\f{1}{p}}=\pm|\ln\tau|^{-\f{1}{p}}+O(\f{\ln|\ln\tau|}{|\ln\tau|})$. On the other hand, for $\tau>0$,
we have $\f{\p x}{\p y}(t,y)\neq 0$ and $g'\in C^\i$. Then by the implicit function theorem, $\eta_\pm(t)\in C^\i(1,1+\ve]$ hold.

(2) By \eqref{0-8} and \eqref{2-3}, we have that for small $\tau>0$,
$$
x_\pm(t)=-\tau\eta_\pm(t)+t(\f{\eta_\pm(t)}{p}e^{-|\eta_\pm(t)|^{-p}}+r_0(\eta_\pm(t)))=\tau[-\eta_\pm(t)
+t(\f{\eta_\pm(t)}{p}+o(\eta_\pm(t)))e^{-|O(\f{\ln|\ln\tau|}{|\ln\tau|})|}],
$$
which derives $x_\pm(t)\in C^\i(1,1+\ve]\cap C[1,1+\ve]$. On the other hand, it holds that for $t\in (1,1+\ve]$,
\ben
\f{d}{dt}x_\pm(t)&=&\f{\p}{\p t}x(t,\eta_\pm(t))+\f{\p}{\p y}x(t,\eta_\pm(t))\f{d}{dt}\eta_\pm(t)\no\\
&=&g(\eta_\pm(t)),\no
\een
which means that the tangent direction of $x=x_\pm(t)$ is same as the characteristic speed of \eqref{2-3} at
$(t,x_\pm(t))$. In addition, at the point $(1,0)$, one has
\ben
x'_\pm(1)&=&\lim_{t\rightarrow1_+}\f{x_\pm(t,\eta_\pm(t))-0}{t-1}\no\\
&=&\lim_{t\rightarrow1_+}[-\eta_\pm(t)+t(\f{\eta_\pm(t)}{p}+o(\eta_\pm(t)))e^{O(\f{\ln|\ln\tau|}{|\ln\tau|})}]\no\\
&=&0\no\\
&=&g(0).\no
\een
Hence we finish the proof of (2).

(3) For any fixed $t\in (1, 1+\ve]$, due to
$$
\f{\p}{\p y}x(t,y)\left\{
                    \begin{array}{ll}
                      >0, & \text{ for }y\in(-\i,\eta_-(t))\cup(\eta_+(t),+\i),\\
                      =0, & \text{ for }y=\eta_\pm(t),\\
                      <0, & \text{ for }y\in(\eta_-(t),\eta_+(t)),
                    \end{array}
                  \right.
$$
then by the inverse function theorem, $y_m(t,\cdot)$ with $m=-,+,0$ are well defined and satisfy the corresponding monotonicity.
Moreover, $y_+(t,\cdot)\in C^\i(x_+(t),+\i)\cap C[x_+(t),+\i)$, $y_0(t,\cdot)\in C^\i(x_+(t),x_-(t))\cap C[x_+(t),x_-(t)]$
and $y_-(t,\cdot)\in C^\i(-\i,x_-(t))\cap C(-\i,x_-(t)]$.

On the other hand, because of $\p_y x(t,x)\neq 0$ for $(t,x)\notin \{x=x_\pm(t)\}$, thus
it follows from the implicit function theorem
that $y_m(t,x)\in C^\i(\O_m)$, $m=-,+,0$. For the continuity of $y_m(t,x)$ in $\bar \O_m$, we take $y_+(t,x)$ as an example.
By $x_+(t)\in C^1([1,1+\ve])$, we then get
$$
|y_+(\bar t,x_+(\bar t))-y_+(t,x)|=|y_+(\bar t,x_+(\bar t))-y_+(t,x_+(t))|+|y_+(t,x_+(t))-y_+(t,x)|\rightarrow 0
$$
as $(t,x)\rightarrow(\bar t,x_+(\bar t))$ for $\bar t\in[1,1+\ve]$ and $(t,x)\in \O_+$. Thus $y_+(t,x)\in C^\i(\O_+)\cap C(\bar\O_+)$
holds.\\
\end{proof}

To study the formation of shock wave and the regularity of the resulting shock $x=\vp(t)$ to equation \eqref{0-1},
it is required to study the properties of $y_\pm(t,x)$ for $(t,x)$ in the cusp domain $\O_0$. Under assumption
\eqref{0-5}, motivated by \cite{Le94}, we take the following change of the variables
\beq\label{3-15}
\tau=t-1,\ s=\tau^\f{1}{2k},\ \mu=\f{y}{s},\ \la=\f{x}{s^{2k+1}},
\eeq
and will establish the behavior of $y_\pm(t,x)$ near $(1,0)$ in some sub-domain of $\O_0$.

\begin{lemma}\label{lem 3-2}
For small $\ve>0$, under  assumption
\eqref{0-7}, there exists some constant $\dl>0$ such that for $(s,\la)\in\{0\le s\le \ve,\ |\la|\le \dl\}$,
$(s,\la)\rightarrow s^jy_\pm(t,x)$ are of $C^{j+2}$ for $j=-1,0,1,\ldots,2k$ and $y_{\pm}(t,x)$ admit the
following expansions
\ben
\label{3-16}y_+(t,x)&=&s(1+\f{\la}{2k}-\f{g^{(2k+2)}(0)}{2k(2k+2)!}s)+O(s^3+s\la^2),\\
\label{3-17}y_-(t,x)&=&s(-1+\f{\la}{2k}-\f{g^{(2k+2)}(0)}{2k(2k+2)!}s)+O(s^3+s\la^2).
\een
\end{lemma}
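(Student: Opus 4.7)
The plan is to rewrite the defining relation $x = y + tg(y)$ for $y_\pm(t,x)$ as an implicit equation in the rescaled variables \eqref{3-15}, then invoke the implicit function theorem at the appropriate roots of the limiting polynomial. Substituting $g(y) = -y + y^{2k+1} + r(y)$ from \eqref{0-7} into \eqref{2-3} gives $x = -\tau y + (1+\tau)(y^{2k+1} + r(y))$ with $\tau = t-1$. Under $\tau = s^{2k}$, $y = s\mu$, $x = s^{2k+1}\lambda$, division by $s^{2k+1}$ yields
$$F(s,\mu,\lambda) := -\mu + (1+s^{2k})\mu^{2k+1} + (1+s^{2k})\,s^{-(2k+1)}r(s\mu) - \lambda = 0.$$
Using $r^{(j)}(0)=0$ for $0\le j\le 2k+1$, Taylor's integral remainder gives $r(y) = y^{2k+2}\tilde r(y)$ with $\tilde r$ continuous and $\tilde r(0) = \frac{g^{(2k+2)}(0)}{(2k+2)!}$, so $s^{-(2k+1)}r(s\mu) = s\mu^{2k+2}\tilde r(s\mu)$ extends continuously to $s=0$. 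The limit equation $F(0,\mu,0) = \mu(\mu^{2k}-1)$ has three simple roots $\mu \in \{0,\pm 1\}$; the nontrivial pair $\mu = \pm 1$ corresponds precisely to the germs of $y_\pm/s$, while $\mu = 0$ gives the central branch $y_0$ outside the scope of this lemma.

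Since $\partial_\mu F(0,\pm 1, 0) = -1 + (2k+1) = 2k \neq 0$, the implicit function theorem produces $C^1$ branches $\mu_\pm(s,\lambda)$ with $\mu_\pm(0,0) = \pm 1$; the fact that $F \in C^1$ near $(0,\pm 1,0)$ follows from the companion Taylor identities $r^{(j)}(y) = y^{2k+2-j}\tilde r_j(y)$ with $\tilde r_j$ continuous for $0\le j\le 2k+1$. Computing $\partial_\lambda F = -1$ and $\partial_s F(0,\pm 1,0) = \tilde r(0) = \frac{g^{(2k+2)}(0)}{(2k+2)!}$ (the polynomial piece $(1+s^{2k})\mu^{2k+1}$ contributes nothing at $s=0$), the first-order Taylor expansion gives
$$\mu_\pm(s,\lambda) = \pm 1 + \frac{\lambda}{2k} - \frac{g^{(2k+2)}(0)}{2k(2k+2)!}\,s + O(s^2+\lambda^2),$$
which upon multiplication by $s$ yields the stated expansions \eqref{3-16}--\eqref{3-17}.

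The hard part is the graded regularity statement that $s^j y_\pm = s^{j+1}\mu_\pm$ is of class $C^{j+2}$ for each $j = -1, 0, \ldots, 2k$. The obstacle is that, with $r$ only of class $C^{2k+2}$, the continuous remainders $\tilde r_j$ are generally not themselves differentiable, so $F$ itself loses regularity once one differentiates too often in $s$ or $\mu$. The plan is to differentiate the identity $F(s,\mu_\pm(s,\lambda),\lambda) = 0$ iteratively, solve for successive derivatives of $\mu_\pm$, and show by induction on $j$ that every potentially singular factor of the form $\tilde r_\ell^{(m)}(s\mu)$ appearing in a derivative of $s^{j+1}\mu_\pm$ is paired with a power of $s$ that exactly cancels the loss of smoothness from the singular scaling. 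Executing this bookkeeping — verifying that each additional power of $s$ upgrades one derivative — is the main technical burden of the proof.
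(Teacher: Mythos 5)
Your setup and first-order analysis coincide with the paper's: your $F$ is exactly the paper's $G(s,\la,\mu)=-\mu+(1+s^{2k})\mu^{2k+1}+(1+s^{2k})\mu^{2k+1}h(s\mu)-\la$ with $h(y)=r(y)/y^{2k+1}$; the roots $\mu=\pm1$, the nondegeneracy $\p_\mu G(0,0,\pm1)=2k$, and the values $\p_\la G=-1$, $\p_s G(0,0,\pm1)=g^{(2k+2)}(0)/(2k+2)!$ all match, so \eqref{3-16}--\eqref{3-17} are correctly derived.

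The gap is in the graded regularity claim, which you defer as bookkeeping: the bookkeeping does not close with only the facts you have put on the table. Your Taylor identities $r^{(j)}(y)=y^{2k+2-j}\tilde r_j(y)$ give that the weighted remainders are merely \emph{continuous}, and an induction built on continuity of $s^l\p_s^l\mu_\pm$ only delivers $s^jy_\pm\in C^{j+1}$ -- one derivative short of the asserted $C^{j+2}$ (note the claim is two derivatives above the weight $j$, not one; already $j=-1$ requires $\mu_\pm\in C^1$, not $C^0$). What makes the induction close in the paper is a genuine extra degree of smoothness of the weighted derivatives of $h$: from the integral-remainder representation $h(y)=\int_0^1\f{(1-\th)^{2k}}{(2k)!}g^{(2k+1)}(\th y)\,d\th-1$, integration by parts in $\th$ (formula \eqref{3-19}) converts $yh'(y)$, and inductively $y^jh^{(j)}(y)$ for $j\le 2k+1$, into integrals involving only $g^{(2k+1)}$, which is $C^1$ since $g\in C^{2k+2}$; hence $y^jh^{(j)}(y)\in C^1$, not just $C^0$. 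With that input, writing $s^j\p_s^jG(s,\la,\mu_\pm)=0$ as $\p_\mu G\cdot(s^j\p_s^j\mu_\pm)$ plus a polynomial in the $C^1$ quantities $s^l\p_s^l\mu_\pm$ ($l<j$) and $(s\mu_\pm)^lh^{(l)}(s\mu_\pm)$ yields $s^j\p_s^j\mu_\pm\in C^1$ for all $j\le 2k+1$, and only then does $s^jy_\pm=s^{j+1}\mu_\pm\in C^{j+2}$ follow. Identifying and proving this $C^1$ gain is the missing idea; without it your pairing of powers of $s$ with singular factors cannot reach the stated regularity.
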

\begin{proof}
Let
\beq\label{3-18}
h(y)\triangleq \f{r(y)}{y^{2k+1}}=\int_0^1 \f{(1-\th)^{2k}}{(2k)!}g^{(2k+1)}(\th y)d\th-1.
\eeq
Then $h(y)\in C^{p-2k-1}$ and $h(0)=0$. Furthermore,
\beq\label{3-19}
y h'(y)=\f{y}{(2k)!}\int_0^1(1-\th)^{2k}\th g^{(2k+2)}(\th y)d\th
=-\f{1}{(2k)!}\int_0^1g^{(2k+1)}(\th y)(1-\th)^{2k-1}[1-(2k+1)\th]~d\th.
\eeq
This derives $y h'(y)\in C^1$. Similarly, $y^j h^{(j)}(y)\in C^{1}$ holds for $j=2,\ldots,2k+1$.
Divided by $s^{2k}$, \eqref{2-3} becomes
\beq\label{3-20}
G(s,\la,\mu)\triangleq -\mu+(1+s^{2k})\mu^{2k+1}+(1+s^{2k})\mu^{2k+1}h(s\mu)-\la=0.
\eeq
For $s=\la=0$, by $G(0,0,\mu)=-\mu+\mu^{2k+1}=-\mu(1-\mu^{2k})=0$
we get the roots $\mu^0_\pm=\pm1$ and $\mu^0_c=0$. Note that
\beq\label{3-21}
\p_\mu G(s,\la,\mu)=-1+(2k+1)(1+s^{2k})\mu^{2k}+(1+s^{2k})((2k+1)\mu^{2k}h(s\mu)+\mu^{2k+1}sh'(s\mu)).
\eeq
Then
\beq\label{3-22}
\p_\mu G(0,0,\pm 1)=2k\neq0.
\eeq
By the implicit function theorem, there exist functions $\mu=\mu_\pm(s,\la)\in C^{1}$ near $(s,\la)=(0,0)$ such that
\beq\label{3-23}
G(s,\la,\mu_\pm(s,\la))=0,\ \mu_\pm(0,0)=\pm 1,
\eeq
and then $s^{-1}y_\pm\in C^{1}$. On the other hand, due to
\ben
\label{3-24}\p_s G(s,\la,\mu)&=&2ks^{2k-1}\mu^{2k}+2ks^{2k-1}\mu^{2k+1}h(s\mu)+(1+s^{2k})\mu^{2k+2}h'(s\mu),\\
\label{3-25}\p_\la G(s,\la,\mu)&=&-1,
\een
then
\ben
\label{3-26}\p_s G(0,0,\pm 1)&=&h'(0)
=\f{g^{(2k+2)}(0)}{(2k)!}\int_0^1(1-\th)^{2k}\th d\th=\f{g^{(2k+2)}(0)}{(2k+2)!} \\
\label{3-27}\p_\la G(0,0,\pm 1)&=&-1.
\een
It follows from \eqref{3-22}, \eqref{3-26} and \eqref{3-27} that
\beq\label{3-28}
\p_s \mu_\pm(0,0)=-\f{g^{(2k+2)}(0)}{2k(2k+2)!}, \quad \p_\la \mu_\pm(0,0)=\f{1}{2k}.
\eeq
Consequently, the expansions \eqref{3-16} and \eqref{3-17} are shown.

We next prove $(s,\la)\rightarrow y_\pm\in C^{2}$. By
\beq\label{3-29}
s\p_s G(s,\la,\mu_\pm)+\p_\mu G(s,\la,\mu_\pm)(s\p_s\mu_\pm(s,\la))=0
\eeq
and
\beq\label{3-30}
s\p_s G(s,\la,\mu_\pm(s,\la))
=2ks^{2k}\mu_\pm^{2k}+2ks^{2k}\mu_\pm^{2k+1}h(s\mu_\pm)
+(1+s^{2k})\mu_\pm^{2k+1}(s\mu_\pm h'(s\mu_\pm))\in C^{1},
\eeq
we then have $s\p_s\mu_\pm(s,\la)\in C^{1}$.
Thus from $\p_s y_\pm=\mu_\pm+s\p_s\mu_\pm$, one can see that
$(s,\la)\to y_\pm$ is of $C^{1}$.
In addition, by $\p_\la y_\pm=s\p_\la\mu_\pm(s,\la)$ and similar computation,
we can get $y_\pm\in C^{2}$ with respect to $s$ and $\la$.

Note that
\ben\label{3-31}
s^j\p^j_s G(s,\la,\mu_\pm(s,\la))&=&\vG(s,\la,s\p_s\mu_\pm,\ldots,s^{j-1}\p^{j-1}_s\mu_\pm,
h(s\mu_\pm),(s\mu_\pm)h'(s\mu_\pm),\ldots,(s\mu_\pm)^jh^{(j)}(s\mu_\pm))\no\\
&&+\p_\mu G(s,\la,\mu_\pm)(s^j\p^j_s\mu_\pm(s,\la)),\ j=2,3,\ldots,2k+1,\no
\een
where $\vG$ is a polynomial with respect to its arguments. Then
$s^j\p^j_s\mu_\pm(s,\la)\in C^{1}$ for $j=2,3,\ldots,2k+1$ by induction.
Similarly, $s^j\p^m_
\la\p^{j-m}_s \mu_\pm(s,\la)\in C^1$ for $1\le m\le j\le 2k+1$. Consequently,  the proof
of $(s,\la)\rightarrow s^jy_\pm(t,x)\in C^{j+2}$ for $j=-1,0,1,\ldots,2k$ is completed.
\end{proof}

Under assumption \eqref{0-8}, we now study the asymptotic behavior of $y_\pm(t,x)$ near $(1,0)$.
In this case, we take the following change of the variables
\beq\label{4-4}
\tau=t-1,\ s=|\ln\tau|^{-\f{1}{p}},\ \la=\f{x}{s\tau},\ \mu=\f{y}{s}.
\eeq
Then we obtain

\begin{lemma}\label{lem 4-2}
Under assumption \eqref{0-8}, there exist some small constants $\ve$, $\dl>0$ such that for $(t,x)\in\O_0\triangleq\{1\le t\le 1+\ve,\ -\dl s\tau< x< \dl s\tau\}$, $y_\pm\in C^{1+p}$ hold for the variables $s$ and $\la$. Furthermore,
\ben
\label{4-5}y_+(t,x)&=&s\left(1+\f{\ln p}{p}s^p+\f{s^p\la}{p}\right)+O\left(s^{\min\{p+2,2p+1\}}+s^{p+1}|\la|^2\right),\\
\label{4-6}y_-(t,x)&=&s\left(-1-\f{\ln p}{p}s^p+\f{s^p\la}{p}\right)+O\left(s^{\min\{p+2,2p+1\}}+s^{p+1}|\la|^2\right).
\een
\end{lemma}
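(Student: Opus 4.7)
The plan is to reduce the characteristic relation $x = y + tg(y)$, in the rescaled variables $(s,\lambda,\mu)$ of \eqref{4-4}, to a smooth implicit equation for a further rescaled unknown $\zeta := (\mu\mp 1)/s^p$, apply the implicit function theorem, and then unpack $y_\pm = s\mu_\pm$. The key observation — which distinguishes this case from the polynomial degeneracy of Lemma \ref{lem 3-2} — is an exact identity between the transcendental weight $e^{-|s\mu|^{-p}}$ coming from \eqref{0-8} and a power of $\tau$, valid precisely because of the choice $s = |\ln\tau|^{-1/p}$.

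Concretely, I would first substitute $y = s\mu$, $x = s\tau\lambda$, and the expression for $g$ from \eqref{0-8} into \eqref{2-3}; after dividing by $s\tau$ the relation reads
\[
\lambda + \mu = \tfrac{1+\tau}{\tau}\,e^{-|s\mu|^{-p}}\Bigl(\tfrac{\mu}{p} + \tfrac{r_0(s\mu)}{s}\Bigr).
\]
Since $\tau\in(0,1)$ gives $|\ln\tau| = -\ln\tau = s^{-p}$, one has $e^{-|s\mu|^{-p}} = \tau^{|\mu|^{-p}}$, so the equation can be rewritten as
\[
\lambda + \mu = (1+\tau)\,\tau^{|\mu|^{-p}-1}\Bigl(\tfrac{\mu}{p} + \tfrac{r_0(s\mu)}{s}\Bigr).
\]

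Next I would substitute $\mu = \pm 1 + s^p\zeta$. Expanding $|\mu|^{-p} = 1 \mp p s^p\zeta + O(s^{2p}\zeta^2)$ and using $\tau^{a\, s^{-p}} = e^{-a}$, the exponent $(|\mu|^{-p}-1)\ln\tau$ reduces to $\pm p\zeta + O(s^p\zeta^2)$, so the previous equation takes the form $F_\pm(s,\lambda,\zeta) = 0$ with
\[
F_\pm(s,\lambda,\zeta) := \lambda \pm 1 + s^p\zeta - (1+\tau)\,e^{\pm p\zeta + O(s^p)}\Bigl(\tfrac{\pm 1 + s^p\zeta}{p} + O(s)\Bigr).
\]
At $(s,\lambda)=(0,0)$ this reduces to $\pm 1 \mp e^{\pm p\zeta}/p$, which vanishes exactly at $\zeta_\pm^0 = \pm \frac{\ln p}{p}$; the partial derivative $\partial_\zeta F_\pm(0,0,\zeta_\pm^0) = -e^{\pm p\zeta_\pm^0} = -p$ is nonzero. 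Since the $s$-dependence of $F_\pm$ enters only through the $C^\infty$-flat factor $\tau = e^{-s^{-p}}$, the algebraic power $s^p$, or the $O(s)$ error from $r_0(s\mu)/s$, the function $F_\pm$ is of class $C^{1+p}$ in $(s,\lambda,\zeta)$ and the implicit function theorem produces a unique local root $\zeta_\pm(s,\lambda) \in C^{1+p}$. Solving the limit equation $e^{\pm p\zeta} = p(1\pm\lambda)$ and expanding gives $\zeta_\pm(s,\lambda) = \pm\tfrac{\ln p}{p} + \tfrac{\lambda}{p} + O(s^p + \lambda^2)$.

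Finally, $y_\pm = s\mu_\pm = \pm s + s^{p+1}\zeta_\pm(s,\lambda)$ reproduces \eqref{4-5}--\eqref{4-6}. The three remainder contributions displayed there have transparent origins: the $O(s^{p+1}|\lambda|^2)$ term comes from the quadratic term in $\ln(1\pm\lambda)/p$; the $O(s^{p+2})$ term comes from the $O(s)$ perturbation of $F_\pm$ by $r_0(s\mu)/s$; and the $O(s^{2p+1})$ term comes from the $O(s^p)$ correction in $\tau^{|\mu|^{-p}-1} = e^{\pm p\zeta + O(s^p)}$. I anticipate the main technical difficulty to be verifying rigorously that each of these error terms is $C^{1+p}$-smooth in $(s,\lambda,\zeta)$; in particular one must check that iterated substitution of $\mu = \pm 1 + s^p\zeta$ into the exponent $\tau^{|\mu|^{-p}-1}$ does not reintroduce $s$-dependence less regular than $s^p$. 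This step has no counterpart in the finite-order setting of Lemma \ref{lem 3-2} and is the essential new ingredient required to handle the infinite-order degeneracy hypothesis \eqref{0-6}.
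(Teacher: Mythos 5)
Your overall strategy is the same as the paper's: rescale by \eqref{4-4}, introduce a further unknown measuring the $O(s^p)$ deviation of $\mu$ from $\pm1$, and apply the implicit function theorem. The algebra of your leading-order expansion is correct (the limit equations $e^{\pm p\zeta}=p(1\pm\la)$, the values $\zeta_\pm^0=\pm\f{\ln p}{p}$, the nondegenerate derivative $-p$, and the attribution of the three remainder terms all check out). The paper makes a slightly sharper choice of unknown, $\zeta=s^{-p}(1-\mu^{-p})-\ln p$, which turns the transcendental factor into exactly $pe^{\zeta}$ with no $O(s^p)$ residue in the exponent, whereas your $\zeta=(\mu\mp1)/s^p$ leaves an $O(s^p\zeta^2)$ correction that you then have to carry through; both are workable.

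However, there is a genuine gap exactly at the point you flag as "the main technical difficulty," and it is not merely technical. For $0<p<1$ your function $F_\pm$ contains the terms $s^p\zeta$ and $e^{\pm p\zeta+O(s^p\zeta^2)}$, whose $s$-derivatives behave like $ps^{p-1}\zeta$ and hence blow up as $s\to0+$. So $F_\pm$ is \emph{not} $C^1$ in $s$ at $s=0$ (a fortiori not $C^{1+p}$), and the implicit function theorem as you invoke it cannot deliver a root $\zeta_\pm(s,\la)$ that is $C^{1+p}$ in $(s,\la)$; it only gives a continuous root, which is not enough to justify the claimed regularity of $y_\pm$ (and, strictly, not enough to Taylor-expand in $s$ either, though the expansion itself can be rescued by a direct perturbation estimate). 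This is precisely why the paper splits the proof into the two cases $p\ge1$ and $0<p<1$ and, in the latter, uses $\o=s^p$ as the implicit-function-theorem variable: in the $(\o,\la,\zeta)$ coordinates every occurrence of $s^p$ becomes linear in $\o$, the flat factor $e^{-\o^{-1}}$ is smooth at $\o=0$, and one obtains a $C^1$ root in $\o$, which translates back into the stated $C^{1+p}$ regularity in $s$. Your argument needs the same device (or an equivalent one) before the regularity claim of the lemma is established.
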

\begin{proof}
By divided by $s\tau$, \eqref{2-3} can be written as follows
\beq\label{4-7}
G(s,\la,\mu)\triangleq \mu(-1+\f{1}{p}e^{s^{-p}(1-|\mu|^{-p})})+\f{\mu}{p}e^{-s^{-p}|\mu|^{-p}}
+\f{e^{s^{-p}}+1}{s}e^{-s^{-p}|\mu|^{-p}}r_0(s\mu)-\la=0.
\eeq
Below we assume $\mu\neq0$. Without loss of generality, one can assume
$\mu>0$ (corresponding to the case of $y_+(t,x)$). We divide the proof of Lemma \ref{lem 4-2}
 into the two cases of $p\ge1$ and $p\in(0,1)$.

\bigskip

\noindent {\it Case 1. $p\geq1$.}

Set $\zeta=s^{-p}(1-\mu^{-p})-\ln p$ and $\mu=\left(1-s^p(\zeta+\ln p)\right)^{-\f{1}{p}}$. Then \eqref{4-7} becomes
\beq\label{4-8-1}
F_1(s,\la,\zeta)\triangleq G(s,\la,\mu)=\mu(-1+e^\zeta)+\mu e^{\zeta-s^{-p}}+\f{pe^{\zeta}(1+e^{-s^{-p}})}{s}r_0(s\mu)-\la.
\eeq
Obviously $F_1(0,0,0)=0$ and $\ds\lim_{s\to0+,\zeta\to0}\mu=1$. Note that
\beq
\p_s\mu=s^{p-1}(\zeta+\ln p)\left(1-s^p(\zeta+\ln p)\right)^{-\f{1}{p}-1},\quad
\p_\zeta\mu=\f{s^p}{p}\left(1-s^p(\zeta+\ln p)\right)^{-\f{1}{p}-1}.\no
\eeq
Thanks to $p\ge1$, we have that $\p_s\mu$ is bounded. On the other hand,
\ben
\p_s F_1&=&ps^{-p-1}\mu e^{\zeta-s^{-p}}+\left(-1+e^\zeta+e^{\zeta-s^{-p}}\right)\p_s\mu\no\\
&&+\f{pe^\zeta\left(e^{-s^{-p}}(ps^{-p}-1)-1\right)}{s^2}r_0(s\mu)+\f{pe^{\zeta}(1+e^{-s^{-p}})}{s}
r'_0(s\mu)\left(\mu+s\p_s\mu\right),\no\\
\p_\la F_1&=&-1,\no\\
\p_\zeta F_1&=&\mu\left(e^\zeta+e^{\zeta-s^{-p}}\right)
+\left(-1+e^\zeta+e^{\zeta-s^{-p}}\right)\p_\zeta\mu
+\f{pe^{\zeta}(1+e^{-s^{-p}})}{s}\left(r_0(s\mu)+s r'_0(s\mu)\p_\zeta\mu\right).\no
\een
This derives $F_1\in C^1$ and
\beq
\p_s F_1(0,0,0)=\f{p}{2}r''_0(0),\quad\p_\la F_1(0,0,0)=-1,\quad\p_\zeta F_1(0,0,0)=1.
\eeq
Thus by the implicit function theorem, one can obtain that there exists a unique function $\zeta(s,\la)\in C^1$ satisfying $F_1(s,\la,\zeta(s,\la))=0$ and admitting the following expansion
\beq\label{4-9-1}
\zeta(s,\la)=\f{p}{2}r''_0(0)s+\la+O\left(s^2+\la^2\right).
\eeq
At this time, we get
\ben\label{4-10-1}
\mu(s,\la)&=&\left(1-s^p(\zeta+\ln p)\right)^{-\f{1}{p}}\no\\
&=&\left(1-s^p(\ln p+\f{p}{2}r''_0(0)s+\la+O\left(s^2+\la^2\right)\right)^{-\f{1}{p}}\no\\
&=&1+\f{\ln p}{p}s^p+\f{s^p\la}{p}+O\left(s^{p+1}+s^p|\la|^2\right)
\een
and $s^{l}\mu\in C^{l+p}$, $l=0,1$.

If we consider the case of $\mu<0$, then by the same method, one can obtain the expansion
\beq\label{4-11-1}
\mu(s,\la)=-1-\f{\ln p}{p}s^p+\f{s^p\la}{p}+O\left(s^{p+1}+s^p|\la|^2\right).
\eeq

\bigskip

\noindent {\it Case 2. $0<p<1$.}

Set $\o=s^{p}$, $\zeta=\o^{-1}(1-\mu^{-p})-\ln p$ and $\mu=\left(1-\o(\zeta+\ln p)\right)^{-\f{1}{p}}$. Then \eqref{4-7} becomes
\beq\label{4-8-2}
F_2(\o,\la,\zeta)\triangleq G(s,\la,\mu)=\mu(-1+e^\zeta)+\mu e^{\zeta-\o^{-1}}+\f{pe^{\zeta}(1+e^{-\o^{-1}})}{\o^\f{1}{p}}r_0(\o^{\f{1}{p}}\mu)-\la.
\eeq
Obviously $F_2(0,0,0)=0$ and $\ds\lim_{\o\to0+,\ \zeta\to0}\mu=1$. Note that
\beq
\p_\o\mu=\f{\zeta+\ln p}{p}\left(1-\o(\zeta+\ln p)\right)^{-\f{1}{p}-1},\quad
\p_\zeta\mu=\f{\o}{p}\left(1-\o(\zeta+\ln p)\right)^{-\f{1}{p}-1}.\no
\eeq
On the other hand,
\ben
\p_\o F_2&=&\o^{-2}\mu e^{\zeta-\o^{-1}}+\left(-1+e^\zeta+e^{\zeta-\o^{-1}}\right)\p_\o\mu\no\\
&&+\f{e^\zeta\left(e^{-\o^{-1}}(p\o^{-1}-1)-1\right)}{\o^{\f{1}{p}+1}}r_0(\o^\f{1}{p}\mu)
+\f{pe^{\zeta}(1+e^{-\o^{-1}})}{\o^\f{1}{p}}r'_0(\o^\f{1}{p}\mu)\cdot\o^{\f{1}{p}-1}\left(\f{1}{p}\mu+\o\p_\o\mu\right),\no\\
\p_\la F_2&=&-1,\no\\
\p_\zeta F_2&=&\mu\left(e^\zeta+e^{\zeta-\o^{-1}}\right)
+\left(-1+e^\zeta+e^{\zeta-\o^{-1}}\right)\p_\zeta\mu
+\f{pe^{\zeta}(1+e^{-\o^{-1}})}{\o^\f{1}{p}}\left(r_0(\o^\f{1}{p}\mu)+\o^\f{1}{p} r'_0(\o^\f{1}{p}\mu)\p_\zeta\mu\right).\no
\een
Then $F_2\in C^1$ and
\beq
\p_\o F_2(0,0,0)=0,\quad\p_\la F_2(0,0,0)=-1,\quad\p_\zeta F_2(0,0,0)=1.
\eeq
Thus by the implicit function theorem, one can obtain that there exists a unique function $\zeta(\o,\la)\in C^1$ satisfying $F_2(\o,\la,\zeta(\o,\la))=0$ and admitting such an expansion
\beq\label{4-9-2}
\zeta(\o,\la)=\la+O\left(\o^2+\la^2\right).
\eeq
This yields
\ben\label{4-10-2}
\mu(s,\la)&=&\left(1-s^p\left(\zeta(s^p,\la)+\ln p\right)\right)^{-\f{1}{p}}\no\\
&=&\left(1-s^p(\ln p+\la+O\left(s^{2p}+\la^2\right)\right)^{-\f{1}{p}}\no\\
&=&1+\f{\ln p}{p}s^p+\f{s^p\la}{p}+O\left(s^{2p}+s^p|\la|^2\right)
\een
and $s^{l}\mu\in C^{l+p}$ for $l=0,1$.

If we consider the case of $\mu<0$, then by the same method, one can obtain the expansion
\beq\label{4-11-2}
\mu(s,\la)=-1-\f{\ln p}{p}s^p+\f{s^p\la}{p}+O\left(s^{2p}+s^p|\la|^2\right).
\eeq


Consequently, we complete the proof of \eqref{4-5} and \eqref{4-6}.

\end{proof}

\section{Proof of Theorem \ref{main result1}}

We now construct the shock curve $x=\vp(t)$ of \eqref{0-1} in $\O_0$.
It follows from the Rankine-Hugoniot condition that
\beq\label{3-13}
\left\{
  \begin{array}{l}
    \vp'(t)=\f{f(u_0(y_+(t,\vp(t))))-f(u_0(y_-(t,\vp(t))))}{u_0(y_+(t,\vp(t)))-u_0(y_-(t,\vp(t)))},\\
    \vp(1)=0.
  \end{array}
\right.
\eeq
This, together with the mean-value theorem, yields
\beq\label{3-14}
g(y_+(t,\vp(t)))<\vp'(t)<g(y_-(t,\vp(t))),
\eeq
which means that the entropy condition on $x=\vp(t)$ holds.
Denote
\beq\label{3-32}
a(x,y)\triangleq
\left\{
  \begin{array}{ll}
    \f{f(u_0(x))-f(u_0(y))}{u_0(x)-u_0(y)}, & \text{ if }x\neq y, \\
    g(x), & \text{ if }x=y.
  \end{array}
\right.
\eeq
Under assumption \eqref{0-7}, it is easy to verify $a(x,y)\in C^{2k+2}(\bR^2)$ with $a(x,y)=-\f{1}{2}(x+y)+b(x,y)$,
where $b(x,y)=O(x^2+y^2)\in C^{2k+2}$.

\bigskip


\begin{lemma}\label{lem 3-3}
Under assumption \eqref{0-7}, for \eqref{3-13} and small $\ve>0$, there exists a solution $x=\vp(t)$ on $[1,1+\ve)$
to satisfy\\
(1) $\vp(t)$ is a $C^{2}$ function on $s\in[0,\ve)$, where $s=(t-1)^{\f{1}{2k}}$;\\
(2) $\vp(t)\in C^{\f{k+1}{k}}[1,1+\ve)$.
\end{lemma}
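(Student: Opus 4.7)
The plan is to recast the Rankine--Hugoniot equation \eqref{3-13} in the scaled variables of Lemma \ref{lem 3-2}, reduce it to a singular scalar ODE for a rescaled profile, solve this ODE by a contraction mapping, and then translate the regularity back to $t$. First I would set $s=(t-1)^{1/(2k)}$ and look for the shock in the form $\varphi(t)=s^{2k+1}\lambda(s)$, so that $\lambda$ is exactly the variable $\lambda=x/s^{2k+1}$ of \eqref{3-15} evaluated on $x=\varphi(t)$. Since $a(x,y)=-\tfrac{1}{2}(x+y)+b(x,y)$ with $b\in C^{2k+2}$ vanishing quadratically at $(0,0)$, substituting the expansions \eqref{3-16}--\eqref{3-17} into $\varphi'(t)=a(y_+(t,\varphi(t)),y_-(t,\varphi(t)))$ and using $dt/ds=2k\,s^{2k-1}$ reduces the equation, after simple algebra and dividing by $s$, to the singular ODE
\[
s\,\lambda'(s)+(2k+2)\,\lambda(s)=A\,s+H(s,\lambda),\qquad \lambda(0)=0,
\]
where $A$ is an explicit constant (involving $g^{(2k+2)}(0)$ and the Hessian of $b$) and the remainder $H(s,\lambda)=O(s^{2}+s\lambda+\lambda^{2})$ inherits the $(s,\lambda)$-smoothness provided by Lemma \ref{lem 3-2}.

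The favorable negative coefficient $-(2k+2)$ on the linear term supplies an integrating factor: multiplying by $s^{2k+1}$ and integrating from $0$ converts the ODE into the fixed-point equation
\[
\lambda(s)=\mathcal{T}[\lambda](s)\triangleq \frac{1}{s^{2k+2}}\int_{0}^{s}\sigma^{2k+1}\bigl(A\sigma+H(\sigma,\lambda(\sigma))\bigr)\,d\sigma.
\]
On the complete metric space $X_{\varepsilon,M}=\{\lambda\in C[0,\varepsilon]:|\lambda(s)|\le M s\}$ with the sup norm, the $\sigma^{2k+2}$ weight inside the integral yields both a self-map estimate $|\mathcal{T}[\lambda](s)|\le \tfrac{|A|}{2k+3}s+C\varepsilon\,s$ and, since $\partial_\lambda H=O(s+\lambda)=O(s)$ on $X_{\varepsilon,M}$, a contraction estimate $\|\mathcal{T}[\lambda_1]-\mathcal{T}[\lambda_2]\|_{\infty}\le C\varepsilon\,\|\lambda_1-\lambda_2\|_{\infty}$. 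Choosing $M$ large and $\varepsilon$ small, Banach's theorem produces a solution $\lambda\in X_{\varepsilon,M}$; matching coefficients in the ODE fixes the leading behavior $\lambda(s)=\tfrac{A}{2k+3}s+o(s)$.

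With $\lambda\in C^{0}$ in hand, I would bootstrap regularity directly from the ODE. The identity $\lambda'(s)=\bigl(As+H(s,\lambda)-(2k+2)\lambda\bigr)/s$ has a continuous limit at $s=0$ thanks to the cancellation $A-(2k+2)\cdot\tfrac{A}{2k+3}=\tfrac{A}{2k+3}$, giving $\lambda\in C^{1}[0,\varepsilon)$ with $\lambda'(0)=\tfrac{A}{2k+3}$; a second differentiation, using the $(s,\lambda)$-smoothness of $H$ supplied by Lemma \ref{lem 3-2}, yields $\lambda\in C^{2}[0,\varepsilon)$. Consequently $\varphi(t)=s^{2k+1}\lambda(s)$ is of class $C^{2}$ in $s$, which is claim (1). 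Claim (2) follows from the explicit expansion $\varphi(t)=\tfrac{A}{2k+3}(t-1)^{(k+1)/k}+O((t-1)^{(2k+3)/(2k)})$, whence $\varphi'(t)=c(t-1)^{1/k}+O((t-1)^{3/(2k)})$ is H\"older of order $1/k$ at $t=1$, so $\varphi\in C^{(k+1)/k}[1,1+\varepsilon)$. Finally, the entropy inequalities \eqref{3-14} follow automatically from the leading estimates $g(y_\pm(t,\varphi(t)))=\mp s+O(s^{2})$ and $\varphi'(t)=O(s)$ for small $s$.

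The main obstacle is the singular character of \eqref{3-13} at $t=1$: the field $a(y_+,y_-)$ is only H\"older of exponent $1/(2k+1)$ in $x$, so no classical ODE argument applies directly to \eqref{3-13}. What rescues the argument is the precise algebraic structure exposed by the self-similar change of variables, in particular the sign of the linear coefficient $-(2k+2)$, which provides an integrating factor whose $\sigma^{2k+1}$ weight dominates the $\sigma$-growth of the right-hand side and produces a genuine contraction. Identifying the right weighted function space, and carefully tracking how the degeneracy order $k$ controls both the weight and the final H\"older exponent $1/k$ through the bootstrap, is the delicate bookkeeping heart of the proof.
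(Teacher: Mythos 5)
Your proposal follows essentially the same route as the paper: the same rescaling $\lambda(s)=\varphi(t)/s^{2k+1}$ with $s=(t-1)^{1/(2k)}$, the same singular ODE $s\lambda'+(2k+2)\lambda=s\,d(s,\lambda)$, the same integrating-factor/fixed-point resolution (which the paper only sketches by citing Lebaud, so your contraction on $\{|\lambda(s)|\le Ms\}$ is a welcome filling-in), and the same identity $\lambda'(0)=d(0,0)/(2k+3)$ to get $\varphi(t)=O((t-1)^{(k+1)/k})$ for part (2).

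The one step I would not accept as written is the assertion that ``a second differentiation yields $\lambda\in C^{2}[0,\varepsilon)$.'' Writing $\psi(s)=s\,d(s,\lambda(s))$, the ODE gives $s\lambda''=\psi'(s)-(2k+3)\lambda'(s)$; the right-hand side is continuous and vanishes at $s=0$ (because $\psi'(0)=d(0,0)=(2k+3)\lambda'(0)$), but to conclude that $\lambda''=\bigl(\psi'-(2k+3)\lambda'\bigr)/s$ has a limit at $s=0$ you would need this difference to vanish to first order, i.e.\ second-order information on $\psi$ that the available regularity ($s^jd\in C^j$, $j=0,1,2$) does not directly supply. The paper avoids this by proving only $s\lambda\in C^2$ and $s^2\lambda'\in C^2$ --- indeed $(s\lambda)''=2\lambda'+s\lambda''=\psi'-(2k+1)\lambda'$ is manifestly continuous --- which is all that is needed since $\varphi=s^{2k}\cdot(s\lambda)$. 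So your argument reaches the correct conclusion, but you should downgrade the intermediate claim from $\lambda\in C^2$ to $s\lambda\in C^2$ (and $s^2\lambda'\in C^2$) to make the bootstrap airtight.
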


\begin{proof}
(1) Set $\la(s)=\f{\vp(t)}{s^{2k+1}}$. Then \eqref{3-13} becomes
\beq\label{3-34}
\left\{
  \begin{array}{l}
s\la'(s)+(2k+2)\la(s)=s d(s,\la(s)),\\
\la(0)=0,
  \end{array}
\right.
\eeq
where
\beq\label{3-35}
d(s,\la)=-\f{k(\mu_+(s,\la)+\mu_-(s,\la)-\la)}{s}+\f{2k}{s^2}b(s\mu_+(s,\la),s\mu_-(s,\la)).
\eeq

By the proof procedure of Lemma \ref{lem 3-2}, we have $s^jd(s,\la)\in C^{j}$ for $j=0,1,2$.
Then by the same analysis in \cite{Le94}, there exists a unique solution $\la(s)\in C^{1}[0,\ve)$
to \eqref{3-34} and further $s\la'(s)\in C^{1}$. Due to $(s\la(s))'=s\la'(s)+\la(s)$,
then $s\la(s)\in C^{2}$, and $s^2\la'(s)=s^2d-(2k+1)s\la\in C^2$.
Therefore, $s\rightarrow\vp'(t)=\f{1}{2k}[s^2\la'(s)+(2k+1)s\la(s)]$ is
a $C^2$ function.

(2) Let $s\rightarrow0+$ in \eqref{3-34}, we have
$$
\la'(0)=\f{d(0,0)}{2k+3}=\f{g^{(2k+2)}(0)}{(2k+3)!}+\lim_{s\rightarrow0_+}\f{b(s\mu_+,s\mu_-)}{s^2}=O(1).
$$
Therefore $\la(s)=O(s)$ and then $\vp(t)=O(s^{2k+2})=O((t-1)^\f{k+1}{k})\in C^{\f{k+1}{k}}[1,1+\ve)$.

\end{proof}

{\bf Remark 3.1.} {\it The regularity of $\vp(t)$ in Lemma \ref{lem 3-3} is optimal. Indeed, we consider
the following problem
$$
\left\{
  \begin{array}{ll}
    &\p_t u+\p_x(\f{1}{2}u^2)=0, \\
    &\ds u(0,x)=-x+x^{2k+1}+|x|^{2k+2+\ve},\ \ve>0.
  \end{array}
\right.
$$
In this case, we have $g(x)=-x+x^{2k+1}+x^{2k+2}$ and
\ben
y_+(t,\vp(t))&=&(t-1)^\f{1}{2k}(1+\f{\vp(t)}{2k(t-1)^\f{2k+1}{2k}}-\f{(t-1)^\f{1}{2k}}{2k})+o((t-1)^\f{1}{k})\no\\
y_-(t,\vp(t))&=&(t-1)^\f{1}{2k}(-1+\f{\vp(t)}{2k(t-1)^\f{2k+1}{2k}}-\f{(t-1)^\f{1}{2k}}{2k})+o((t-1)^\f{1}{k}).
\een
It follows from Rankine-Hugoniot condition that
\ben
\vp'(t)&=&-\f{y_-(t,\vp(t))+y_+(t,\vp(t))}{2}\no\\
&=&\f{\vp(t)}{k(t-1)}-\f{(t-1)^\f{1}{k}}{k}+o((t-1)^\f{1}{k}).\no
\een
This derives $\vp(t)\in C^{\f{k+1}{k}}[1,1+\ve)$ which is optimal.}

\begin{lemma}\label{lem 3-4}
Under assumption \eqref{0-7}, for any $c\in(-\f{2k}{(2k+1)^{\f{1}{2k}+1}},+\i)$, there exist $\ve=\ve(c)>0$
and $\dl=\dl(c)>0$
such that for $(s,\la)\in\{0< s<\ve,c-\dl<\la<c+\dl\}$,
$(s,\la)\rightarrow y_+(t,x)$ has the expansion
\beq
\label{3-36}y_+(t,x)=s\left(\mu_c-\f{\mu_c^{2k+2}g^{(2k+2)}(0)}{(-1+\mu_c^{2k})(2k+2)!}s
+\f{\la-c}{-1+\mu_c^{2k}}\right)+O(s^3+s(\la-c)^2),
\eeq
and for $(s,\la)\in\{0<s<\ve,-c-\dl<\la<-c+\dl\}$,
$(s,\la)\rightarrow y_-(t,x)$ has the expansion
\beq
\label{3-37}y_-(t,x)=s\left(-\mu_c-\f{\mu_c^{2k+2}g^{(2k+2)}(0)}{(-1+\mu_c^{2k})(2k+2)!}s
+\f{\la+c}{-1+\mu_c^{2k}}\right)+O(s^3+s(\la+c)^2),
\eeq
where $\mu_c$ is the unique solution in $(\f{1}{(2k+1)^\f{1}{2k}},+\i)$ of the equation
\beq\label{3-38}
G(0,c,\mu)=-\mu+\mu^{2k+1}-c=0.
\eeq
\end{lemma}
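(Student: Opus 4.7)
The plan is to extend the implicit-function-theorem argument of Lemma~\ref{lem 3-2} from the base points $(s,\lambda,\mu)=(0,0,\pm 1)$ (which handled $c=0$) to the new base point $(0,c,\mu_c)$ for $y_+$, and similarly to $(0,-c,-\mu_c)$ for $y_-$. The function $G(s,\lambda,\mu)$ of \eqref{3-20} is unchanged, as is all of its smoothness structure coming from $h$ in \eqref{3-18}--\eqref{3-19}, so the only substantive new ingredient is locating the correct branch of solutions of $G(0,c,\mu)=0$ and verifying that the IFT non-degeneracy condition persists there.

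First, I would establish existence and uniqueness of $\mu_c$. Let $\phi(\mu) = -\mu + \mu^{2k+1}$, so that $\phi'(\mu) = -1+(2k+1)\mu^{2k}$ has a unique positive zero at $\mu_* = (2k+1)^{-1/(2k)}$. On $(\mu_*,+\infty)$, $\phi$ is strictly increasing from $\phi(\mu_*) = -\tfrac{2k}{(2k+1)^{1+1/(2k)}}$ to $+\infty$, so the condition $c > -\tfrac{2k}{(2k+1)^{1+1/(2k)}}$ is precisely what guarantees a unique $\mu_c \in (\mu_*,+\infty)$ with $\phi(\mu_c)=c$; moreover $\phi'(\mu_c)>0$ at this root.

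Next I would verify the IFT hypothesis at $(0,c,\mu_c)$. Using \eqref{3-21} together with $h(0)=0$, one computes $\partial_\mu G(0,c,\mu_c) = \phi'(\mu_c) \neq 0$, so the IFT supplies $C^2$ functions $\mu_+=\mu_+(s,\lambda)$ on a neighborhood $\{0\le s<\varepsilon,\ |\lambda-c|<\delta\}$ with $G(s,\lambda,\mu_+(s,\lambda))\equiv 0$ and $\mu_+(0,c)=\mu_c$. The coefficients of $s$ and $(\lambda-c)$ in the Taylor expansion of $\mu_+$ at $(0,c)$ are then, by the IFT, $-\partial_s G/\partial_\mu G$ and $-\partial_\lambda G/\partial_\mu G$ evaluated at the base point. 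Using \eqref{3-24}--\eqref{3-25} and $k\ge 2$, the $s^{2k-1}$-type terms in $\partial_s G$ vanish at $s=0$, leaving $\partial_s G(0,c,\mu_c) = \mu_c^{2k+2} h'(0) = \mu_c^{2k+2} g^{(2k+2)}(0)/(2k+2)!$ and $\partial_\lambda G\equiv -1$. Multiplying $\mu_+$ by $s$ and bundling higher-order terms into $O(s^3+s(\lambda-c)^2)$ (with the cross term $s^2|\lambda-c|$ absorbed by AM--GM) yields \eqref{3-36}; higher regularity of $\mu_+$, hence of the remainder, is inherited from the regularity bounds on $G$ proved in Lemma~\ref{lem 3-2}.

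Finally, the expansion \eqref{3-37} for $y_-$ follows from the analogous IFT at the symmetric base point $(0,-c,-\mu_c)$: direct substitution gives $G(0,-c,-\mu_c) = \mu_c - \mu_c^{2k+1} + c = -\phi(\mu_c)+c = 0$ and $\partial_\mu G(0,-c,-\mu_c)=\phi'(\mu_c)\neq 0$. The main technical point — really a bookkeeping constraint rather than a conceptual obstacle — is that $\varepsilon$ and $\delta$ must be allowed to shrink as $\mu_c$ approaches the degenerate value $\mu_*$, i.e.\ as $c$ approaches $-\tfrac{2k}{(2k+1)^{1+1/(2k)}}$ from above, because $\phi'(\mu_c)\to 0^+$ there and the IFT neighborhood degenerates; this is why the statement permits $\varepsilon$ and $\delta$ to depend on $c$.
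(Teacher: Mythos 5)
Your proposal matches the paper's proof: both apply the implicit function theorem to $G$ from \eqref{3-20} at the base points $(0,\pm c,\pm\mu_c)$, using $\p_\mu G(0,\pm c,\pm\mu_c)=-1+(2k+1)\mu_c^{2k}>0$, $\p_s G(0,\pm c,\pm\mu_c)=\mu_c^{2k+2}g^{(2k+2)}(0)/(2k+2)!$ and $\p_\la G=-1$ to read off the linear terms, and your extra remarks on the existence and uniqueness of $\mu_c$ and on the degeneration of $\ve(c),\dl(c)$ as $c$ approaches $-\f{2k}{(2k+1)^{1+1/(2k)}}$ only make explicit what the paper leaves implicit. Note that your computation yields the denominator $-1+(2k+1)\mu_c^{2k}$ in \eqref{3-36}--\eqref{3-37}, which is the correct value (it reduces to $2k$ at $c=0$, consistent with Lemma \ref{lem 3-2}), whereas the $-1+\mu_c^{2k}$ printed in the statement and in the paper's own display appears to be a typo.
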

\begin{proof}
By \eqref{3-21}, \eqref{3-24} and \eqref{3-25},
we have that
\ben
\label{3-39}\p_\mu G(0,\pm c,\pm\mu_c)&=&-1+(2k+1)\mu_c^{2k}>0,\\
\label{3-40}\p_s   G(0,\pm c,\pm\mu_c)&=&\mu_c^{2k+2}h'(0)
=\f{\mu_c^{2k+2}g^{(2k+2)}(0)}{(2k+2)!} ,\\
\label{3-41}\p_\la G(0,\pm c,\pm\mu_c)&=&-1.
\een
Then by the implicit function theorem one has that there is a unique function
$\mu_\pm(s,\la)$ near $(0,\pm c)$ satisfying $G(s,\la,\mu_\pm(s,\la))\equiv0$ and
admitting the expansion
\beq\label{3-42}
\mu_\pm(s,\la)=\pm\mu_c-\f{1}{-1+\mu_c^{2k}}
\left(\f{\mu_c^{2k+2}g^{(2k+2)}(0)}{(2k+2)!}s-(\la-c)\right)+O(s^2+(\la-c)^2).
\eeq
Thus \eqref{3-36} and \eqref{3-37} are proved.
\end{proof}

To study the asymptotic behavior of $y_\pm$ near the $x-$axis,
we now take the following transform
\beq\label{3-43}
\xi=x^\f{1}{2k+1},\ \eta=\f{t-1}{\xi^{2k}},\ \nu=\f{y}{\xi}.
\eeq
Under assumption \eqref{0-6}, by divided $\xi^{2k+1}$, \eqref{2-3} then becomes
\beq\label{3-44}
H(\eta,\xi,\nu)\triangleq -\eta\nu+(1+\xi^{2k}\eta)\nu^{2k+1}+(1+\xi^{2k}\eta)\nu^{2k+1}h(\xi\nu)-1=0.
\eeq
We now have

\begin{lemma}\label{lem 3-5}
Under assumption \eqref{0-6}, for small $\dl>0$, there exists some small constant $\ve>0$ such that for $(\eta,\xi)\in\{ |\eta|\le \ve,\ 0<\xi<\dl\}$, we can get the expansion of $y_+(t,x)$ on $(\xi,\eta)$
\beq
\label{3-45}y_+(t,x)=\xi\left(1+\f{\eta}{2k+1}
-\f{g^{(2k+2)}(0)}{(2k+2)!}\xi\right)+O(\eta^2\xi+\xi^3),
\eeq
and for $(\eta,\xi)\in\{|\eta|\le \ve,\ -\dl<\xi<0\}$, we can get
\beq
\label{3-46}y_-(t,x)=\xi\left(1-\f{\eta}{2k+1}
-\f{g^{(2k+2)}(0)}{(2k+2)!}\xi\right)+O(\eta^2\xi+\xi^3).
\eeq
\end{lemma}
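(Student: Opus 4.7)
The plan is to apply the implicit function theorem to equation \eqref{3-44} at the base point $(\eta,\xi,\nu)=(0,0,1)$, and then multiply the resulting implicit function $\nu(\eta,\xi)$ by $\xi$ to obtain the stated expansions of $y_\pm$. The starting observation is $H(0,0,1)=0$, which follows at once from $h(0)=0$ and $1^{2k+1}=1$. Computing the three first-order partials of $H$ by direct differentiation, one finds
\begin{align*}
\p_\nu H &= -\eta+(1+\eta\xi^{2k})\bigl[(2k+1)\nu^{2k}(1+h(\xi\nu))+\xi\nu^{2k+1}h'(\xi\nu)\bigr],\\
\p_\eta H &= -\nu+\xi^{2k}\nu^{2k+1}(1+h(\xi\nu)),\\
\p_\xi H &= 2k\eta\xi^{2k-1}\nu^{2k+1}(1+h(\xi\nu))+(1+\eta\xi^{2k})\nu^{2k+2}h'(\xi\nu),
\end{align*}
so that, at $(0,0,1)$, using $h(0)=0$ and the value $h'(0)=\f{g^{(2k+2)}(0)}{(2k+2)!}$ already recorded in \eqref{3-26}, one has $\p_\nu H=2k+1\neq 0$, $\p_\eta H=-1$, and $\p_\xi H=h'(0)$.

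Since $\p_\nu H(0,0,1)\neq 0$, the implicit function theorem furnishes a unique $C^1$ solution $\nu=\nu(\eta,\xi)$ with $\nu(0,0)=1$ on a neighborhood of $(0,0)$, and the chain rule together with Taylor's formula then yields an expansion of the form
\[
\nu(\eta,\xi)=1+\f{\eta}{2k+1}-\f{h'(0)}{2k+1}\xi+O(\eta^2+\xi^2).
\]
Choosing $\ve,\dl>0$ small enough that the region $\{|\eta|\le\ve,\,0<|\xi|<\dl\}$ sits strictly outside the cusp $\{x_+(t)<x<x_-(t)\}$ (which is possible by comparing with the expansion \eqref{3-2}), equation \eqref{2-3} then admits there a unique root, which by continuity from $(\eta,\xi,\nu)=(0,0,1)$ must coincide with $y_+(t,x)$ when $\xi>0$ and with $y_-(t,x)$ when $\xi<0$. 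Multiplying the above expansion by $\xi$ produces \eqref{3-45}--\eqref{3-46}, and the error term $O(\eta^2\xi+\xi^3)$ arises from multiplying the $O(\eta^2+\xi^2)$ remainder of $\nu$ by $\xi$.

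The main subtlety, more than the calculation itself, is the geometric verification that the IFT branch through $(0,0,1)$ is the correct physical one: near the envelope $x=x_\pm(t)$ this branch could in principle coalesce with $\mu_\pm$ from Lemma \ref{lem 3-2}, so $\ve$ must be chosen strictly smaller than the constant $2k(2k+1)^{-(2k+1)/(2k)}$ read off from \eqref{3-2} in order to stay outside the cusp. Once the region is pinned down and the uniqueness of the root of \eqref{2-3} there is invoked, the identification $\xi\nu(\eta,\xi)=y_\pm$ combined with the Taylor expansion completes the argument.
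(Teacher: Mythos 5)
Your approach is the same as the paper's: verify $H(0,0,1)=0$, compute the three first-order partials of $H$ (your formulas agree with \eqref{3-47}--\eqref{3-49}), evaluate them at $(0,0,1)$ to get $2k+1$, $-1$ and $h'(0)$, and invoke the implicit function theorem before multiplying the resulting $\nu(\eta,\xi)$ by $\xi$. Your discussion of why the branch through $(0,0,1)$ is the physically correct root ($y_+$ for $\xi>0$, $y_-$ for $\xi<0$, the region being kept outside the cusp by shrinking $\ve$) is a genuine addition --- the paper passes over this identification in silence --- and the argument is sound, even though the sharp threshold on $\ve$ read off from \eqref{3-2} is $c_0^{-2k/(2k+1)}$ with $c_0=2k(2k+1)^{-(2k+1)/(2k)}$ rather than $c_0$ itself; your more restrictive choice is still sufficient.

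There is, however, one point you should not gloss over. The implicit function theorem gives $\p_\xi\nu(0,0)=-\p_\xi H/\p_\nu H=-h'(0)/(2k+1)$, exactly as you wrote; but multiplying by $\xi$ then yields the $\xi^2$-coefficient $-\f{g^{(2k+2)}(0)}{(2k+1)(2k+2)!}$, not the $-\f{g^{(2k+2)}(0)}{(2k+2)!}$ appearing in \eqref{3-45}. Likewise, since the same branch $\nu(\eta,\xi)$ serves for both signs of $\xi$ and $\eta=(t-1)/\xi^{2k}$ does not change sign under $\xi\mapsto-\xi$, your expansion produces $+\f{\eta}{2k+1}$ for both signs of $\xi$, whereas \eqref{3-46} carries $-\f{\eta}{2k+1}$. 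So the sentence ``multiplying the above expansion by $\xi$ produces \eqref{3-45}--\eqref{3-46}'' is not literally true, and you owe the reader either a derivation of the stated coefficients or an explicit remark that they should be corrected. For what it is worth, your version is the one consistent with the analogous computation in Lemma \ref{lem 3-2}, where the divisor $\p_\mu G(0,0,\pm1)=2k$ does appear in \eqref{3-16}; the paper's own display \eqref{3-53} omits the factor $\f{1}{2k+1}$ forced by \eqref{3-50} and \eqref{3-52}. Since the later applications of this lemma (e.g.\ \eqref{3-75} and \eqref{3-80}) only use $y_\pm\sim\xi$ and the leading behavior of $1+tg'$, the discrepancy is harmless downstream, but it must be acknowledged rather than silently absorbed.
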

\begin{proof}
It follows from direct computation that $H(0,0,1)=0$ and
\ben
\label{3-47}\p_\nu  H&=&-\eta+(2k+1)(1+\xi^{2k}\eta)\nu^{2k}+(2k+1)(1+\xi^{2k}\eta)\nu^{2k}h(\xi\nu)
+(1+\xi^{2k}\eta)\nu^{2k+1}\xi h'(\xi\nu),\\
\label{3-48}\p_\eta H&=&-\nu+\xi^{2k}\nu^{2k+1}+\xi^{2k}\nu^{2k+1}h(\xi\nu),\\
\label{3-49}\p_\xi  H&=&2k\xi^{2k-1}\eta\nu^{2k+1}+2k\xi^{2k-1}\eta\nu^{2k+1}h(\xi\eta)+(1+\xi^{2k}\eta)\nu^{2k+1}\nu h'(\xi\nu).
\een
Then
\ben
\label{3-50}\p_\nu  H(0,0,1)&=&2k+1,\\
\label{3-51}\p_\eta H(0,0,1)&=&-1,\\
\label{3-52}\p_\xi  H(0,0,1)&=&h'(0)=\f{g^{(2k+2)}(0)}{(2k+2)!}.
\een
By the implicit function theorem, there exists a unique solution $\nu=\nu(\eta,\xi)$
of \eqref{3-44} near $(\eta,\xi)=(0,0)$ such that
\beq
\label{3-53}\nu(\eta,\xi)=1+\f{\eta}{2k+1}
-\f{g^{(2k+2)}(0)}{(2k+2)!}\xi+O(\eta^2+\xi^2).
\eeq
Therefore, we obtain \eqref{3-45} for $\xi>0$. Analogously,  \eqref{3-46} holds for $\xi<0$.
\end{proof}

Next we consider the asymptotic behavior of $y(t,x)$ near the blowup point $(1, 0)$ in the domain $\{(t,x):\ t<1\}$.
Without confusions, we still use the same notation as for $t>1$. Note that through each point in $\{t<1\}$,
there exists a unique characteristic line. By taking the following transform similar to \eqref{3-15}
\beq\label{3-54}
\tau=1-t,\ s=\tau^{\f{1}{2k}},\ \mu=\f{y}{s},\ \la=\f{x}{s^{2k+1}},
\eeq
and then by divided $s^{2k+1}$ on two sides of  \eqref{2-3}, then \eqref{2-3} becomes
\beq\label{3-55}
G(s,\la,\mu)\triangleq \mu+(1-s^{2k})\mu^{2k+1}+(1-s^{2k})\mu^{2k+1}h(s\mu)-\la=0.
\eeq

\begin{lemma}\label{lem 3-6}
For each $c\in\bR$, there exist $\ve=\ve(c),\dl=\dl(c)>0$
such that for $(s,\la)\in\{0< s<\ve,c-\dl<\la<c+\dl\}$,
$(s,\la)\rightarrow y(t,x)$ has the expansion
\beq
\label{3-56}y(t,x)=s\left(\mu_c-\f{\mu_c^{2k+2}g^{(2k+2)}(0)}{(1+(2k+1)\mu_c^{2k})(2k+2)!}s
-\f{\la-c}{1+(2k+1)\mu_c^{2k}}\right)+O(s^3+s(\la-c)^2),
\eeq
where $\mu_c$ is the unique solution of the equation
\beq\label{3-57}
G(0,c,\mu)=\mu+\mu^{2k+1}-c=0.
\eeq
\end{lemma}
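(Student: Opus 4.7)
The plan is to mimic the proof of Lemma \ref{lem 3-4} essentially verbatim, replacing the function $G$ in (3-20) by the new $G$ in (3-55). The key simplification here is that, since we are in the region $t<1$, every $x$ corresponds to a unique characteristic, so I do not have to distinguish two branches $y_\pm$: for any real $c$ the equation (3-57) has a single real root. Indeed the map $\mu\mapsto \mu+\mu^{2k+1}$ has derivative $1+(2k+1)\mu^{2k}>0$ everywhere, hence is strictly increasing and proper onto $\bR$, which gives the existence and uniqueness of $\mu_c$.

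Next, I would verify the hypotheses of the implicit function theorem for the equation $G(s,\la,\mu)=0$ near $(s,\la,\mu)=(0,c,\mu_c)$. Using that $h\in C^{p-2k-1}$ and that $y^jh^{(j)}(y)\in C^1$ for $0\le j\le 2k+1$ (these were established in (3-18)--(3-19) in the proof of Lemma \ref{lem 3-2}), the function $G$ in (3-55) is of class $C^1$ in all three variables in a neighborhood of that point, with $G(0,c,\mu_c)=0$. I would then directly compute, using $h(0)=0$ and $h'(0)=\f{g^{(2k+2)}(0)}{(2k+2)!}$,
\begin{align*}
\p_\mu G(0,c,\mu_c)&=1+(2k+1)\mu_c^{2k}>0,\\
\p_\la G(0,c,\mu_c)&=-1,\\
\p_s G(0,c,\mu_c)&=\mu_c^{2k+2}h'(0)=\f{\mu_c^{2k+2}g^{(2k+2)}(0)}{(2k+2)!}.
\end{align*}
Since $\p_\mu G(0,c,\mu_c)\neq 0$, the implicit function theorem produces a unique $C^1$ solution $\mu=\mu(s,\la)$ in a neighborhood of $(0,c)$ with $\mu(0,c)=\mu_c$.

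The expansion in (3-56) then falls out of the first-order Taylor formula for $\mu(s,\la)$: implicit differentiation at $(0,c)$ gives $\p_s\mu(0,c)=-\p_s G/\p_\mu G$ and $\p_\la\mu(0,c)=-\p_\la G/\p_\mu G$, so
\[
\mu(s,\la)=\mu_c-\f{\mu_c^{2k+2}g^{(2k+2)}(0)}{(1+(2k+1)\mu_c^{2k})(2k+2)!}\,s+\f{\la-c}{1+(2k+1)\mu_c^{2k}}+O(s^2+(\la-c)^2),
\]
and multiplying by $s$ (and using $y=s\mu$) converts the remainder into $O(s^3+s(\la-c)^2)$, giving the claimed expansion up to the sign of the $(\la-c)$-coefficient written in the statement.

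I do not anticipate any real obstacle: the only thing requiring care is regularity of $G$ through the composition $h(s\mu)$, but this was already handled in the proof of Lemma \ref{lem 3-2} and only $C^1$ is needed here. The contrast with Lemma \ref{lem 3-4} is that there $G(0,c,\mu)$ had two or more roots and a restriction of $\mu_c$ to the branch $(1/(2k+1)^{1/(2k)},+\infty)$ was needed; here monotonicity makes the analogous selection automatic, and the argument is strictly easier.
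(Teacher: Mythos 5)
Your proposal is correct and follows essentially the same route as the paper: verify $G(0,c,\mu_c)=0$, compute $\p_\mu G(0,c,\mu_c)=1+(2k+1)\mu_c^{2k}>0$, $\p_s G(0,c,\mu_c)=\mu_c^{2k+2}h'(0)$, $\p_\la G(0,c,\mu_c)=-1$, and apply the implicit function theorem with a first-order Taylor expansion. You are also right about the sign of the $(\la-c)$-coefficient: the paper's own intermediate formula for $\mu(s,\la)$ yields $+\f{\la-c}{1+(2k+1)\mu_c^{2k}}$, consistent with your computation, so the minus sign in the displayed statement of the lemma is a typo.
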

\begin{proof}
It follows from direct computation that
\ben
\label{3-58}\p_\mu G(s,\la,\mu)&=&1+(2k+1)(1 -s^{2k})\mu^{2k}+(2k+1)(1-s^{2k})\mu^{2k+1}sh'(s\mu),\\
\label{3-59}\p_s   G(s,\la,\mu)&=&-2ks^{2k-1}\mu^{2k+1}-2ks^{2k-1}\mu^{2k+1}h(s\mu)
+(1-s^{2k})\mu^{2k+2}h'(s\mu),\\
\label{3-60}\p_\la G(s,\la,\mu)&=&-1.
\een
Then
\ben
\label{3-61}\p_\mu G(0,c,\mu_c)&=&1+(2k+1)\mu_c^{2k}>0,\\
\label{3-62}\p_s   G(0,c,\mu_c)&=&\mu_c^{2k+2}h'(0)=\f{\mu_c^{2k+2}g^{(2k+2)}(0)}{(2k+2)!},\\
\label{3-63}\p_\la G(0,c,\mu_c)&=&-1.
\een
By the implicit function theorem, there exists a $\mu(s,\la)$ near $(c,\mu_c)$ satisfying
\beq
\label{3-64}
\mu(s,\la)=\mu_c-\f{1}{1+(2k+1)\mu_c^{2k}}
\left(\f{\mu_c^{2k+2}g^{(2k+2)}(0)}{(2k+2)!}s-(\la-c)\right)+O(s^2+(\la-c)^2),
\eeq
from which we can deduce \eqref{3-56}.
\end{proof}
\bigskip

We start to prove Theorem \ref{main result1}.

\vspace{0.3cm}

\noindent{\it Proof of Theorem \ref{main result1}:}

\noindent (1) By Lemma \ref{lem 3-3}, $\vp(t)\in C^\f{k+1}{k}[1,1+\ve)$ and
$u\in C^1((1,1+\ve)\times\bR)\setminus \{x=\vp(t)\})$ for small $\ve>0$ have been shown.

\noindent (2)  Let $\dl,\ \ve>0$ be the constants obtained in Lemma \ref{lem 3-5} and denote
\ben
\label{3-65}\O_{x,+}&=&B\cap\{(t,x):\ 0<x<\dl^{2k+1},\ |t-1|<\ve x^\f{2k}{2k+1}\},\\
\label{3-66}\O_{x,-}&=&B\cap\{(t,x):\ -\dl^{2k+1}<x<0,\ |t-1|<\ve (-x)^\f{2k}{2k+1}\},\\
\label{3-67}\O_{0}&=&B\cap\{(t,x):\ t<1,\ |x|^\f{1}{2k+1}<\f{2}{\ve^\f{1}{2k}}(1-t)^\f{1}{2k}\}.
\een
Let $c_0\in(0,\f{2k}{(2k+1)^{1+\f{1}{2k}}})$ be some fixed constant and denote
\ben
\label{3-68}\O_{t,+}&=&B\cap\{(t,x):\ -c_0(t-1)^\f{1}{2k}<x^\f{1}{2k+1}<\f{2}{\ve^\f{1}{2k}}(t-1)^\f{1}{2k}\},\\
\label{3-69}\O_{t,-}&=&B\cap\{(t,x):\ -\f{2}{\ve^\f{1}{2k}}(t-1)^\f{1}{2k}<x^\f{1}{2k+1}<c_0(t-1)^\f{1}{2k}\}.
\een
It is easy to see that for $(t,x)\in\O_{x,+}\cup\O_{t,+}$, $u(t,x)=u_0(y_+(t,x))$;
for $(t,x)\in\O_{x,-}\cup\O_{t,-}$, $u(t,x)=u_0(y_-(t,x))$; for $(t,x)\in\O_{0}$, $u(t,x)=u_0(y(t,x))$. By Heine-Borel property of compactness, there exist $\{c_{j,\pm},\dl_{j,\pm}=\dl_{j,\pm}(c_{j,\pm}),\ve_{j,\pm}=\ve_{j,\pm}(c_{j,\pm})\}_{j=1}^n$ and $\{c_{j,0},\dl_{j,0}=\dl_{j,0}(c_{j,0}),\ve_{j,0}=\ve_{j,0}(c_{j,0})\}_{j=1}^n$ such that
\beq
\label{3-70}
\O_{t,+}\subset\cup_{j=1}^n\O_{t,+}^j,\ \O_{t,-}\subset\cup_{j=1}^n\O_{t,-}^j,\ \O_{0}\subset\cup_{j=1}^n\O_{0}^j,
\eeq
where
\ben
\label{3-71} \O_{t,+}^j&=&\{(t,x):\ 0<(t-1)^{\f{1}{2k}}<\ve_{j,+},\ c_{j,+}-\dl_{j,+}<\f{x}{(t-1)^\f{2k+1}{2k}}<c_{j,+}+\dl_{j,+}\},\\
\label{3-72} \O_{t,-}^j&=&\{(t,x):\ 0<(t-1)^{\f{1}{2k}}<\ve_{j,-},\ c_{j,-}-\dl_{j,-}<\f{x}{(t-1)^\f{2k+1}{2k}}<c_{j,-}+\dl_{j,-}\},\\
\label{3-73} \O_{0}^j&=&\{(t,x):\ 0<(1-t)^{\f{1}{2k}}<\ve_{j,0},\ c_{j,+}-\dl_{j,+}<\f{x}{(1-t)^\f{2k+1}{2k}}<c_{j,+}+\dl_{j,+}\},
\een
and these domains satisfy the corresponding properties in Lemma \ref{lem 3-4} and \ref{lem 3-6}.

Set $B=\{(t,x):\ 0<\sqrt{(t-1)^2+x^2}< \rho\}$, and choose $\rho>0$ sufficiently small such that
\beq\label{3-74}
B=\O_{x,+}\cup\O_{x,-}\cup\O_{t,+}\cup\O_{t,-}\cup\O_{0}.
\eeq
We now establish the behaviors of $u$ and its derivatives near $(1,0)$.
It suffices to only consider this in the domains $\O_{x,+}$, $\O_{t,+}^j$ and  $\O_{0}^j$
since the other cases can be treated analogously.

For $(t,x)\in\O_{x,+}$, we have
\beq
\label{3-75}
|u(t,x)-u(1,0)|=|u_0(y_+(t,x))|\lesssim |y_+(t,x)|\lesssim x^\f{1}{2k+1};
\eeq
for $(t,x)\in\O_{t,+}^j$,
\beq
\label{3-76}
|u(t,x)-u(1,0)|=|u_0(y_+(t,x))|\lesssim |y_+(t,x)|\lesssim (t-1)^\f{1}{2k};
\eeq
and for $(t,x)\in\O_{0}^j$,
\beq
\label{3-77}
|u(t,x)-u(1,0)|=|u_0(y(t,x))|\lesssim |y(t,x)|\lesssim (1-t)^\f{1}{2k}.
\eeq
Therefore \eqref{1.1} is obtained.

Let's turn to prove the estimates \eqref{1.2} and \eqref{1.3}.
Note that
\beq\label{3-78}
\left\{
  \begin{array}{ll}
    &\f{\p u}{\p x}=u'_0(y(t,x))\f{\p y}{\p x}(t,x)=\f{u'_0(y(t,x))}{1+tg'(y(t,x))},\\
    &\f{\p u}{\p t}=u'_0(y(t,x))\f{\p y}{\p t}(t,x)=-\f{u'_0(y(t,x))g(y(t,x))}{1+tg'(y(t,x))}.
  \end{array}
\right.
\eeq

For $(t,x)\in\O_{t,+}^j$, by \eqref{3-36} in Lemma \ref{lem 3-4} we have
\ben\label{3-79}
1+tg'(y_+(t,x))&=&-s^{2k}+(2k+1)(1+s^{2k})y_+^{2k}(t,x)+O(y_+^{2k+1}(t,x))\no\\
&=&(-1+(2k+1)\mu_{c_{j,+}}^{2k})s^{2k}+O(s^{2k+1}+s^{2k}|\la-c_{j,+}|)\no\\
&\gtrsim& s^{2k}\no\\
&=&(t-1)\no\\
&\gtrsim&|t-1|+|x|^\f{2k}{2k+1},
\een
where $s=t-1$, and the fact of $-1+(2k+1)\mu_{c_{j,+}}^{2k}>0$ has been used.

For $(t,x)\in\O_{x,+}$, by \eqref{3-45} in Lemma \ref{lem 3-5} we have
\ben\label{3-80}
1+tg'(y(t,x))&=&-\eta\xi^{2k}+(2k+1)(1+\eta\xi^{2k})y^{2k}(t,x)+O(y^{2k+1}(t,x))\no\\
&=&\xi^{2k}+O(\xi^{2k+1}+\eta\xi^{2k})\no\\
&\gtrsim& \xi^{2k}\no\\
&=&x^\f{2k}{2k+1}\no\\
&\gtrsim&|t-1|+|x|^\f{2k}{2k+1},
\een
where $\xi=x^\f{1}{2k+1}$.

For $(t,x)\in\O_{0,+}^j$, by \eqref{3-56} in Lemma \ref{lem 3-6} we arrive at
\ben\label{3-81}
1+tg'(y(t,x))&=&s^{2k}+(2k+1)(1+s^{2k})y^{2k}(t,x)+O(y^{2k+1}(t,x))\no\\
&=&(1+(2k+1)\mu_{c_{j,0}}^{2k})s^{2k}+O(s^{2k+1}+s^{2k}|\la-c_{j,0}|)\no\\
&\gtrsim& s^{2k}\no\\
&=&(1-t)\no\\
&\gtrsim&|t-1|+|x|^\f{2k}{2k+1},
\een
where $s=1-t$, and the fact of $1+(2k+1)\mu_{c_{j,0}}^{2k}>0$ has been used.

Therefore, $1+tg'(y(t,x))\gtrsim|t-1|+|x|^\f{2k}{2k+1}$ holds for $(t,x)\in B$.
In light of \eqref{3-78} and the fact of $g(y(t,x))\sim y(t,x)$ in $B$,
\eqref{1.2}-\eqref{1.3} hold and thus the proof of Theorem \ref{main result1}
is completed.

\rightline{$\square$}

\section{Proof of Theorem \ref{main result2}}

By the characteristics method, we can define $u_\pm(t,x)=u_0(y_\pm(t,x))$. By (\ref{Rankine-Hugoniot Condition}),
the shock curve $x=\vp(t)$ satisfies
\beq\label{4-3}
\bec
&\vp'(t)=\f{f(u_0(y_+(t,\vp(t))))-f(u_0(y_-(t,\vp(t))))}{u_0(y_+(t,\vp(t)))-u_0(y_-(t,\vp(t)))},\\
&\vp(1)=0.
\eec
\eeq

Denote
\beq\label{4-14}
a(x,y)\triangleq \bec
&\ds\f{f(u_0(x))-f(u_0(y))}{u_0(x)-u_0(y)},\text{ if }x\neq y,\\
&g(x),\text{ if }x=y.
\eec
\eeq
By \eqref{0-8}, we have $a(x,y)$ in $C^\i(\bR^2)$ and
\beq\label{4-15}
a(x,y)=-\f{1}{2}(x+y)+b(x,y),
\eeq
where $b(x,y)=b(y,x)$ and $b(x,y)=O(x^2+y^2)\in C^\i$.

We now study the regularity of the shock wave $x=\vp(t)$ as a function of $s=|\ln(t-1)|^{-\f{1}{p}}$.

\begin{lemma}\label{lem 4-3}
Under assumption \eqref{0-8}, for \eqref{4-3} and small $\ve>0$, there exists a solution $x=\vp(t)$ on $[1,1+\ve)$
such that\\
(1) $s\rightarrow\vp(t)$ is of $C^{1}$ on $[0,\ve)$;\\
(2) $x=\vp(t)$ is of $C^1$ on $[1,1+\ve]$ with the behavior $\vp(t)=O(s^2\tau)$.
\end{lemma}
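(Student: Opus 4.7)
The plan is to follow the strategy of Lemma \ref{lem 3-3}, replacing the algebraic scaling by the logarithmic one of \eqref{4-4}. Set $\tau = t-1$ and $s = |\ln\tau|^{-1/p}$, so that $\tau = e^{-1/s^{p}}$ and $\frac{ds}{d\tau} = \frac{s^{p+1}}{p\tau}$, and introduce the normalized unknown
\[
\la(s) \triangleq \frac{\vp(t)}{s\tau}.
\]
First, I would translate the Rankine-Hugoniot equation \eqref{4-3} into an ODE for $\la$. Using the decomposition \eqref{4-14}-\eqref{4-15}, $\vp'(t) = -\tfrac12(y_+ + y_-) + b(y_+,y_-)$, and substituting the Lemma \ref{lem 4-2} expansions \eqref{4-5}-\eqref{4-6} at $\la = \la(s)$ gives $-\tfrac12(y_+ + y_-) = -\tfrac{s^{p+1}\la}{p} + O(s^{\min\{p+2,2p+1\}}+s^{p+1}\la^2)$ and $b(y_+,y_-) = b_0 s^2 + O(s^{\min\{3,p+2\}})$ for a constant $b_0$ determined by $g$. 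Differentiating $\vp(t) = s\tau\la(s)$ with respect to $\tau$ and dividing by $s$ yields the singular Cauchy problem
\beq\label{planeq}
\Big(1+\tfrac{2s^p}{p}\Big)\la(s) + \tfrac{s^{p+1}}{p}\la'(s) = d(s,\la),\qquad \la(0) = 0,
\eeq
with $d(s,\la) = b_0 s + o(s)$ of class $C^1$ in $(s,\la)$.

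Next, I would solve \eqref{planeq} uniquely in $C^1[0,\ve)$. The substitution $\la = s\psi$ removes the singularity:
\[
\Big(1+\tfrac{3s^p}{p}\Big)\psi(s) + \tfrac{s^{p+1}}{p}\psi'(s) = D(s,\psi),\qquad \psi(0) = D(0,0) = b_0,
\]
where $D(s,\psi) = d(s,s\psi)/s$ is continuous and $C^1$ in $(s,\psi)$. Although the coefficient of $\psi'$ vanishes to order $s^{p+1}$, the coefficient of $\psi$ tends to $1$, so the equation behaves algebraically at $s=0$. Changing variable $\o = s^p$ rewrites it as $\o^{2}\tfrac{d\psi}{d\o} + \psi = D - \tfrac{3\o}{p}\psi$; multiplying by the integrating factor $e^{-1/\o}$ and using $\tfrac{d}{d\o}e^{-1/\o} = \tfrac{e^{-1/\o}}{\o^{2}}$ produces the Volterra equation
\[
\psi(\o) = e^{1/\o}\int_{0}^{\o}\frac{e^{-1/\o'}}{\o'^{2}}\Big(D(\o'^{1/p},\psi(\o'))-\tfrac{3\o'}{p}\psi(\o')\Big)\,d\o'.
\]
Using the identity $e^{1/\o}\int_{0}^{\o}\tfrac{e^{-1/\o'}}{\o'^{2}}d\o' = 1$, a Picard contraction on $\{\psi \in C[0,\o_0] : \|\psi-b_0\|_{\infty} \le M\}$ produces a unique continuous $\psi$ with $\psi(0)=b_0$; the integral representation then upgrades it to $C^1$, yielding $\la = s\psi \in C^1[0,\ve)$ with $\la(s) = b_0 s + o(s)$.

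Finally I would translate back. For (2), $\vp(t) = s\tau\la(s) = s^2\tau\psi(s) = O(s^{2}\tau)$; moreover, $y_\pm(t,\vp(t)) \to 0$ as $t\to 1^+$, so $\vp'(t) = a(y_+,y_-) \to a(0,0) = g(0) = 0$, and thus $\vp \in C^1[1,1+\ve]$ with $\vp'(1)=0$. For (1), differentiating $\vp(t) = s^2\tau\psi(s)$ with respect to $s$ via $\frac{d\tau}{ds} = \frac{p\tau}{s^{p+1}}$ produces a finite sum in which each term factors as a power of $s$, times $\tau = e^{-1/s^{p}}$, times $\psi$ or $\psi'$; the $C^\infty$-flatness of $\tau$ at $s=0$ absorbs all negative powers of $s$, so $s\mapsto\vp(t)$ is of class $C^1$ on $[0,\ve)$ (indeed $C^{\i}$-flat at $s=0$). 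The main obstacle is the Volterra fixed point: the integrating factor $e^{-1/\o}$ is infinitely flat at $\o=0$, so closing the contraction requires a weighted norm that tracks the precise cancellation between the exponentially small kernel inside the integral and the exponentially large prefactor $e^{1/\o}$ outside, in order to get uniform bounds up to $\o=0$ and to upgrade the solution to $C^1$ at $s=0$.
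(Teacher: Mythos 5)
Your proposal follows essentially the same route as the paper: the same normalization $\la(s)=\vp(t)/(s\tau)$, the same singular ODE $(1+\tfrac{2s^p}{p})\la+\tfrac{s^{p+1}}{p}\la'=d(s,\la)$, and the same resolution via a Volterra integral equation whose exponentially flat kernel is controlled by the exact identity $e^{1/\o}\int_0^\o \o'^{-2}e^{-1/\o'}\,d\o'=1$ (the paper's version is $\int_0^s e^{s^{-p}}\o^{-1-p}e^{-\o^{-p}}\,d\o=1/p$), your extra substitutions $\la=s\psi$ and $\o=s^p$ being cosmetic. The ``main obstacle'' you flag at the end is in fact already resolved by that identity: the paper closes the $L^\infty$ contraction and then the continuity of $\la'$ directly from it (integrating by parts to cancel the leading singular term), with no weighted norm needed.
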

\begin{proof}
(1) Set $\la(s)=\f{\vp(t)}{s\tau}$. Then
$$
\f{d\vp}{dt}(t)=s\left(\f{s^{1+p}}{p}\f{d\la(s)}{ds}+(\f{s^p}{p}+1)\la(s)\right).
$$
Substituting this into \eqref{4-3} yields
\ben\label{4-16}
&&\f{s^{1+p}}{p}\f{d\la(s)}{ds}+(\f{s^p}{p}+1)\la(s)\no\\
&=&\f{1}{s}a(s\mu_+(s,\la(s)),s\mu_-(s,\la(s)))\no\\
&=&-\f{1}{2}\left(\mu_+(s,\la(s))+\mu_-(s,\la(s))\right)+\f{1}{s}b(s\mu_+(s,\la(s)),s\mu_-(s,\la(s)))\\
&=&-\f{s^p\la}{p}+O\left(s+s^{2p+1}|\la|+s^{p+1}|\la|^2\right).
\een
By \eqref{4-5} and \eqref{4-6}, we have
\beq\label{4-17}
d(s,\la)\triangleq \f{1}{s}a(s\mu_+(s,\la),s\mu_-(s,\la))+\f{s^p}{p}\la
=O\left(s+s^{2p+1}|\la|+s^{p+1}|\la|^2\right).
\eeq
Moreover, $s^ld(s,\la)\in C^{l+p}$ holds for $l=0,1$, which is derived by Lemma \ref{lem 4-2} and
\beq\label{4-18}
\f{d(sd(s,\la(s)))}{ds}=O(1+s^{2p}|\la|+s^{2p+1}|\la'|+s^p|\la|^2+s^{p+2}|\la'|^2).
\eeq
In addition, \eqref{4-3} in $(s,\la)$ can be written as
\beq\label{4-19}
\bec
&\f{s^{1+p}}{p}\f{d\la(s)}{d s}+(\f{2}{p}s^p+1)\la(s)=d(s,\la(s)),\\
&\la(0)=0.
\eec
\eeq
This yields
\beq\label{4-20}
\la(s)=ps^{-2}\int_0^s \o^{1-p}e^{s^{-p}-\o^{-p}}d(\o,\la(\o))d\o.
\eeq
It follows from direct computation that
\ben\label{4-21}
|\la(s)|&\leq&s^{-2}\int_0^ss^2e^{s^{-p}}\o^{-1-p}e^{-\o^{-p}}|d(\o,\la(\o))|d\o)\no\\
&\lesssim& (s+s^{2p+1}\|\la\|_{L^\i[0,s]}+s^{p+1}\|\la\|^2_{L^\i[0,s]})\int_0^se^{s^{-p}}\o^{-1-p}e^{-\o^{-p}}d\o\no\\
&\lesssim& s+s^{2p+1}\|\la\|_{L^\i[0,s]}+s^{p+1}\|\la\|^2_{L^\i[0,s]}.\no
\een
Thus $\|\la\|_{L^\i[0,s]}\leq Cs$ for $s\in(0,\ve]$ and small $\ve>0$. By the analogous
computation, we can apply the contraction mapping theorem to prove that there exists a continuous solution
$\la$ to the integral equation \eqref{4-20}. From \eqref{4-20}, we have
\ben\label{4-22}
\la'(s)&=&ps^{-1-p}d(s,\la(s))-2ps^{-3}\int_0^s\o^{1-p}e^{s^{-p}-\o^{-p}}d(\o,\la(\o))d\o
-ps^{-p-3}e^{s^{-p}}\int_0^s\o^2d(\o,\la(\o))de^{-\o^{-p}}\no\\
&=&-2ps^{-3}\int_0^s\o^{1-p}e^{s^{-p}-\o^{-p}}d(\o,\la(\o))d\o+ps^{-p-3}e^{s^{-p}}\int_0^s
e^{-\o^{-p}}\big(\o d(\o,\la(\o))+\o\f{d(\o d(\o,\la(\o)))}{d\o}\big)d\o\no.
\een
This derives
\ben\label{4-23}
|\la'(s)|&\lesssim&s^{-3}\int_0^ss^2\o^{-1-p}e^{s^{-p}-\o^{-p}}|d(\o,\la(\o))|d\o+s^{-p-3}\int_0^ss^{2+p}\o^{-1-p}e^{s^{-p}-\o^{-p}}|d(\o,\la(\o))|d\o\no\\
&&+s^{-p-3}e^{s^{-p}}\int_0^ss^{2+p}\o^{-1-p}e^{-\o^{-p}}|\f{d(\o d(\o,\la(\o)))}{d\o}|d\o\no\\
&\lesssim&1+\f{\|\la\|_{L^\i[0,s]}}{s}+s^{-1}e^{s^{-p}}\int_0^s\o^{-1-p}e^{-\o^{-p}}(s+s^{2}\|\la'\|_{L^\i[0,s]}+\|\la\|^2_{L^\i[0,s]})d\o\no\\
&\lesssim&1+\f{\|\la\|_{L^\i[0,s]}}{s}+s\|\la'\|_{L^\i[0,s]},\no
\een
and then $\la'(s)\in C[0,\ve]$ can be shown. In addition, by $\vp(t)=s\tau\la(s)$ and $s=|\ln\tau|^{-\f{1}{p}}$,
then $\vp(t)=O(s^2\tau)$ holds.

\end{proof}

{\bf Remark 4.1.} {\it The regularity of $\vp(t)$ in Lemma \ref{lem 4-3} is optimal. Indeed, we consider Burgers' equation
$$
\left\{
  \begin{array}{ll}
    &\ds\f{\p u}{\p t}+\f{\p}{\p x}(\f{1}{2}u^2)=0, \\
    &\ds u(0,x)=-x+\f{1}{p}e^{-|x|^{-p}}\left(x+x^2\right),\ p>0.
  \end{array}
\right.
$$
In this case, $g(x)=u_0(x)=-x+\f{1}{p}e^{-|x|^{-p}}\left(x+x^2\right)$. On the other hand, \eqref{4-7} can be written as
$$
F(s,\la,\zeta)\triangleq G(s,\la,\mu)=\mu(-1+e^\zeta)+\mu e^{\zeta-s^{-p}}+s\mu^{2}e^{\zeta}(1+e^{-s^{-p}})-\la,
$$
which derives $\f{\p F}{\p s}|_{s=\la=\zeta=0}=1$. So we have that
\ben
\label{4-24-1}y_+(t,x)&=&|\ln(t-1)|^{-\f{1}{p}}(1+\f{\ln p}{p}|\ln(t-1)|^{-1})+\f{\vp(t)}{p(t-1)|\ln(t-1)|}+\f{1}{p}|\ln(t-1)|^{-1-\f{2}{p}}+o(|\ln(t-1)|^{-1-\f{2}{p}}),\no\\
\label{4-24-2}y_-(t,x)&=&-|\ln(t-1)|^{-\f{1}{p}}(1+\f{\ln p}{p}|\ln(t-1)|^{-1})+\f{\vp(t)}{p(t-1)|\ln(t-1)|}+\f{1}{p}|\ln(t-1)|^{-1-\f{2}{p}}+o(|\ln(t-1)|^{-1-\f{2}{p}}).\no
\een
It follows from  Rankine-Hugoniot condition that
\ben
\vp'(t)&=&-\f{y_-(t,\vp(t))+y_+(t,\vp(t))}{2}\no\\
&=&-\f{\vp(t)}{p(t-1)|\ln(t-1)|}-\f{1}{p}|\ln(t-1)|^{-1-\f{2}{p}}+o(|\ln(t-1)|^{-1-\f{2}{p}}).\no
\een
This means that $\vp(t)=O((t-1)|\ln(t-1)|^{-\f{2}{p}})$ is optimal.}

\vspace{0.3cm}

\begin{lemma}\label{lem 4-4}
Under assumption \eqref{0-7}, for any $c\in(-1,+\i)$, there exist $\ve=\ve(c),\dl=\dl(c)>0$
such that for $(s,\la)\in\{0<s<\ve,c-\dl<\la<c+\dl\}$,
$(s,\la)\rightarrow y_+(t,x)$ has the expansion
\beq
\label{4-25}y_+(t,x)=s\left(1+\f{\ln(c+1)+\ln p}{p}s^p+\f{s^p(\la-c)}{p(c+1)}\right)+O_c(s^{\min\{p+2,2p+1\}}+s^{p+1}|\la-c|^2),
\eeq
and for $(s,\la)\in\{0<s<\ve,-c-\dl<\la<-c+\dl\}$,
$(s,\la)\rightarrow y_-(t,x)$ has the expansion
\beq
\label{4-26}y_-(t,x)=s\left(-1-\f{\ln(c+1)+\ln p}{p}s^p+\f{s^p(\la+c)}{p(c+1)}\right)+O_c(s^{\min\{p+2,2p+1\}}+s^{p+1}|\la+c|^2).
\eeq
\end{lemma}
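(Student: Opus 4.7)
The plan is to mirror the proof of Lemma \ref{lem 4-2}, but anchor the implicit function theorem at a shifted base point corresponding to $\la = c$ (for $y_+$) or $\la = -c$ (for $y_-$). Starting from the equation $G(s,\la,\mu) = 0$ in \eqref{4-7} and noting from \eqref{4-25} that $\mu_+ = y_+/s \to 1$ as $(s,\la) \to (0,c)$, I apply the same substitution $\zeta = s^{-p}(1 - \mu^{-p}) - \ln p$, $\mu = (1 - s^p(\zeta + \ln p))^{-1/p}$ from Case 1 of Lemma \ref{lem 4-2}, obtaining the function $F_1(s,\la,\zeta)$ of \eqref{4-8-1}. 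At $s = 0$ one has $\mu \equiv 1$ and $e^{\zeta - s^{-p}}, r_0(s\mu)/s$ both vanish, so
\[F_1(0,\la,\zeta) = e^\zeta - 1 - \la,\]
identifying the natural anchor $(s,\la,\zeta) = (0, c, \ln(c+1))$.

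At this anchor $\p_\zeta F_1 = \mu e^\zeta = c + 1$, which is nonzero precisely because $c > -1$; this is exactly where the hypothesis on $c$ enters. The implicit function theorem then produces a $C^1$ function $\zeta(s,\la)$ near $(0,c)$ with $\zeta(0,c) = \ln(c+1)$ and
\[\zeta(s,\la) = \ln(c+1) + \f{\la - c}{c+1} + O\bigl(s + (\la - c)^2\bigr),\]
the $\p_s\zeta(0,c)$ coefficient being a finite quantity involving $r_0''(0), c, p$. Writing $\zeta + \ln p = \ln(p(c+1)) + (\zeta - \ln(c+1))$ and Taylor-expanding $\mu = (1 - s^p(\zeta + \ln p))^{-1/p}$ in $s^p$ gives
\[\mu_+ = 1 + \f{s^p\ln(p(c+1))}{p} + \f{s^p(\la - c)}{p(c+1)} + O\bigl(s^{p+1} + s^{2p} + s^p(\la - c)^2\bigr);\]
multiplying by $s$ consolidates the remainder as $O(s^{\min\{p+2,\,2p+1\}} + s^{p+1}(\la - c)^2)$, which is exactly \eqref{4-25}.

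The expansion \eqref{4-26} follows symmetrically: working in the region $\mu < 0$ with $\mu = -|\mu|$ and the substitution $\zeta = s^{-p}(1 - |\mu|^{-p}) - \ln p$, the anchor becomes $(0, -c, \ln(c+1))$ with $|\mu| = 1$, and the analogous derivative is $-(c+1) \neq 0$; all subsequent computations transfer with appropriate sign flips (in particular $\p_\la\zeta = -1/(c+1)$, producing the sign pattern of \eqref{4-26}). For the subcase $p \in (0,1)$, I would additionally reparametrize via $\o = s^p$ as in Case 2 of Lemma \ref{lem 4-2} to retain the smoothness required by IFT; the leading expansion is unaffected. The main obstacle is the careful remainder bookkeeping: tracking how $s$ and $\la - c$ enter the composition $\mu = (1 - s^p(\zeta + \ln p))^{-1/p}$ so that the $O(s^{p+1})$-from-$\p_s\zeta$ contribution becomes $O(s^{p+2})$ in $y_\pm = s\mu$ and the quadratic Taylor term produces the $O(s^{2p+1})$ piece, matching $O(s^{\min\{p+2,\,2p+1\}})$ in both ranges of $p$. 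Finally, the factor $1/(c+1)$ in the linear $(\la \mp c)$ coefficient blows up as $c \to -1^+$, consistent with the excluded endpoint in the hypothesis.
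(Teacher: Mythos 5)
Your proposal is correct and follows essentially the same route as the paper: the same substitution $\zeta=s^{-p}(1-|\mu|^{-p})-\ln p$ from Lemma \ref{lem 4-2}, the implicit function theorem anchored at $(s,\la,\zeta)=(0,\pm c,\ln(c+1))$ where $\p_\zeta F_1=\pm(c+1)\neq 0$, the $\o=s^p$ reparametrization for $p<1$, and the same remainder bookkeeping yielding $O(s^{\min\{p+2,2p+1\}}+s^{p+1}|\la\mp c|^2)$. The only difference is cosmetic: the paper records the explicit coefficient of the linear $s$ (resp.\ $\o$) term in $\zeta$, which you leave as an unspecified finite constant, and this does not affect the stated expansions.
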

\begin{proof}
Similarly to Lemma \ref{lem 4-2}, we first consider the case of $p\geq1$. By taking $\la=c>-1$ and $s=0$ in \eqref{4-8-1},
we have the solution $\zeta_{c}=\ln(c+1)$ and $\mu_{c}=1$. Furthermore, direct computation yields
\ben
\label{4-27-0-1}\p_s F_1(0,\zeta_c,c)&=&c\left(\ln(c+1)+\ln p\right)\dl_p^1+\f{p(c+1)}{2}r''_0(0),\\
\label{4-27-1}\p_\zeta F_1(0,\zeta_c,c)&=&c+1,\\
\label{4-28-1}\p_\la F_1(0,\zeta_c,c)&=&-1,
\een
where $\dl_p^1=\left\{\begin{array}{cc}1,&\ p=1,\\0,&\ p>1.\end{array}\right.$ By the implicit function theorem, we have
\beq
\label{4-29-1}\zeta(s,\la)=\ln(c+1)-\left(c\left(\ln(c+1)+\ln p\right)\dl_p^1+\f{p(c+1)}{2}r''_0(0)\right)s+\f{\la-c}{c+1}+O_c(s^{2}+|\la-c|^2),
\eeq
and then
\beq
\label{4-30-1}
\mu_+(s,\la)=(1-s^p(\zeta+\ln p))^{-\f{1}{p}}=1+\f{\ln(c+1)+\ln p}{p}s^p+\f{s^p(\la-c)}{p(c+1)}+O_c(s^{p+1}+s^p|\la-c|^2).
\eeq
from which \eqref{4-25} follows.

For $p\in(0,1]$, recalling $\o=s^p$ and then taking $\o=0$, $\la=c$ and $\zeta=\zeta_c=\ln(c+1)$ in \eqref{4-8-2},
one can arrive at
\ben
\label{4-27-0-2}\p_\o F_2(0,\zeta_c,c)&=&\f{c\left(\ln(c+1)+\ln p\right)}{p},\\
\label{4-27-2}\p_\zeta F_2(0,\zeta_c,c)&=&c+1,\\
\label{4-28-2}\p_\la F_2(0,\zeta_c,c)&=&-1,
\een
and then by the implicit function theorem we have
\beq
\label{4-29-2}\zeta(s,\la)=\tilde\zeta(\o,\la)=\ln(c+1)+\f{c\left(\ln(c+1)+\ln p\right)}{p}\o+\f{\la-c}{c+1}+O_c(s^{2}+|\la-c|^2).
\eeq
Thus
\beq\label{4-30-2}
\mu_+(s,\la)=\tilde\mu_+(\o,\la)=(1-\o(\zeta+\ln p))^{-\f{1}{p}}=1+\f{\ln(c+1)+\ln p}{p}\o+\f{\o(\la-c)}{p(c+1)}+O_c(\o^{2}+\o|\la-c|^2)
\eeq
and \eqref{4-25} can be obtained.

On the other hand for $\la=-c$ and $\mu_{-c}=-1$, \eqref{4-26} can be proved by the same method.
\end{proof}

\bigskip

Next we consider the behavior of $y(t,x)$ near $x-$axis. Note that for $y>0$,
\beq\label{4-31}
x=\xi e^{-\xi^{-p}}
\eeq
is a monotonically increasing function of $\xi$ from $[0,+\i)$ to $[0,+\i)$. Then there exists a unique inverse function $h(x)$ of \eqref{4-31} satisfying that for $x>0$ sufficiently small,
\beq\label{4-32}
h(x)=|\ln x|^{-\f{1}{p}}+O(|\ln x|^{-1-\f{1}{p}}\ln|\ln x|).
\eeq
Define
\beq\label{4-33}
\xi=\left\{
      \begin{array}{ll}
        h(|x|), & x>0, \\
        -h(|x|), & x<0,
      \end{array}
    \right.
\nu=\f{y}{\xi},\ \eta=\f{(t-1)h(|x|)}{|x|}.
\eeq

\begin{lemma}\label{lem 4-5}
Under assumption \eqref{0-7}, there exist some constants $\ve$, $\dl>0$ small enough such that for $(\eta,\xi)\in\{0<\xi<\dl,-\ve<\eta<\ve\}$,
$(\eta,\xi)\rightarrow y_+(t,x)$ has the expansion
\beq
\label{4-34-1}y_+(t,x)=\xi\left(1+\f{\ln p}{p}\xi^p+\f{1}{p}\xi^p\eta\right)+O(\xi^{\min\{p+2,2p+1\}}+\xi^{p+1}\eta^2),
\eeq
and for $(\eta,\xi)\in\{-\dl<\xi<0,-\ve<\eta<\ve\}$,
$(\eta,\xi)\rightarrow y_+(t,x)$ has the expansion
\beq
\label{4-34-2}y_-(t,x)=\xi\left(1+\f{\ln p}{p}(-\xi)^p+\f{1}{p}(-\xi)^p\eta\right)+O((-\xi)^{\min\{p+2,2p+1\}}+(-\xi)^{p+1}\eta^2).
\eeq
\end{lemma}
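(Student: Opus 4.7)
The plan is to mimic the implicit--function--theorem argument of Lemma \ref{lem 4-2}, simply swapping the time--scale parametrization \eqref{4-4} for the space--scale parametrization \eqref{4-33}. The observation that drives the proof is that, because $|x|=|\xi|e^{-|\xi|^{-p}}$ and $h(|x|)=|\xi|$, the variables are related by $t-1=\eta e^{-|\xi|^{-p}}$. Substituting $y=\xi\nu$ into the characteristic relation \eqref{2-3} with $g$ given by \eqref{0-8} and then dividing by $\xi e^{-|\xi|^{-p}}$, one recasts \eqref{2-3} as an equation of the form
\[
H(\xi,\eta,\nu)\triangleq -\eta\nu+e^{|\xi|^{-p}(1-|\nu|^{-p})}\Bigl(\nu+\frac{p\,r_0(\xi\nu)}{\xi}\Bigr)\bigl(1+\eta e^{-|\xi|^{-p}}\bigr)-1=0,
\]
in which $(\xi,\eta,\nu)=(0,0,1)$ is the base point corresponding to the blowup. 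This is the direct analogue of \eqref{4-7}, with $\xi$ now playing the role of $s$ and $\eta$ the role of $\la$.

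The exponent $|\xi|^{-p}(1-|\nu|^{-p})$ is singular, so exactly as in Lemma \ref{lem 4-2} we introduce $\zeta=|\xi|^{-p}(1-|\nu|^{-p})-\ln p$, equivalently $\nu=\bigl(1-|\xi|^p(\zeta+\ln p)\bigr)^{-1/p}$. After substitution, $H$ becomes a function $F(\xi,\eta,\zeta)$ that, for $p\ge 1$, is $C^1$ in a neighborhood of the origin and satisfies $F(0,0,0)=0$. A direct computation paralleling \eqref{4-27-1}--\eqref{4-28-1} yields $\partial_\zeta F(0,0,0)=1\neq 0$, together with the values of $\partial_\xi F(0,0,0)$ and $\partial_\eta F(0,0,0)$ needed to extract the Taylor coefficients; the implicit function theorem then produces $\zeta(\xi,\eta)\in C^1$ with an expansion analogous to \eqref{4-9-1}. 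Plugging this into $\nu=(1-|\xi|^p(\zeta+\ln p))^{-1/p}$ and expanding $(1-w)^{-1/p}=1+w/p+O(w^2)$ yields $\nu(\xi,\eta)=1+\tfrac{\ln p}{p}\xi^p+\tfrac{1}{p}\xi^p\eta+O\bigl(\xi^{\min\{2p,p+1\}}+\xi^p\eta^2\bigr)$, and multiplying by $\xi$ gives \eqref{4-34-1}.

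For $0<p<1$, the power $|\xi|^p$ is only H\"older in $\xi$, so, as in Lemma \ref{lem 4-2}, we work with $\omega=|\xi|^p$ and recover the same implicit--function conclusion for a function $\tilde F(\omega,\eta,\zeta)$ that is $C^1$ in $(\omega,\eta)$; reverting to $\xi$ turns the $\omega$--expansion into an expansion in $\xi^p$, which accounts for the power $\xi^{\min\{p+2,2p+1\}}$ in \eqref{4-34-1}. Formula \eqref{4-34-2} is obtained in the same way by taking the root $\nu\to -1$ instead and using $|\nu|$ rather than $\nu$ in the definition of $\zeta$; the symmetry of the construction in the sign of $\xi$ then produces the stated expansion for $y_-(t,x)$.

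The one delicate point, and the main obstacle, is checking the regularity of $F$ (respectively $\tilde F$) at the origin: the factor $e^{-|\xi|^{-p}}$ vanishes to infinite order in $\xi$ but is multiplied by $\eta$ and by $h'$--type terms, so one must verify carefully that each of $\partial_\xi F$, $\partial_\eta F$ and $\partial_\zeta F$ extends continuously to the origin and that no hidden singularity appears when expressing $r_0(\xi\nu)/\xi$ and its derivatives in the new unknowns. This is exactly the bookkeeping issue already handled by the two--case split in Lemma \ref{lem 4-2}, and I would reuse the same estimates—in particular the bounds $r_0^{(j)}(y)=O(y^{2-j})$ for $j\le 2$ from \eqref{0-9}—to push the argument through.
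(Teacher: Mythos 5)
Your proposal follows essentially the same route as the paper: the same change of variables $(\xi,\eta,\nu)$ from \eqref{4-33}, the same reduction of \eqref{2-3} to an implicit equation, the same substitution $\zeta=|\xi|^{-p}(1-|\nu|^{-p})-\ln p$ with the $p\ge1$ versus $0<p<1$ split, and the same implicit-function-theorem computation, so the approach is correct. One small algebra slip to fix: after dividing \eqref{2-3} by $\xi e^{-|\xi|^{-p}}$ the middle factor should read $\frac{\nu}{p}+\frac{r_0(\xi\nu)}{\xi}$ rather than $\nu+\frac{p\,r_0(\xi\nu)}{\xi}$ --- that $1/p$ is precisely what forces the $-\ln p$ shift in $\zeta$ (so that $F(0,0,0)=0$) and what produces the $\frac{\ln p}{p}\xi^{p}$ term in \eqref{4-34-1}.
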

\begin{proof}
We only consider the case of $x>0$ and then $y=y_+(t,x)>0$ since the other case can be treated analogously.
By $x=\xi e^{-\xi^{-p}}$, $y=\xi\nu$, $t-1=\eta e^{-\xi^{-p}}$ and \eqref{0-8}, \eqref{2-3} becomes
\beq\label{4-35}
H(\eta,\xi,\nu)\triangleq-\eta\nu+\f{\nu}{p}e^{-\xi^{-p}\nu^{-p}}\left(\eta+e^{\xi^{-p}}\right)
+\f{e^{-\xi^{-p}\nu^{-p}}\left(\eta+e^{\xi^{-p}}\right)}{\xi}r_0(\xi\nu)-1=0.
\eeq
Similarly to Lemma \ref{lem 4-2}, we divide the proof procedure into two cases of $p\ge1$ and $0<p<1$.
Firstly we consider $p\ge1$. Set $\th=\xi^{-p}(1-\nu^{-p})-\ln p$ and $\nu=\left(1-\xi^{p}(\th+\ln p)\right)^{-\f{1}{p}}$. Then \eqref{4-35} becomes
\beq\label{4-36-1}
J_1(\eta,\xi,\th)\triangleq H(\eta,\xi,\nu)=-\eta\nu+\left(\nu e^\th-1\right)
+\eta\nu e^{\th-\xi^{-p}}+\f{p e^\th\left(\eta e^{-\xi^{-p}}+1\right)}{\xi}r_0(\xi\nu).
\eeq
Note $J_1(0,0,0)=0$, and
\beq\label{4-37-1}
\p_\xi\nu=\xi^{p-1}(\th+\ln p)(1-\xi^p(\th+\ln p))^{-\f{1}{p}-1},\ \p_\th\nu=\f{1}{p}\xi^p(1-\xi^p(\th+\ln p))^{-\f{1}{p}-1}
\eeq
are bounded near $\xi=0$ and $\th=0$ by $p\ge1$. Due to
\ben\label{4-38-1}
\p_\xi J_1&=&(e^\th-\eta)\p_\xi\nu+\eta e^{\th-\xi^{-p}}(\p_\xi\nu+p\nu\xi^{-p-1})\\
&&-\f{pe^\th}{\xi^2}(\eta e^{-\xi^{-p}}(1-p\xi^{-p})+1)r_0(\xi\nu)+\f{p e^\th\left(\eta e^{-\xi^{-p}}+1\right)}{\xi}r'_0(\xi\nu)(\nu+\xi\p_\xi\nu),\\
\p_\th J_1&=&e^\th\nu+(e^\th-\eta)\p_\th\nu+\eta e^{\th-\xi^{-p}}(\nu+\p_\th\nu)+\f{p e^\th\left(\eta e^{-\xi^{-p}}+1\right)}{\xi}(r_0(\xi\nu)+\xi r'_0(\xi\nu)\p_\th\nu),\\
\p_\eta J_1&=&-\nu+e^{\th-\xi^{-p}}(\nu+\f{p}{\xi}r_0(\xi\nu)),
\een
we then obtain
\beq\label{4-39-1}
\p_\xi J_1(0,0,0)=\dl_1^p\ln p+\f{p}{2}r''_0(0),\ \p_\th J_1(0,0,0)=1,\ \p_\eta J_1(0,0,0)=-1.
\eeq
Thus by the implicit function theorem, one can deduce that there exists a unique function $\th=\th(\eta,\xi)$ near $(\eta,\xi)=(0,0)$ satisfying
\beq\label{4-40-1}
\th(\eta,\xi)=-\left(\dl_1^p\ln p+\f{p}{2}r''_0(0)\right)\xi+\eta+O(\xi^2+\eta^2).
\eeq
Recalling $\nu=\left(1-\xi^p(\th+\ln p)\right)^{-\f{1}{p}}$, we then have
\beq\label{4-41-1}
\nu(\eta,\xi)=1+\f{\ln p}{p}\xi^p+\f{1}{p}\xi^p\eta+O\left(\xi^{p+1}+\xi^p\eta^2\right).
\eeq

\bigskip

For $p\in(0,1)$, set $\vs=\xi^p$ and then
$\nu=(1-\vs(\th+\ln p))^{-\f{1}{p}}$.
In this case, \eqref{4-35} becomes
\beq\label{4-36-2}
J_2(\eta,\vs,\th)\triangleq H(\eta,\xi,\nu)=-\eta\nu+\left(\nu e^\th-1\right)
+\eta\nu e^{\th-\vs^{-1}}+\f{p e^\th\left(\eta e^{-\vs^{-1}}+1\right)}{\vs^\f{1}{p}}r_0(\vs^{\f{1}{p}}\nu).
\eeq

By $J_2(0,0,0)=0$ and
\beq\label{4-37-1}
\p_\vs\nu=\f{1}{p}(\th+\ln p)(1-\vs(\th+\ln p))^{-\f{1}{p}-1},\ \p_\th\nu=\f{\vs}{p}(1-\vs(\th+\ln p))^{-\f{1}{p}-1},
\eeq
under  assumption \eqref{0-8}, we have that $J_2(\eta,\xi,\th)\in C^\f{1}{p}$ and
\ben\label{4-38-1}
\p_\xi J_2&=&(e^\th-\eta)\p_\vs\nu+\eta e^{\th-\vs^{-1}}(\p_\vs\nu+\nu\vs^{-2})\\
&&-\f{e^\th}{\vs^{\f{1}{p}+1}}(\eta e^{-\vs^{-1}}(1-p\vs^{-1})+1)r_0(\vs^\f{1}{p}\nu)+\f{p e^\th\left(\eta e^{-\vs^{-1}}+1\right)}{\vs^\f{1}{p}}r'_0(\vs^\f{1}{p}\nu)(\f{1}{p}\vs^{\f{1}{p}-1}\nu+\vs^\f{1}{p}\p_\vs\nu),\\
\p_\th J_2&=&e^\th\nu+(e^\th-\eta)\p_\th\nu+\eta e^{\th-\vs^{-1}}(\nu+\p_\th\nu)+\f{p e^\th\left(\eta e^{-\vs^{-1}}+1\right)}{\vs^\f{1}{p}}(r_0(\vs^\f{1}{p}\nu)+\vs^\f{1}{p} r'_0(\vs^\f{1}{p}\nu)\p_\th\nu),\\
\p_\eta J_2&=&-\nu+e^{\th-\vs^{-1}}(\nu+\f{p}{\vs^\f{1}{p}}r_0(\vs^\f{1}{p}\nu)).
\een
This yields
\beq\label{4-39-2}
\p_\vs  J_2(0,0,0)=\f{\ln p}{p},\ \p_\th J_2(0,0,0)=1,\ \p_\eta J_2(0,0,0)=-1.
\eeq
By the implicit function theorem, we know that there exists a unique solution $\th=\th(\eta,\vs)$
for $(\eta,\vs)$ near $(0,0)$, which satisfies $\th(0,0)=0$ and
\beq\label{4-40-2}
\th(\eta,\vs)=-\f{\ln p}{p}\vs+\eta+O\left(\vs^2+\eta^2\right).
\eeq
By $\xi=\vs^\f{1}{p}$, then
\ben\label{4-41-2}
\nu(\eta,\xi)&=&(1-\vs(\th+\ln p))^{-\f{1}{p}}\no\\
&=&\left(1-\vs\ln p-\vs\eta+O\left(\vs^2+\eta^2\right)\right)^{-\f{1}{p}}\no\\
&=&1+\f{\ln p}{p}\xi^p+\f{1}{p}\xi^p\eta+O(\xi^{2p}+\xi^p\eta^2).
\een
Therefore we finish the proof of \eqref{4-34-1}.

For $x<0$, we can transform \eqref{2-3} to $H(\eta,-\xi,-\nu)=0$. Analogously, we can obtain \eqref{4-34-2} about $y_-(t,x)$
and $x<0$.
\end{proof}

\bigskip

Next we consider the behavior of $y(t,x)$ for $t<1$ near $(1,0)$. Without of confusion, we still denote
\beq
\label{4-41}
\tau=1-t,\ s=|\ln \tau|^{-\f{1}{p}},\ \la=\f{x}{s\tau},\ \mu=\f{y}{s},
\eeq
as in the case of $t>1$. By divided $s\tau$, \eqref{2-3} then becomes
\beq\label{4-42}
G(s,\la,\mu)\triangleq\mu(1+\f{1}{p}e^{s^{-p}(1-|\mu|^{-p})})-\f{\mu}{p}e^{-s^{-p}|\mu|^{-p}}
+\f{e^{-|\mu|^{-p}s^{-p}}(e^{s^{-p}}-1)}{s}r_0(s\mu)-\la=0.
\eeq

\begin{lemma}\label{lem 4-6}
Under assumption \eqref{0-7}, we have that

\noindent (1) for any $c>1$, there exist $\ve=\ve(c),\dl=\dl(c)>0$
such that for $(s,\la)\in\{0<s<\ve,1<c-\dl<\la<c+\dl\}$,
$(s,\la)\rightarrow y(t,x)$ has the expansion
\beq
\label{4-43-1}y(t,x)=s\left(1+\f{\ln(c-1)+\ln p}{p}s^p+\f{s^p(\la-c)}{p(c-1)}\right)+O(s^{\min\{p+2,2p+1\}}+s^{p+1}|\la-c|^2).
\eeq\\
(2) for any $0\le c<1$, there exist $\ve=\ve(c),\dl=\dl(c)>0$
such that for $(s,\la)\in\{0<s<\ve,c-\dl<\la<c+\dl<1\}$,
$(s,\la)\rightarrow y(t,x)$ has the expansion
\beq
\label{4-43-2}y(t,x)=s\left(c+(\la-c)\right)+O(s^3+s|\la-c|^2).
\eeq\\
(3) for any $c<-1$, there exist $\ve=\ve(c),\dl=\dl(c)>0$
such that for $(s,\la)\in\{0<s<\ve,c-\dl<\la<c+\dl<-1\}$,
$(s,\la)\rightarrow y(t,x)$ has the expansion
\beq
\label{4-43-4}y(t,x)=-s\left(1+\f{\ln(-c+1)+\ln p}{p}s^p-\f{s^p(\la-c)}{p(c+1)}\right)+O(s^{\min\{p+2,2p+1\}}+s^{p+1}|\la-c|^2).
\eeq\\
(4) for any $-1<c<0$, there exist $\ve=\ve(c),\dl=\dl(c)>0$
such that for $(s,\la)\in\{0<s<\ve,-1<c-\dl<\la<c+\dl<1\}$,
$(s,\la)\rightarrow y(t,x)$ has the expansion
\beq
\label{4-43-5}y(t,x)=s\left(c+(\la-c)\right)+O(s^3+s|\la-c|^2).
\eeq
\end{lemma}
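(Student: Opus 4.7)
The plan is to apply the implicit function theorem to the equation $G(s,\lambda,\mu)=0$ in \eqref{4-42} for each of the four cases, mimicking the structure of Lemma \ref{lem 4-4} in the $t>1$ setting. The split into cases (1)--(4) reflects the behavior of the exponential factor $e^{s^{-p}(1-|\mu|^{-p})}$ as $s\to 0^+$: when $|c|>1$ this factor is of indeterminate form near the expected root and requires a substitution to regularize, whereas for $|c|<1$ all the exponentials in $G$ decay to $0$ faster than any polynomial in $s$ and IFT can be applied to $G$ directly.

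For case (1) with $c>1$, I would introduce $\zeta=s^{-p}(1-\mu^{-p})-\ln(p(c-1))$, so that $\mu=(1-s^p(\zeta+\ln(p(c-1))))^{-1/p}$ and $\tfrac{1}{p}e^{s^{-p}(1-\mu^{-p})}=(c-1)e^{\zeta}$. The transformed equation
$$\tilde G(s,\lambda,\zeta)\triangleq\mu\bigl(1+(c-1)e^{\zeta}\bigr)-\tfrac{\mu}{p}e^{-s^{-p}\mu^{-p}}+(\mbox{super-polynomially small})-\lambda=0$$
then satisfies $\tilde G(0,c,0)=0$, $\partial_{\zeta}\tilde G(0,c,0)=c-1\ne 0$, and $\partial_{\lambda}\tilde G(0,c,0)=-1$, so IFT supplies a unique $C^1$ function $\zeta(s,\lambda)=\tfrac{\lambda-c}{c-1}+O(s+(\lambda-c)^2)$, and substituting back gives \eqref{4-43-1}. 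Case (3) with $c<-1$ is mirror-symmetric: writing $\mu=-|\mu|$ and $\zeta=s^{-p}(1-|\mu|^{-p})-\ln(p(|c|-1))$ reduces the problem to an IFT argument of the same shape and produces \eqref{4-43-4}.

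Cases (2) and (4), where $|c|<1$, are essentially trivial in comparison. Since $1-|c|^{-p}<0$, both $e^{s^{-p}(1-|c|^{-p})}$ and $e^{-s^{-p}|c|^{-p}}$ vanish faster than any polynomial in $s$; combined with $r_0(s\mu)=O(s^2)$ from \eqref{0-9}, the entire third term of $G$ is super-polynomially small. Hence $G$ extends smoothly to $s=0$ with $G(0,\lambda,\mu)=\mu-\lambda$, the unique root is $\mu_c=c$, and $\partial_\mu G(0,c,c)=1$, so IFT yields $\mu(s,\lambda)=\lambda+O(s^N)$ for any $N$, which is comfortably contained in the $O(s^3+s(\lambda-c)^2)$ remainder of \eqref{4-43-2} and \eqref{4-43-5}.

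The main obstacle is the technical bookkeeping in the $0<p<1$ regime of cases (1) and (3), where $\partial_s\mu$ is not bounded in the naive parametrization: as in Lemma \ref{lem 4-2}, I would switch to the auxiliary variable $\omega=s^p$ and recast the IFT verification in $(\omega,\lambda,\zeta)$ to restore boundedness of all the relevant partial derivatives. Tracking the error through the two sub-regimes $p\ge 1$ and $0<p<1$ separately then yields the unified remainder $O(s^{\min\{p+2,2p+1\}}+s^{p+1}|\lambda-c|^2)$ claimed in (1) and (3), and the proof is completed by substituting $y=s\mu$.
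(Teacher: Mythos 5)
Your proposal is correct and follows essentially the same route as the paper: the substitution $\zeta=s^{-p}(1-|\mu|^{-p})-\ln p$ (yours differs only by the harmless additive constant $\ln(c-1)$) to regularize the indeterminate exponential when $|c|>1$, the switch to $\o=s^p$ in the regime $0<p<1$, and a direct application of the implicit function theorem to $G$ itself when $|c|<1$, where the paper likewise verifies $\p_sG(0,c,c)=0$, $\p_\mu G(0,c,c)=1$ because all exponential terms are super-polynomially small. The computations of the relevant partial derivatives ($\p_\zeta=c-1$, $\p_\la=-1$) and the resulting remainder $O(s^{\min\{p+2,2p+1\}}+s^{p+1}|\la-c|^2)$ match the paper's.
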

\begin{proof}
We only prove the cases (1), (2) for $c\ge0$ since  the cases (3), (4) can be obtained by the same way.

\bigskip

\noindent(1) If $c>1$, it is similar to the proof of Lemma \ref{lem 4-2} that we adopt the variable transformation
\beq\label{4-44}
\zeta=s^{-p}(1-\mu^{-p})-\ln p,
\eeq
and then $\mu=(1-s^p(\zeta+\ln p))^{-\f{1}{p}}$.

At first, we assume $p\ge 1$. Then \eqref{4-42} becomes
\beq\label{4-45-1}
F_1(s,\la,\zeta)\triangleq \mu(1+e^\zeta)-\mu
e^{\zeta-s^{-p}}+\f{pe^{\zeta}(1-e^{-s^{-p}})}{s}r_0(s\mu)-\la.
\eeq
It is easy to see that for $c> 1$, $F_1(0,c,\ln(c-1))=0$ and
\beq
\p_s\mu=s^{p-1}(\zeta+\ln p)(1-s^p(\zeta+\ln p))^{-\f{1}{p}-1},\
\p_\zeta\mu=\f{s^p}{p}(1-s^p(\zeta+\ln p))^{-\f{1}{p}-1}.\no
\eeq
Then
\ben\label{4-46-1}
\p_s F_1(s,\la,\zeta)&=&\left(1+e^\zeta-e^{\zeta-s^{-p}}\right)\p_s\mu
+ps^{-p-1}\mu e^{\zeta-s^{-p}}\no\\
&&+\f{p e^\zeta}{s^2}\left(e^{-s^{-p}}(1-ps^{-p})-1\right)r_0(s\mu)
+\f{pe^{\zeta}(1-e^{-s^{-p}})}{s}r'_0(s\mu)\left(\mu+s\p_s\mu\right),\no\\
\p_\la F_1(s,\la,\zeta)&=&-1,\no\\
\p_\zeta F_1(s,\la,\zeta)&=&\left(1+e^\zeta-e^{\zeta-s^{-p}}\right)\p_\zeta\mu
+\mu e^\zeta(1-e^{-s^{-p}})+\f{pe^{\zeta}(1-e^{-s^{-p}})}{s}(r_0(s\mu)+r'_0(s\mu)\p_\zeta\mu).\no
\een
This yields
\ben
&&\p_s F_1(0,c,\ln(c-1))=c\left(\ln p+\ln(c-1)\right)\dl_1^p+\f{p(c-1)}{2}r''_0(0),\ \p_\la F_1(0,c,\ln(c-1))=-1,\no\\
&&\p_\zeta F_1(0,c,\ln(c-1))=c-1.
\een
By the implicit function theorem, there exists a unique solution $\zeta(s,\la)$ satisfying $\zeta(0,c)=\ln(c-1)$ and $\zeta(s,\la)$ is $C^\i$ near $(0,c)$ with
\beq\label{4-47-1}
\zeta=\ln(c-1)-\f{c\left(\ln p+\ln(c-1)\right)\dl_1^p+\f{p(c-1)}{2}r''_0(0)}{c-1}s+\f{\la-c}{c-1}+O(s+\la^2).
\eeq
Thus
\ben\label{4-48-1}
\mu&=&(1-s^p(\zeta+\ln p))^{-\f{1}{p}}\no\\
&=&1+\f{\ln(c-1)+\ln p}{p}s^p+\f{s^p(\la-c)}{p(c-1)}+O(s^{p+1}+s^{p}(\la-c)^2).
\een

Secondly, we consider the case of $p\in(0,1)$. Let $\o=s^{p}$
and then $\mu=\left(1-\o(\zeta+\ln p)\right)^{-\f{1}{p}}$. In this case, \eqref{4-45-1} becomes
\beq\label{4-45-2}
F_2(s,\la,\o)\triangleq \mu(1+e^\zeta)-\mu
e^{\zeta-\o^{-1}}+\f{pe^{\zeta}(1-e^{-\o^{-1}})}{\o^{\f{1}{p}}}r_0(\o^\f{1}{p}\mu)-\la.
\eeq
It is obvious that for $c> 1$, $F_2(0,c,\ln(c-1))=0$. By direct computation, we have that
\beq
\p_\o\mu=\f{\zeta+\ln p}{p}(1-\o(\zeta+\ln p))^{-\f{1}{p}-1},\
\p_\zeta\mu=\f{\o}{p}(1-\o(\zeta+\ln p))^{-\f{1}{p}-1}\no
\eeq
and
\ben\label{4-46-2}
\p_\o F_2(\o,\la,\zeta)&=&\left(1+e^\zeta-e^{\zeta-\o^{-1}}\right)\p_\o\mu
+s^{-2}\mu e^{\zeta-\o^{-1}}\no\\
&&+\f{e^\zeta}{\o^{\f{1}{p}+1}}\left(e^{-\o^{-1}}(1-p\o^{-1})-1\right)r_0(\o^\f{1}{p}\mu)
+\f{e^{\zeta}(1-e^{-\o^{-1}})}{\o^\f{1}{p}}r'_0(\o^\f{1}{p}\mu)
\left(\o^{\f{1}{p}-1}\mu+\o^\f{1}{p}\p_\o\mu\right),\no\\
\p_\la F_2(\o,\la,\zeta)&=&-1,\no\\
\p_\zeta F_2(\o,\la,\zeta)&=&\left(1+e^\zeta-e^{\zeta-\o^{-1}}\right)\p_\zeta\mu
+\mu e^\zeta(1-e^{-\o^{-1}})+
\f{pe^{\zeta}(1-e^{-\o^{-1}})}{\o^\f{1}{p}}(r_0(\o^\f{1}{p}\mu)+r'_0(\o^\f{1}{p}\mu)\p_\zeta\mu).\no
\een
This yields
\beq
\p_s F_1(0,c,\ln(c-1))=\f{c(\ln p+\ln(c-1))}{p},\ \p_\la F_1(0,c,\ln(c-1))=-1,\ \p_\zeta F_1(0,c,\ln(c-1))=c-1.
\eeq
By the implicit function theorem, there exists a unique solution $\zeta(s,\la)$ satisfying $\zeta(0,c)=\ln(c-1)$ and $F_2(s,\la,\zeta(s,\la))=0$ with
\beq\label{4-47-2}
\zeta=\ln(c-1)-\f{c\left(\ln p+\ln(c-1)\right)}{p(c-1)}s+\f{\la-c}{c-1}+O(s+\la^2).
\eeq
Thus
\ben\label{4-48-2}
\mu&=&(1-s^p(\zeta+\ln p))^{-\f{1}{p}}\no\\
&=&1+\f{\ln(c-1)+\ln p}{p}s^p+\f{s^p(\la-c)}{p(c-1)}+O(s^{p+1}+s^{p}(\la-c)^2).
\een
Together with \eqref{4-48-1} and \eqref{4-48-2}, this derives \eqref{4-43-1}.

\bigskip

\noindent(2) For $0<c<1$, define

\beq
G(s,\la,\mu)\triangleq\mu(1+\f{1}{p}e^{s^{-p}(1-|\mu|^{-p})})-\f{\mu}{p}e^{-s^{-p}|\mu|^{-p}}
+\f{e^{-|\mu|^{-p}s^{-p}}(e^{s^{-p}}-1)}{s}r_0(s\mu)-\la=0.
\eeq
It is clear that $G(0,c,c)=0$ and
\ben\label{4-49}
\p_s G(s,\la,\mu)&=&s^{-p-1}\left(\mu+\f{pr_0(s\mu)}{s}\right)
\left(e^{s^{-p}(1-\mu^{-p})}(\mu^{-p}-1)-e^{-s^{-p}\mu^{-p}}\mu^{-p}\right)\no\\
&&+\f{s\mu r'_0(s\mu)-r_0(s\mu)}{s^2}
\left(e^{s^{-p}(1-\mu^{-p})}-e^{-s^{-p}\mu^{-p}}\right),\no\\
\p_\la G(s,\la,\mu)&=&-1,\no\\
\p_\mu G(s,\la,\mu)&=&1+s^{-p}\mu^{-p-1}\left(\mu+\f{pr_0(s\mu)}{s}\right)
\left(e^{s^{-p}(1-\mu^{-p})}(1-\mu^{-p})-e^{-s^{-p}\mu^{-p}}\mu^{-p}\right)\no\\
&&+\left(\f{1}{p}+r'_0(s\mu)\right)
\left(e^{s^{-p}(1-\mu^{-p})}-e^{-s^{-p}\mu^{-p}}\right).\no
\een
Then in light of $c<1$, it follows that
\ben\label{4-49-1}
\p_s G(0,c,c)&=&0,\no\\
\p_\la G(0,c,c)&=&-1,\no\\
\p_\mu G(0,c,c)&=&1.\no\een
Thus by the implicit function theorem, there exists a unique solution $\mu=\mu(s,\la)$ satisfying that $\mu(0,c)=c$ and
\beq\label{4-50}
\mu(s,\la)=c+(\la-c)+O_c(s^2+|\la-c|^2).
\eeq
Therefore, by $y=s\mu$, we finish the proof of \eqref{4-43-2}.

%
\end{proof}

\bigskip

\noindent {\it Proof of Theorem \ref{main result2}:}

\noindent (1) It can be obtained by Lemma \ref{lem 4-3}.

\noindent (2) Since we don't get the behavior of $y(t,x)$ for $c=\pm1$ in Lemma \ref{lem 4-6}, we have to choose the domain $\O_{-,t,+}^0$ and $\O_{-,t,-}^0$ as follows
\ben
\label{4-54} \O_{t,-,+}^0&=&\{(t,x):\ 0<s<\ve_0,\ 1-\dl_0<\f{x}{s\tau}<1+\dl_0\},\\
\label{4-55} \O_{t,-,-}^0&=&\{(t,x):\ 0<s<\ve_0,\ -1-\dl_0<\f{x}{s\tau}<-1+\dl_0\},
\een
where $\tau=1-t$, $s=|\ln \tau|^{-\f{1}{p}}$ and $\ve_0$, $\dl_0>0$ sufficiently small.

We only consider the behavior in $\O_{-,t,+}^0$ since the treatment
in $\O_{-,t,-}^0$ is similar. By monotonicity of $y(t,\cdot)$ for each fixed $t\in[0,1]$, we know that
for $(t,x)\in\O_{-,t,+}^0$,
\beq
\label{4-56}
y(t,(1-\dl_0)s\tau)\leq y(t,x)\leq y(t,(1+\dl_0)s\tau).
\eeq

Let's firstly turn to consider $y(t,(1+\dl_0)s\tau)$. Note that $\mu=\f{y(t,(1+\dl_0)s\tau)}{s}$ satisfies
\beq
\label{4-57}
1+\dl_0=\mu\left(1+\f{1}{p}e^{s^{-p}(1-\mu^{-p})}\right)
-\f{\mu}{p}e^{-s^{-p}\mu^{-p}}+\f{e^{s^{-p}}-1}{s}e^{-s^{-p}\mu^{-p}}r_0(s\mu).
\eeq

For $p\ge1$, we let
\beq
\label{4-58}
\zeta=s^{-p}(1-\mu^{-p})-\ln p,
\eeq
and then $\mu=\left(1-s^p(\zeta+\ln p)\right)^{-\f{1}{p}}$.
In this case, \eqref{4-57} becomes
\beq
\label{4-59}
J_1(s,\zeta)\triangleq\mu\left(1+e^\zeta\right)-\mu e^{\zeta-s^{-p}}
+\f{pe^\zeta (1-e^{-s^{-p}})}{s}r_0(s\mu)-(1+\dl_0)=0.
\eeq
Note that if $s\to 0+$, then $\zeta\to(\ln{\dl_0})+$ and $\mu\to1+$.
In addition,
\beq\label{4-60}
\p_s\mu=s^{p-1}(\zeta+\ln p)\left(1-s^p(\zeta+\ln p)\right)^{-\f{1}{p}-1},\
\p_\zeta\mu=\f{s^p}{p}\left(1-s^p(\zeta+\ln p)\right)^{-\f{1}{p}-1},
\eeq
and then by taking $s\to0+$, we have
\beq
\label{4-61}
\p_s\mu(0,\ln\dl_0)=\dl_1^p\left(\ln\dl_0+\ln p\right),\ \p_\zeta\mu(0,\ln\dl_0)=0.
\eeq
On the other hand, it follows from direct computation that
\ben
\p_s J_1(s,\zeta)&=&(1+e^\zeta-e^{\zeta-s^{-p}})\p_s\mu
-ps^{-p-1}\mu e^{\zeta-s^{-p}}\no\\
&&-\f{pe^\zeta\left(e^{-s^{-p}}(ps^{-p}-1)+1\right)}{s^2}r_0(s\mu)
+\f{pe^\zeta(e^{-s^{-p}}-1)}{s}r'_0(s\mu)\left(\mu+s\p_s\mu\right),\no\\
\p_\zeta J_1(s,\zeta)&=&\mu e^\zeta(1-e^{-s^{-p}})+(1+e^\zeta-e^{\zeta-s^{-p}})\p_\zeta\mu
+\f{pe^\zeta(e^{-s^{-p}}-1)}{s}\left(r_0(s\mu)+s\p_\zeta\mu r'_0(s\mu)\right).\no
\een
Together with \eqref{0-8}, this yields
\beq\label{4-62}
\p_s J_1(0,\ln\dl_0)=(1+\dl_0)(\ln\dl_0+\ln p)+\f{p\dl_0}{2}r''_0(0),\ \p_\zeta J_1(0,\ln\dl_0)=1.\eeq
Thus by the implicit function theorem, there exists a function $\zeta(s)$ satisfying $J(s,\zeta(s))=0$ such that
\beq\label{4-64}
\zeta(s)=\ln\dl_0-\left((1+\dl_0)(\ln\dl_0+\ln p)+\f{p\dl_0}{2}r''_0(0)\right)s+O_{\dl_0}(s^2),\quad s\in(0,\ve_0],
\eeq
where $\ve_0=\ve_0(\dl_0)>0$ is small enough. Therefore
\beq\label{4-65}
\mu(s)=\left(1-s^p(\zeta+\ln p)\right)^{-\f{1}{p}}=1+\f{\ln\dl_0+\ln p}{p}s^p+O_{\dl_0}(s^{p+1}),\quad s\in(0,\ve_0].
\eeq

For $p\in(0,1)$, we take $\o=s^p$ and then \eqref{4-59} becomes
\beq\label{4-66}
J_2(\o,\zeta)\triangleq\mu\left(1+e^\zeta\right)-\mu e^{\zeta-\o^{-1}}
+\f{pe^\zeta(1-e^{-\o^{-1}})}{\o^{\f{1}{p}}}r_0(\o^{\f{1}{p}}\mu)-(1+\dl_0)=0,
\eeq
where $\mu=\left(1-\o(\zeta+\ln p)\right)^{-\f{1}{p}}$. By direct computation, one has
\ben
\p_\o J_2(\o,\zeta)&=&(1+e^\zeta-e^{\zeta-\o^{-1}})\p_\o\mu
-\o^{-2}\mu e^{\zeta-\o^{-1}}\no\\
&&-\f{e^\zeta\left(e^{-\o^{-1}}(p\o^{-1}-1)+1\right)}{\o^{\f{1}{p}+1}}r_0(\o^\f{1}{p}\mu)
+\f{pe^\zeta(e^{-\o^{-1}}-1)}{\o^\f{1}{p}}r'_0(\o^\f{1}{p}\mu)
\left(\f{1}{p}\o^{\f{1}{p}-1}\mu+\o\p_\o\mu\right),\no\\
\p_\zeta J_2(\o,\zeta)&=&\mu e^\zeta(1-e^{-\o^{-1}})+(1+e^\zeta-e^{\zeta-\o^{-1}})\p_\zeta\mu
+\f{pe^\zeta(e^{-\o^{-1}}-1)}{\o^\f{1}{p}}\left(r_0(\o^\f{1}{p}\mu)+\o^\f{1}{p}\p_\zeta\mu r'_0(\o^\f{1}{p}\mu)\right),\no
\een
and
\beq\label{4-67}
\p_\o\mu=\f{1}{p}(\zeta+\ln p)\left(1-\o(\zeta+\ln p)\right)^{-\f{1}{p}-1},\
\p_\zeta\mu=\f{1}{p}\th\left(1-\o(\zeta+\ln p)\right)^{-\f{1}{p}-1}.
\eeq
This yields
\beq\label{4-68}
\p_\o\mu(0,\ln\dl_0)=\f{\ln\dl_0+\ln p}{p},\
\p_\zeta\mu(0,\ln\dl_0)=0,
\eeq
and then
\beq\label{4-69}
\p_\o J_2(0,\ln\dl_0)=\f{(1+\dl_0)(\ln\dl_0+\ln p)}{p},\
\p_\zeta J_2(0,\ln\dl_0)=\ln\dl_0.
\eeq
So we can obtain
\beq\label{4-70}
\zeta(\o)=\ln\dl_0-\f{(1+\dl_0)(\ln\dl_0+\ln p)}{p\ln\dl_0}\o+O_{\dl_0}(\o^2),
\eeq
and then
\beq\label{4-71}
\mu=\left(1-\o(\zeta+\ln p)\right)^{-\f{1}{p}}=1+\f{\ln\dl_0+\ln p}{p}s^p+O_{\dl_0}(s^{2p}).
\eeq
By \eqref{4-65} and \eqref{4-71}, we have that for $p>0$ and $0<s<\ve_0$,
\beq\label{4-72}
\mu(s)=1+\f{\ln\dl_0+\ln p}{p}s^p+O_{\dl_0}(s^{\min\{2p,p+1\}}),
\eeq
where $\ve_{0}>0$ is a small constant depending on $\dl_0$. It follows that for $s\in[0,\ve_0)$,
\beq\label{4-73}
y(t,(1+\dl_0)s\tau)=s\mu(s)=s+\f{\ln\dl_0+\ln p}{p}s^{p+1}+O_{\dl_0}(s^{\min\{2p+1,p+2\}}).
\eeq

Secondly, we consider the behavior of $y(t,(1-\dl_0)s\tau)$. For $x=(1-\dl_0)s\tau$, $\mu=\f{y(t,(1-\dl_0)s\tau)}{s}$ satisfies
\beq
\label{4-74}
L(s,\mu)\triangleq\mu\left(1+\f{1}{p}e^{s^{-p}(1-\mu^{-p})}\right)
-\f{\mu}{p}e^{-s^{-p}\mu^{-p}}+\f{e^{s^{-p}}-1}{s}e^{-s^{-p}\mu^{-p}}r_0(s\mu)-(1-\dl_0)=0.
\eeq
It is easy to know $\mu=1-\dl_0$ as $s\to0_+$. Furthermore, by direct computation, one has
\ben
\p_s L(s,\mu)&=& s^{-p-1}\left(\mu+\f{p}{s}r_0(s\mu)\right)
\left(e^{s^{-p}(1-\mu^{-p})}(1-\mu^{-p})-e^{-s^{-p}\mu^{-p}}\mu^{-p}\right)\no\\
&&+\f{e^{s^{-p}(1-\mu^{-p})}-e^{-s^{-p}\mu^{-p}}}{s^2}\left(-r_0(s\mu)+s \mu r'_0(s\mu)\right),\no\\
\p_\mu L(s,\mu)&=&\left(1+\f{1}{p}e^{s^{-p}(1-\mu^{-p})}
-\f{1}{p}e^{-s^{-p}\mu^{-p}}\right)+s^{-p}\mu^{-p}\left(e^{s^{-p}}-1\right)e^{-s^{-p}\mu^{-p}}\no\\
&&+\left(e^{s^{-p}}-1\right)e^{-s^{-p}\mu^{-p}}
\left(ps^{-p-1}\mu^{-p-1}r_0(s\mu)+r'_0(s\mu)\right).\no
\een
By assumption \eqref{0-8}, we have
\beq\label{4-75}
\p_s L(0,1-\dl_0)=0,\quad \p_\mu L(0,1-\dl_0)=1.
\eeq
Then there exists a function
\beq\label{4-76}
\mu(s)=1-\dl_0+O_{\dl_0}(s^2)
\eeq
such that $L(s,\mu(s))=0$ for $s>0$ sufficiently small and dependent on $\dl_0$. It follows that
for $s\in[0,\ve_0)$,
\beq\label{4-77}
y(t,(1-\dl_0)s\tau)=s\mu(s)=(1-\dl_0)s+O_{\dl_0}(s^3).
\eeq
Thus for small fixed $\dl_0>0$, we have that for $s\in[0,\ve_0)$ and $x\in((1-\dl_0)s\tau,(1+\dl_0)s\tau)$,
\beq\label{4-78}
\f{1}{2}s\le y(t,x)\le\f{3}{2}s.
\eeq
Recalling $u(t,x)=u_0(y(t,x))$, we have that
\beq\label{4-79}
|u(t,x)-u(1,0)|\lesssim|y(t,x)|\lesssim s=|\ln(1-t)|^{-\f{1}{p}},
\eeq
and by
\beq\label{4-80}
1+tg'(y(t,x))=(1-t)+\left(\f{1}{p}e^{-|y|^{-p}}+|y|^{-p}e^{-|y|^{-p}}+r'_0(y)\right)
\gtrsim|\ln(1-t)|(1-t),
\eeq
we have that for $(t,x)\in\O_{t,-,+}^0$,
\ben
\label{4-81}\f{\p u}{\p x}(t,x)&=&\f{u'_0(y(t,x))}{1+tg'(y(t,x))}\lesssim|\ln(1-t)|^{-1}(1-t)^{-1},\\
\label{4-82}\f{\p u}{\p t}(t,x)&=&-\f{u'_0(y(t,x))g(y(t,x))}{1+tg'(y(t,x))}\lesssim|\ln(1-t)|^{-1-\f{1}{p}}(1-t)^{-1}.
\een

We decompose the neighbourhood $B$ of $(1,0)$ into $\O_{x,+},\ \O_{x,-},\ \O^{j}_{t,+},\ \O^{j}_{t,-,m}$ for $j=1,2,\ldots,N$ and $\O^{j}_{t,-,+},\ \O^{j}_{t,-,-}$ for $j=0,1,2,\ldots,N$ as follows (see Figure 2 below).

\begin{figure}[h]
\centering
\includegraphics[scale=0.25]{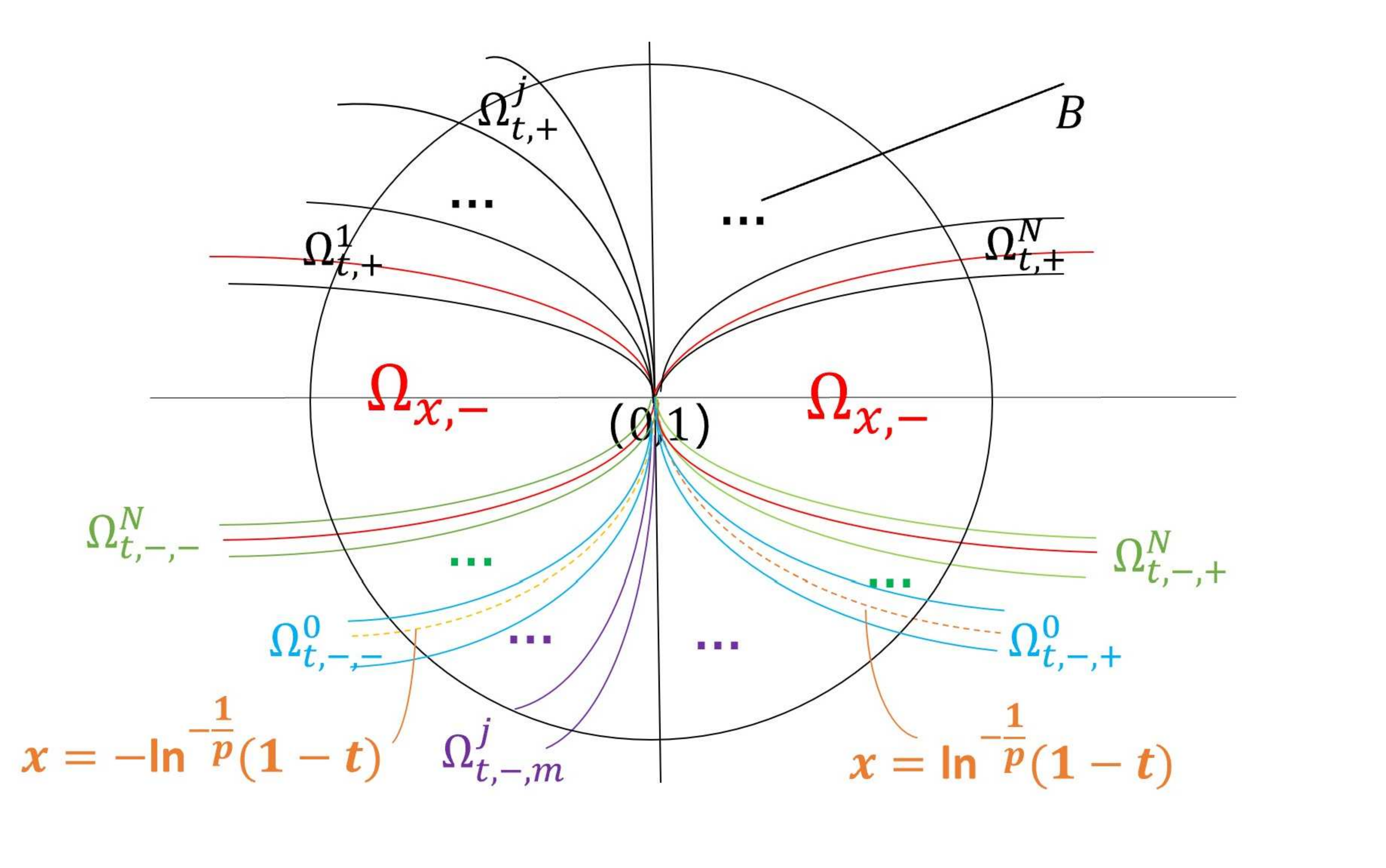}
\caption{\bf Decomposition of $B$}
\end{figure}

\ben
\label{4-83}\O_{x,+}&=&\{(t,x):\ x>0,\ 0<\xi<\dl,\ -\ve<\eta<\ve\},\\
\label{4-84}\O_{x,-}&=&\{(t,x):\ x<0,\ 0<\xi<\dl,\ -\ve<\eta<\ve\},
\een
where $(\xi,\eta)$ are defined in \eqref{4-33}. Taking the suitable constants $c_{1,+}<c_{2,+}<\ldots<c_{N-1,+}<c_{N,+}$, $\{\ve_{j,+}\}_{j=1}^N$ and $\{\dl_{j,+}\}_{j=1}^N$, and setting
\beq\label{4-85}
\O^{j}_{t,+}=\{(t,x):\ 0<s<\ve_{j,+},\ c_{j,+}-\dl_{j,+}<\la<c_{j,+}+\dl_{j,+}\}, s=|\ln(t-1)|^{-\f 1 p},\ \la=\f{x}{s(t-1)},
\eeq
where $B\cap\{t\ge0\}\subset \left(\cup_{j=1}^N\O^{j}_{t,+}\right)\cup\O_{x,+}\cup\O_{x,+}$.
By choosing small $\dl_0>0$, we can define $\O^{0}_{t,-,+}$ and $\O^{0}_{t,-,-}$ as in \eqref{4-54} and \eqref{4-56}.
Meanwhile, taking some suitable constants $\{c_{1,-,m}\}_{j=1}^N$, $\{\ve_{j,-,m}\}_{j=1}^N$ and $\{\dl_{j,-,m}\}_{j=1}^N$ where $m=+,0,-$,
and setting
\beq\label{4-86}
\O^{j}_{t,-,m}=\{(t,x):\ 0<s<\ve_{j,-,m},\ c_{j,-,m}-\dl_{j,-,m}<\la<c_{j,-,m}+\dl_{j,-,m}\}, s=|\ln(1-t)|^{-\f 1 p},\ \la=\f{x}{s(1-t)},
\eeq
where $c_{N,-,-}<\ldots<c_{1,-,-}<-1<c_{1,-,0}<c_{1,-,0}<\ldots<c_{N,-,0}<1<c_{1,-,+}<\ldots<c_{N,-,+}$, such
that $B\cap\{t\le0\}\subset \left(\cup_{j=0}^N\O^{j}_{t,-,+}\right)\cup
\left(\cup_{j=1}^N\O^{j}_{t,-,0}\right)\cup\left(\cup_{j=0}^N\O^{j}_{t,-,-}\right)\cup
\O_{x,+}\cup\O_{x,+}$ holds. Note that in $\O_{x,\pm}$, $y(t,x)\sim\xi$; and in others, $y(t,x)\sim s$.
Then similarly to \eqref{4-79}, we can obtain \eqref{2.1}. On the other hand,
in $\O_{x,\pm}$, $1+tg'(y(t,x))\gtrsim|x||\ln|x||$; and in others, $1+tg'(y(t,x))\gtrsim |t-1||\ln|t-1||$ as in \eqref{4-80}.
Then similarly to the proof for \eqref{4-81} and \eqref{4-82}, we can establish \eqref{2.2} and \eqref{2.3}.
\rightline{$\square$}

\end{document}